\numberwithin{equation}{section}
\newtheorem{thm}{Theorem}[section]
\newtheorem{lem}[thm]{Lemma}
\newtheorem{prop}[thm]{Proposition}
\newtheorem{defn}[thm]{Definition}
\theoremstyle{definition}
\newtheorem{rem}[thm]{Remark}
\newcommand\R{{\mathbb R}}
\newcommand\C{{\mathbb C}}
\newcommand\Tma{T_{\mathrm{max}}}
\newcommand\Comp{{\mathrm{c}}}
\newcommand\DI{\varphi  }
\newcommand\DIb{\psi }
\newcommand\dist{{\mathrm{d}}}
\newcommand\Yset{{\mathcal Y}}
\newcommand\Admis{{\mathcal A}}
\newcommand\CSob{A}
\newcommand\CStr{K}
\newcommand\CSTu{C_1}
\newcommand\CSTub{C_2}
\newcommand\CHo{B}
\newcommand\CHov{\Gamma }
\newcommand\Rhostar{\nu }
\newcommand\Twostar{\mu }
\newcommand\Betastar{\theta }
\newcommand\Step[1]{\medskip \noindent {\sc Step~#1.}\quad}
\newcommand\goto{\mathop{\longrightarrow}}
\newcommand\Srn{{\mathcal S}(\R^N )}
\newcommand\union{\mathop{\cup}}
\newcommand\MScN[1]{\href{http://www.ams.org/mathscinet-getitem?mr=#1}{\nolinkurl{(#1)}}}
\newcommand\DOI[1]{\href{http://dx.doi.org/#1}{(doi: \nolinkurl{#1})}}
\newcommand\LINK[1]{\href{#1}{(link: \nolinkurl{#1})}}
\title[the $H^2$-critical nonlinear Schr\"o\-din\-ger equation]{Local well-posedness for the $H^2$-critical nonlinear Schr\"o\-din\-ger equation}
\author[Thierry Cazenave]{Thierry Cazenave$^1$}
\author[Daoyuan Fang]{Daoyuan Fang$^{\dag,2}$}
\author[Zheng Han]{Zheng Han$^{\dag,2}$}
\address{$^1$Universit\'e Pierre et Marie Curie \& CNRS, Laboratoire Jacques-Louis Lions,
B.C. 187, 4 place Jussieu, 75252 Paris Cedex 05, France}
\address{$^{2}$Department of Mathematics, Zhejiang University, Hangzhou, 310027,
China}
\email[Thierry Cazenave]{\href{mailto:thierry.cazenave@upmc.fr}{thierry.cazenave@upmc.fr}}
\urladdr[Thierry Cazenave]{\href{http://www.ljll.math.upmc.fr/~cazenave/}{http://www.ljll.math.upmc.fr/~cazenave/}}
\email[Daoyuan Fang]{\href{mailto:dyf@zju.edu.cn}{dyf@zju.edu.cn}}
\email[Zheng Han]{\href{mailto:hanzheng5400@yahoo.com.cn}{hanzheng5400@yahoo.com.cn}}
\thanks{$^\dag$Research supported by grants 11271322 and 10931007 of the NSFC, China}
\subjclass[2010] {primary 35Q55; secondary 35B30}
\keywords{$H^2$-critical nonlinear Schr\"odinger equation; local existence; 
continuous dependence, unconditional uniqueness}
\begin{document}

\maketitle

\begin{abstract}
In this paper, we consider the nonlinear Schr\"o\-din\-ger equation $iu_t +\Delta u= \lambda  |u|^{\frac {4} {N-4}} u$ in $\R^N $, $N\ge 5$, with $\lambda \in \C$. 
We prove local well-posedness (local existence, unconditional uniqueness, continuous dependence) in the critical space $\dot H^2 (\R^N ) $. 
\end{abstract}

\section{Introduction}

Throughout this paper, we assume  $N\ge 5$ and
consider the $H^2$-critical nonlinear Schr\"o\-din\-ger equation 
\begin{equation} \label{NLS} \tag{NLS} 
\begin{cases} 
iu_t + \Delta u= \lambda  |u|^\alpha u,\\
u(0)= \DI,
\end{cases} 
\end{equation} 
in $\R^N $, where $\lambda \in \C$ and
\begin{equation} \label{fAl1} 
\alpha = \frac {4} {N-4}.
\end{equation} 
It is often convenient to study the equivalent form equation~\eqref{NLS}
\begin{equation} \label{NLSI} 
u(t) = e^{it \Delta }\DI - i \lambda \int _0^t e^{i(t-s) \Delta }   |u (s) |^\alpha u(s) \,ds,
\end{equation} 
where $(e^{it \Delta }) _{ t\in \R }$ is the Schr\"o\-din\-ger group. (See, e.g., Lemma~1.1 in~\cite{Kato2}.)

Local existence for the Cauchy problem~\eqref{NLS} is well known in the Sobolev space $H^s (\R^N ) $ provided $\alpha <  \frac {4} {N-2s}$ and (if $s>1$) that the nonlinearity is sufficiently smooth.  See Kato~\cite{Kato2}, Tsutsumi~\cite{Tsutsumi},
Cazenave and Weissler~\cite{CazenaveW}, Kato~\cite{Kato1}.
The smoothness condition on the nonlinearity can be improved (removed, if $s\le 2$) by estimating time derivatives of the solution instead of space derivatives. 
See Kato~\cite{Kato3}, Pecher~\cite{Pecher}, Fang and Han~\cite{FangH}. 
The solution depends continuously on the initial value $H^s \to C([0,T], H^s)$, see Kato~\cite{Kato2}, Tsutsumi~\cite{Tsutsumi}, Cazenave, Fang and Han~\cite{CazenaveFH},
Dai, Yang  and Cao~\cite{DaiYC}, Fang and Han~\cite{FangH}. 
Unconditional uniqueness (i.e., uniqueness in $C([0,T], H^s)$ or $L^\infty ((0,T), H^s)$, without assuming the solution belongs to some auxiliary space) is known in a number of cases, see Kato~\cite{Kato1}, Furioli and Terraneo~\cite{FurioliT},  Rogers~\cite{Rogers}, 
Fang and Han~\cite{FangH2}. Many of these results hold in the critical case $\alpha =\frac {4} {N-2s}$, see Cazenave and Weissler~\cite{CazenaveW}, Kato~\cite{Kato1}, Cazenave~\cite{CLN},  Kenig and Merle~\cite{KenigM},  Tao and Visan~\cite{TaoV}, Killip and Visan~\cite{KillipV}, 
Cazenave, Fang and Han~\cite{CazenaveFH}, Win and Tsutsumi~\cite{WinT}, Fang and Han~\cite{FangH2}.

Our main result concerns the  $H^2$-critical case~\eqref{fAl1}, and is the following. 
(The homogeneous Sobolev space $\dot H^2 (\R^N )$ as well as the admissible pairs are defined in Section~\ref{sPrelim} below.)

\begin{thm} \label{eComplete} 
Suppose $N\ge 5$, $\lambda \in \C$ and $\alpha $ is given by~\eqref{fAl1}.
Given any $\DI \in  \dot H^2 (\R^N ) $, there exist a maximal existence time $\Tma = \Tma (\DI )>0$ and a unique solution $u\in  C([0, \Tma ), \dot H^2 (\R^N ) ) $ of~\eqref{NLS}.
If, in addition, $\DI\in L^2 (\R^N ) $  then $u\in  C([0, \Tma ),  H^2 (\R^N ) ) $.
Moreover, the following properties hold.
\begin{enumerate}[{\rm (i)}] 

\item \label{eComplete:2} 
$\Delta u \in L^q ((0,T), L^r (\R^N )) $ and $u_t\in   L^q ((0,T), L^r (\R^N ) ) \cap C ([0, T] ), L^2 (\R^N ) )$ for every $T<\Tma$ and every admissible pair $(q,r)$.

\item \label{eComplete:1} 
$u - e^{i \cdot \Delta }\DI \in   L^q ((0,T), L^r (\R^N ) ) \cap C ([0,  T], L^2 (\R^N ) )$ for every $T<\Tma$ and every admissible pair $(q,r)$.

\item \label{eComplete:3} If $ \| \Delta \DI \| _{ L^2 }$ is sufficiently small, then $\Tma =\infty $ and 
both $u_t$ and $\Delta u$ belong to $L^q ((0,\infty ), L^r (\R^N ) )$ for every admissible pair $(q,r)$. If, in addition, $\DI \in L^2 (\R^N ) $, then also $u\in L^q ((0,\infty ), L^r (\R^N ) )$. 

\item \label{eComplete:4} (Blowup alternative.)
If $\Tma <\infty $, then $ \| u \| _{ L^\gamma ((0, \Tma), L^\Rhostar  )}  = \infty $, where $\gamma = \frac {2(N-2)} {N-4}, \Rhostar=  \frac {2N (N-2)} {(N-4)^2}$.

\item \label{eComplete:5} (Continuous dependence.)
Let $\DI \in  \dot H^2 (\R^N ) $, $(\DI^n) _{ n\ge 1 } \subset \dot H^2 (\R^N )$, let $u$ and $(u^n) _{ n\ge 1 }$ be the corresponding solutions of~\eqref{NLS} and let $\Tma $ and $(\Tma^n) _{ n\ge 1 }$ denote their respective maximal existence times. 
Suppose $\DI^n \to \DI$ in $\dot H^2 (\R^N ) $ as $n\to \infty $. If $0<T< \Tma$, then $\Tma^n >T$ for all sufficiently large $n$. Moreover, $\Delta u^n \to \Delta u$  and $u^n_t \to u_t$ in $L^q ((0,T), L^r (\R^N )) $ as $n\to \infty $, for every admissible pair $(q,r)$. 
If, in addition, $\DI\in L^2 (\R^N ) $, $(\DI^n) _{ n\ge 1 }\subset L^2 (\R^N ) $ and $\DI^n \to \DI$ in $L^2 (\R^N ) $, then $u^n \to u$ in $L^q ((0,T), L^r (\R^N )) $. 

\end{enumerate} 
\end{thm}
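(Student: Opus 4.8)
The plan is to construct the solution by a contraction argument, but carried out at the level of the time derivative $u_t$ rather than of the Laplacian $\Delta u$, following the method of Kato and of Fang--Han. The point is that when $N\ge 9$ the exponent $\alpha=\frac4{N-4}$ is less than $1$, so $f(u):=|u|^\alpha u$ is only of class $C^1$ and $\Delta f(u)$ (which would involve $f''$) cannot be estimated; by contrast $\partial_t f(u)=\partial_u f(u)\,u_t+\partial_{\bar u}f(u)\,\overline{u_t}$ is \emph{linear} in $u_t$, with coefficients bounded by $|u|^\alpha$, and only $f'$ enters. Since $\alpha+1=\frac N{N-4}$, the Sobolev embedding $\dot H^2(\R^N)\hookrightarrow L^{2N/(N-4)}(\R^N)$ shows that $f(\DI)\in L^2$ whenever $\DI\in\dot H^2$; together with $\Delta\DI\in L^2$ this makes $u_t(0)=i\Delta\DI-i\lambda f(\DI)\in L^2$ a legitimate datum for the equation $iw_t+\Delta w=\lambda\,\partial_t f(u)$ satisfied by $w=u_t$.

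\medskip
For the existence part, fix the admissible pair $(\gamma,q_0)$ with $\gamma=\frac{2(N-2)}{N-4}$; one checks that it is indeed admissible and that $\dot W^{2,q_0}(\R^N)\hookrightarrow L^{\Rhostar}(\R^N)$, so that the Strichartz estimates give $\|e^{it\Delta}\DI\|_{L^\gamma((0,\infty),L^{\Rhostar})}\le C\|\Delta\DI\|_{L^2}$ and, in particular, $\|e^{it\Delta}\DI\|_{L^\gamma((0,T),L^{\Rhostar})}\to 0$ as $T\to 0$. For $T>0$ and appropriately chosen radii, I would run a fixed point for $\Phi(v)(t)=e^{it\Delta}\DI-i\lambda\int_0^t e^{i(t-s)\Delta}f(v(s))\,ds$ in a complete metric space of functions $v\in C([0,T],\dot H^2)$ for which $v-e^{i\cdot\Delta}\DI$, $\Delta v$ and $v_t$ lie in all Strichartz spaces on $(0,T)$, with the distance measured in the Strichartz norms of $v$ and of $v_t$, and with $\|v\|_{L^\gamma((0,T),L^{\Rhostar})}$ constrained to be small. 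One uses that $(\Phi v)_t$ solves $iw_t+\Delta w=\lambda\,\partial_t f(v)$ with $w(0)=i\Delta\DI-i\lambda f(\DI)$, and one recovers $\Delta\Phi(v)=\lambda f(v)-i(\Phi v)_t$ from equation~\eqref{NLS}. By the Strichartz estimates and H\"older's inequality, everything reduces to bilinear bounds of the type
\[
\bigl\||v|^\alpha v_t\bigr\|_{L^{\gamma'}((0,T),L^{q_0'})}\le C\,\|v\|_{L^\gamma((0,T),L^{\Rhostar})}^{\alpha}\,\|v_t\|_{L^{\gamma}((0,T),L^{q_0})},
\]
together with the analogous estimate with $v_t$ replaced by $v$ and companion bounds for $\|f(v)\|_{L^\gamma((0,T),L^{q_0})}$ (for the latter one also uses $\dot H^2\hookrightarrow L^{2N/(N-4)}$, and a second admissible pair when $N\le 8$); the decisive gain is the factor $\|v\|_{L^\gamma L^{\Rhostar}}^{\alpha}$, which is made $<1$ by taking $T$ small for an arbitrary datum, or by assuming $\|\Delta\DI\|_{L^2}$ small (so that $\|e^{i\cdot\Delta}\DI\|_{L^\gamma((0,\infty),L^{\Rhostar})}$ is small) for the global statement. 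This produces a unique solution on a maximal interval $[0,\Tma)$ with the integrability stated in (i)--(ii); the statement for $\DI\in L^2$ follows by feeding the already controlled $\|\Delta u\|$ into a Strichartz estimate for $u$ in $L^2$-based spaces together with a Gronwall argument, and the blow-up alternative is a standard continuation argument, using that the existence time produced by the fixed point is bounded below in terms of $\|e^{i\cdot\Delta}\DI\|_{L^\gamma((0,T),L^{\Rhostar})}$ alone and that, through~\eqref{NLSI}, finiteness of $\|u\|_{L^\gamma((0,\Tma),L^{\Rhostar})}$ would keep $\|\Delta u\|_{L^\infty_t L^2}$ bounded and the critical norm small near $\Tma$.

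\medskip
For uniqueness, uniqueness within the class of (i)--(ii) is immediate from the same bilinear estimates; the genuine difficulty, which I expect to be the main obstacle, is \emph{unconditional} uniqueness in $C([0,T],\dot H^2)$. The strategy is to show that any $u\in C([0,T],\dot H^2)$ solving~\eqref{NLSI} must in fact belong to $L^\gamma((0,\tau),L^{\Rhostar})$ for some $\tau>0$, after which the conditional uniqueness applies on $[0,\tau]$ and a continuation argument finishes the proof. This step is delicate precisely because the critical quantity $\|u\|_{L^\gamma((0,T),L^{\Rhostar})}$ is not small on short intervals, so one cannot simply iterate; one has to exploit that $u$, $\Delta u$ and $u_t$ all lie in $L^\infty((0,T),L^2)$ --- the latter two because $\Delta\DI$ and $u_t(0)=i\Delta\DI-i\lambda f(\DI)$ lie in $L^2$, and $f(u)\in L^\infty((0,T),L^2)$ by $\dot H^2\hookrightarrow L^{2N/(N-4)}$ --- and combine this with Strichartz estimates in $L^2$-based spaces (which require no derivative of $f$) in a careful bootstrap over a short time interval.

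\medskip
Finally, continuous dependence. The fixed-point Lipschitz estimate does not apply directly on a fixed $[0,T]\subset[0,\Tma)$, since $\|u\|_{L^\gamma((0,T),L^{\Rhostar})}$ need not be small there; instead I would partition $[0,T]$ into finitely many subintervals on each of which the relevant Strichartz norm of $u$ is small, use that $\DI^n\to\DI$ in $\dot H^2$ forces $u^n_t(0)\to u_t(0)$ in $L^2$, and propagate the local stability estimate from one subinterval to the next. This simultaneously yields $\Tma^n>T$ for all large $n$ and the convergences $\Delta u^n\to\Delta u$ and $u^n_t\to u_t$ in every Strichartz space on $(0,T)$, as well as $u^n\to u$ in every such space when, in addition, the data converge in $L^2$.
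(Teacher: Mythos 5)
Your overall strategy---estimating time derivatives of the nonlinearity rather than space derivatives, running a fixed point in which the critical norm $\|v\|_{L^\gamma((0,T),L^{\Rhostar})}$ is made small, and iterating a local stability estimate to get continuous dependence on all of $[0,T]$---is the strategy of the paper. But there is a genuine gap in your contraction setup: you propose to measure the distance on the fixed-point set by the Strichartz norms of $v$ \emph{and of} $v_t$. For $N\ge 9$ one has $\alpha=\frac{4}{N-4}<1$, and the map $v\mapsto \partial_t(|v|^\alpha v)$ is then not Lipschitz: the difference $\partial_t(|v|^\alpha v)-\partial_t(|u|^\alpha u)$ contains the terms $(|v|^\alpha-|u|^\alpha)u_t$ and $(|v|^{\alpha-2}v^2-|u|^{\alpha-2}u^2)\overline{u}_t$, which for $\alpha\le 1$ are only bounded by $C|v-u|^\alpha|u_t|$. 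The best available estimate is therefore of the form $d(\Phi u,\Phi v)\le \frac12 d(u,v)+C\,d(u,v)^\alpha$, which is not a contraction; and $N\ge 9$ with large data is precisely the case in which the theorem is new, so this is not a peripheral issue. The repair is to keep the bounds on $\Delta v$ and $v_t$ only as \emph{constraints} defining a complete metric space and to take the distance to be the derivative-free quantity $\|u-v\|_{L^\gamma((0,T),L^\rho)}$ alone, for which the Lipschitz bound $|\,|u|^\alpha u-|v|^\alpha v|\le(\alpha+1)(|u|+|v|)^\alpha|u-v|$ suffices. The same non-Lipschitz phenomenon resurfaces in your continuous-dependence step: propagating $u^n_t\to u_t$ requires estimating $(|u^n|^\alpha-|u|^\alpha)u_t$, which is only H\"older in $u^n-u$; one needs a truncation of $|u_t|$ into a bounded part (whose contribution vanishes as $n\to\infty$ via a higher Lebesgue exponent) and a remainder that is small in $L^\gamma L^\rho$ uniformly in $n$, letting $R\to\infty$ last. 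Without such a device the phrase ``propagate the local stability estimate'' does not close for $N\ge 9$.

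The second gap is unconditional uniqueness, which you correctly single out as the main obstacle but leave as ``a careful bootstrap.'' The route you sketch---upgrading an arbitrary solution $u\in C([0,T],\dot H^2(\R^N))$ to $L^\gamma((0,\tau),L^{\Rhostar})$ and then invoking conditional uniqueness---is both unproven and unnecessary. The mechanism that actually works is more direct: from $u^j\in C([0,T],\dot H^2)\hookrightarrow C([0,T],L^{2N/(N-4)})$ one gets $|u^j|^\alpha u^j\in C([0,T],L^2)$, hence by Strichartz $u^1-u^2$ lies in \emph{every} Strichartz space; moreover $|\,|u^1|^\alpha u^1-|u^2|^\alpha u^2|\le f\,|u^1-u^2|$ with $f=(\alpha+1)(|u^1|^\alpha+|u^2|^\alpha)\in C([0,T],L^{N/2}(\R^N))$. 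Splitting $f=f_R+f^R$ with $f_R=\min\{f,R\}$, dominated convergence (using continuity of $f$ in $L^{N/2}$) gives $\|f^R\|_{L^\infty L^{N/2}}\to 0$ as $R\to\infty$, so the $f^R$-part is absorbed into the left-hand side of the dual Strichartz estimate for $R$ large, while the $f_R$-part carries a factor $\tau^{1/2}$ and is absorbed for $\tau$ small; this forces the two solutions to coincide near $t=0$ and one concludes by a connectedness argument. Some version of this truncation-and-absorption step is indispensable in the critical case, where no smallness is gained from shortening the time interval alone, and it is absent from your proposal.
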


Note that if $u\in C([0, T], \dot H^2 (\R^N ) )$, then $ |u|^\alpha u\in C([0, T], L^2 (\R^N ) )$ by Sobolev's embedding (see~\eqref{fAl17} below), so that  equation~\eqref{NLS} makes sense in $L^2 (\R^N ) $. 

We note that Theorem~\ref{eComplete} is the $H^2$ counterpart of what is known in the $H^1$-critical case $\alpha =\frac {4} {N-2}$, $N\ge 3$. 
The existence part of Theorem~\ref{eComplete} in the nonhomogeneous space $H^2 (\R^N )$ is well-known  for  $N\le 7$  \cite[Theorem~1.2]{CazenaveW}, and for $N\ge 8$ and
small initial values $\DI$ \cite[Theorem~1.4]{CazenaveW}.
Local existence for large data in $H^2$ when $N\ge 8$, local existence and unconditional uniqueness in the homogeneous space $\dot H^2$, and continuous dependence are new, as far as we are aware. 
Our proof of the existence part of Theorem~\ref{eComplete} follows essentially the proof in~\cite{CazenaveW}, which is a fixed-point argument. 
The only noticeable modifications are Lemmas~\ref{eNLem1} and~\ref{eNLem2}  below, which  provide estimates of the nonlinear term $ |u|^\alpha u$. These estimates replace, in our proof, the estimates given by Lemma~5.6 in~\cite{CazenaveW}, and allow us to remove the small data requirement in~\cite[Theorem~1.4]{CazenaveW}. Moreover, the set in which the fixed-point is constructed is modified, with respect to~\cite{CazenaveW}, in order to consider initial values in the homogeneous space $\dot H^2$.  (See Definition~\ref{eSetY} below.) This modification is also the key to prove property~\eqref{eComplete:1} of Theorem~\ref{eComplete}, which means that the nonlinear term in~\eqref{NLSI} has better regularity properties than the solution $u$ itself. 
Unconditional uniqueness follows from the argument used in~\cite[Proposition~4.2.5]{CLN} for the $H^1$-critical case. Continuous dependence is established by adapting the method of~\cite[Theorem~III$'$]{Kato3}. Since we are in the critical case, a truncation argument is used, as in the proof of unconditional uniqueness. 

The rest of this paper is organized as follows. In Section~\ref{sPrelim}, we introduce the notation and establish a few useful estimates. In Section~\ref{sUU}, we prove unconditional uniqueness. Section~\ref{sLocal} is devoted to local existence and Section~\ref{sCDep} to local continuous dependence. We complete the proof of Theorem~\ref{eComplete} in Section~\ref{sProofMain}. 
Finally,  an appendix is devoted to the proof of a technical lemma (Lemma~\ref{eElem2v2} below).

\section{Notation and preliminary results} \label{sPrelim} 
Throughout this paper, all the  function spaces we consider are made up of complex-valued functions. 
Given $1\le p\le \infty $, we denote by $p'$ the conjugate exponent defined by $\frac {1} {p'}= 1- \frac {1} {p}$. 
We say that a pair $(q,r)$ is admissible if $(q,r)\in \Admis$, where
\begin{equation} \label{fAdmis} 
\Admis = \Bigl\{ (q,r)\in [2,\infty ]\times [2, 2N/(N-2)];\, \frac {2} {q}= N \Bigl( \frac {1} {2}- \frac {1} {r} \Bigr) \Bigr\}, 
\end{equation} 
and we recall Strichartz's estimates
\begin{gather} 
\sup  _{ (q,r) \in \Admis} \| e^{i \cdot \Delta } w\| _{ L^q (\R , L^r )  } \le  \CStr   \| w \| _{ L^2 }, \label{fAl25} \\
\sup  _{ (q,r) \in \Admis}  \Bigl\| \int _0^\cdot  e^{i(\cdot -s) \Delta }f(s)\,ds \Bigr\| _{ L^q ((0,T), L^r )}\le   \CStr  \inf _{  (q,r) \in \Admis } \|f\| _{  L^{q '} ((0,T), L^{r '}) },  \label{fAl26}
\end{gather} 
valid for $0<T\le \infty $, 
where the constant $ \CStr $ depends only on $N$. 
Moreover, if $w\in L^2 (\R^N ) $, then $e^{i \cdot \Delta } w \in C(\R, L^2 (\R^N ) )$; and if the right-hand side of~\eqref{fAl26} is finite, then the integral on the left-hand side belongs to $C([0,T], L^2 (\R^N ) )$ if $T<\infty $ and to $C([0,\infty ), L^2 (\R^N ) )$ if $T=\infty $.
(See~\cite{Strichartz, KeelT}.)

It is convenient to introduce the numbers
\begin{gather} 
\rho = \frac {2N (N-2)} {N^2 - 4N +8},\quad \gamma = \frac {2(N-2)} {N-4}, \label{fAl2} \\
\beta  = \frac {2N^2} {N^2-2N+8}, \quad \Twostar = \frac {2N} {N-4}, \label{fAl3b1}
\end{gather} 
and
\begin{equation} \label{fAl4} 
 \Rhostar = \frac {2N (N-2)} {(N-4)^2}, \quad \Betastar = \frac {2N^2} {(N-2)(N-4)}.
\end{equation} 
It is straightforward to verify that $(\gamma ,\rho )$ and $(\Twostar, \beta )$ are admissible pairs and that $2 < \beta  < \rho $.
We also recall the following
Sobolev's inequalities (see~\cite{BerghL, Triebel}). 
\begin{equation} \label{fAl17}
 \| u \| _{ L^{ \Twostar }  }\le  \CSob   \| \Delta u \| _{ L^2 },\quad   \| u \| _{ L^{ \Rhostar } } \le  \CSob   \| \Delta u\| _{ L^\rho  }, \quad \| u \| _{ L^{\Betastar} } \le \CSob  \| \Delta u\| _{ L^{\beta } }.
\end{equation} 
We consider the space $\dot H^2 (\R^N ) $
defined as the completion of $\Srn$ for the norm $ \| u \| _{ \dot H^2 }=  \| \Delta u \| _{ L^2 }$. 
Alternatively, in view of~\eqref{fAl17}, $\dot H^2 (\R^N ) $ is the set of $u\in L^\Twostar (\R^N )$ such that $\Delta u\in L^2 (\R^N ) $. 
Similarly, $\dot H^{2,\rho } (\R^N ) $ is the completion of $\Srn$ for the norm $ \| u \| _{ \dot H^{2, \rho } }=  \| \Delta u \| _{ L^\rho  }$ or, equivalently,  the set of $u\in L^\Rhostar (\R^N )$ such that $\Delta u\in L^\rho  (\R^N ) $. Note in particular that by~\eqref{fAl17}, $\dot H^2 (\R^N ) \hookrightarrow L^\Twostar (\R^N ) $ and $\dot H^{2, \rho }(\R^N )\hookrightarrow L^\Rhostar (\R^N ) $. As is well known, the Schr\"o\-din\-ger group $(e^{it\Delta }) _{ t\in \R }$ is a group of isometries on $L^2 (\R^N ) $ and on $\dot H^2 (\R^N ) $.

We now introduce the set $\Yset _{ \DI, T, M }$, in which we construct local solutions of~\eqref{NLS} by a fixed-point argument (see Section~\ref{sLocal}).

\begin{defn} \label{eSetY} 
Let $T, M>0$ and $\DI \in  \dot H^2 (\R^N ))$. 
We denote by $\Yset _{ \DI, T, M }$ the set of $u$ such that
\begin{enumerate}[{\rm (i)}] 

\item  \label{eSetY:1} $u\in  L^\gamma ((0,T),  \dot H^{2, \rho } (\R^N ))$ and $ \| \Delta u\| _{ L^\gamma ((0,T), L^\rho ) }\le M$,

\item  \label{eSetY:2}  $u_t \in L^\gamma ((0,T),  L^\rho (\R^N ))$ and $ \|u_t\| _{ L^\gamma ((0,T), L^\rho ) } \le M$,

\item  \label{eSetY:3}  $u- e^{i \cdot  \Delta }\DI\in  L^\gamma ((0,T),  L^\rho (\R^N ))$,

\item  \label{eSetY:4}  $u(0) = \DI $,
\end{enumerate} 
\noindent where $\rho$ and $\gamma $ are given by~\eqref{fAl2}. 
Moreover, we set
\begin{equation*} 
\dist (u,v)=  \|u-v\| _{ L^\gamma ((0,T), L^\rho ) },
\end{equation*}
for all  $u,v\in \Yset_{ \DI, T,M }$.
\end{defn} 

\begin{rem} \label{eRem1} 
\begin{enumerate}[{\rm (i)}] 

\item \label{eRem1:1} Note that by Strichartz's estimate~\eqref{fAl25}, $\partial _t e^{i\cdot \Delta }\DI = ie^{i \cdot \Delta } \Delta \DI \in L^\gamma (\R , L^\rho (\R^N ) )$. 
Thus if $u\in \Yset _{ \DI, T, M } $, then $u- e^{i \cdot  \Delta }\DI\in  W^{1, \gamma } ((0,T),  L^\rho (\R^N ))  $, so that $u - e^{i \cdot  \Delta }\DI \in C([0,T), L^\rho (\R^N ) )$. Since $e^{i \cdot  \Delta }\DI\in  C(\R, \dot H^2 (\R^N ) ) \hookrightarrow C(\R, L^\Twostar (\R^N ) )$, we see that $u\in C([0,T) , L^\rho  (\R^N ) +L^\Twostar (\R^N ) )$. In particular, the condition $u(0)= \DI $ in Definition~$\ref{eSetY}$  makes sense in $L^\rho  (\R^N ) +L^\Twostar (\R^N )$. 

\item \label{eRem1:2} It is clear that $\dist$ is a distance on $\Yset _{ \DI, T, M }  $ and it is
not difficult to show that $(\Yset _{ \DI, T, M }  , \dist)$ is a complete metric space. 

\end{enumerate} 
\end{rem} 

In the rest of this section, we establish  useful estimates of functions in $\Yset _{ \DI, T, M } $.
To prove these estimates, we will use the following elementary inequalities.

\begin{lem} \label{eElem} 
Given any $a >0$, there exists a constant $ C(a )$ such that
\begin{gather} 
  | \,  |u|^a u-  |v|^a v |\le (a +1) ( |u| + |v|)^a  |u-v|, \label{fSPL1}  \\
 |\,  |u|^a  u - |v|^a  v| \le C(a )  |\,  |u|^{a  +1} u - |v|^{a  +1} v|^{\frac {a  +1} {a  +2}}, \label{fElem1} 
\end{gather} 
and 
\begin{multline} \label{feRemBSt}
 |\,  |u|^a  - |v|^a   |+  |\,  |u|^{a  -2} u^2 -  |v|^{a  -2} v^2 | \\ \le
  \begin{cases}
  C(a ) ( |u|^{a  -1} +  |v|^{a  -1})   |u-v| & \text{if }a  \ge 1, \\
C (a )  |u-v|^a  &  \text{if }0<a  \le 1,
 \end{cases}
\end{multline}
for all $u,v\in \C$. 
\end{lem}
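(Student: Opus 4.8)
The plan is to prove Lemma~\ref{eElem} by reducing everything to a small number of one-variable elementary estimates and then homogenizing. First I would record the basic fact that for any exponent $b\ge 0$ the map $z\mapsto |z|^b z$ on $\C$ is $C^1$ away from the origin with $\nabla (|z|^b z)$ of size $O((b+1)|z|^b)$, so that by writing $|u|^a u - |v|^a v = \int_0^1 \frac{d}{dt}\bigl(|v+t(u-v)|^a (v+t(u-v))\bigr)\,dt$ and bounding $|v+t(u-v)|\le |u|+|v|$ one gets~\eqref{fSPL1} directly, with constant $a+1$. The same integral representation handles the first summand $\bigl|\,|u|^a - |v|^a\,\bigr|$ and the second summand $\bigl|\,|u|^{a-2}u^2 - |v|^{a-2}v^2\,\bigr|$ of~\eqref{feRemBSt} in the range $a\ge 1$, since then $w\mapsto |w|^a$ and $w\mapsto |w|^{a-2}w^2$ are Lipschitz on bounded sets with derivative $O((|u|^{a-1}+|v|^{a-1}))$ along the segment. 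For $0<a\le 1$ the functions $w\mapsto |w|^a$ and $w\mapsto |w|^{a-2}w^2$ are only $a$-H\"older (not Lipschitz) near $0$, so there I would instead invoke the scalar inequality $\bigl||x|^a-|y|^a\bigr|\le |x-y|^a$ for $x,y\ge 0$ together with an elementary complex-variable analogue for $|w|^{a-2}w^2$; a clean way is to split according to whether $|u-v|$ is larger or smaller than $\max(|u|,|v|)$ and in the ``small'' regime use the $C^1$ bound on the segment, in the ``large'' regime bound each term separately by $(|u|+|v|)^a \le (2|u-v|)^a$ up to a constant.

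The interpolation-type inequality~\eqref{fElem1} is the one that needs a genuinely different idea, and I expect it to be the main obstacle. The point is that it is \emph{scale-invariant}: both sides scale like (length)$^{a+1}$ under $u,v\mapsto ku,kv$, so by homogeneity it suffices to prove it on the unit sphere, say under the normalization $\max(|u|,|v|)=1$, and then a compactness argument would work \emph{except} that the right-hand side can vanish (when $u=v$) while we must still control the left-hand side, which also vanishes there — so one needs a quantitative, not merely qualitative, argument near the diagonal. The mechanism is: when $u$ and $v$ are comparable, $\bigl|\,|u|^{a+1}u-|v|^{a+1}v\,\bigr|\gtrsim |u-v|$ (by~\eqref{fSPL1} applied with exponent $a+1$ in reverse, i.e. a lower bound along the segment, valid because the integrand $|v+t(u-v)|^{a+1}$ is bounded below by a positive constant on the normalized configuration when $|u-v|$ is not too large), so the right-hand side is $\gtrsim |u-v|^{(a+1)/(a+2)}\ge |u-v|$ (since $|u-v|\le 2$ and the exponent is $<1$), which dominates the left-hand side $\lesssim |u-v|$ from~\eqref{fSPL1}; when $u$ and $v$ are \emph{not} comparable, say $|v|\le \tfrac12|u|$, both $\bigl|\,|u|^au-|v|^av\,\bigr|$ and $\bigl|\,|u|^{a+1}u-|v|^{a+1}v\,\bigr|$ are comparable to $|u|^{a+1}$ and $|u|^{a+2}$ respectively (up to constants depending on $a$), and $(|u|^{a+2})^{(a+1)/(a+2)}=|u|^{a+1}$, which closes the estimate.

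Concretely I would organize the proof of~\eqref{fElem1} as follows. Fix $a>0$. By homogeneity assume $\max(|u|,|v|)=1$, and WLOG $|u|=1$. \textbf{Case 1: $|v|\le \tfrac12$.} Then $\bigl|\,|u|^{a+1}u-|v|^{a+1}v\,\bigr|\ge 1 - (\tfrac12)^{a+2}\ge c(a)>0$ while the left side is $\le 1 + (\tfrac12)^{a+1}\le 2$; since $x\mapsto x^{(a+1)/(a+2)}$ is bounded below by a positive constant on $[c(a),\infty)$, the inequality holds with a suitable $C(a)$. \textbf{Case 2: $|v|\ge \tfrac12$.} Then along the segment $w(t)=v+t(u-v)$ we have, for the function $g(w)=|w|^{a+1}w$, the lower bound $\bigl|g(u)-g(v)\bigr| = \bigl|\int_0^1 \nabla g(w(t))\cdot(u-v)\,dt\bigr|$, and one checks (splitting real and imaginary parts, or using that $\nabla g$ has a dominant radial component of size $\ge (a+1)|w|^{a+1}$ never smaller than $(a+1)(\tfrac12)^{a+1}$... — here I would do the small calculation showing $\mathrm{Re}\,\overline{(u-v)}\,(g(u)-g(v))\ge c(a)|u-v|^2$, e.g. by monotonicity of $r\mapsto r^{a+1}$) that $\bigl|g(u)-g(v)\bigr|\ge c(a)|u-v|$. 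Hence the right side of~\eqref{fElem1} is $\ge C(a)^{-1}|u-v|^{(a+1)/(a+2)}$, and since $|u-v|\le 2$ and $(a+1)/(a+2)<1$ we have $|u-v|^{(a+1)/(a+2)}\ge 2^{-1}|u-v|\ge c|u-v|\ge c\bigl|\,|u|^au-|v|^av\,\bigr|/(a+1)$ by~\eqref{fSPL1}. Combining the two cases and undoing the normalization gives~\eqref{fElem1}. The estimates~\eqref{fSPL1} and~\eqref{feRemBSt} are then, as indicated above, routine consequences of the fundamental theorem of calculus along segments together with the scalar H\"older inequality $\bigl||x|^a-|y|^a\bigr|\le|x-y|^a$ for the sub-linear range; I would not belabor those computations.
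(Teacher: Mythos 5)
Your argument is correct, but for~\eqref{fElem1} --- the only inequality the paper actually proves in detail (it treats~\eqref{fSPL1} as immediate and quotes~\eqref{feRemBSt} from the literature) --- you take a genuinely different route. The paper also exploits homogeneity, but it does so by substituting $z=|v/u|^{a+1}(v/u)$ (so $|z|\le 1$ after ordering $|v|\le|u|$) and reducing~\eqref{fElem1} to the single one-variable inequality $\bigl|1-|z|^{-\frac{1}{a+2}}z\bigr|\le 2^{\frac{a+3}{a+2}}|1-z|^{\frac{a+1}{a+2}}$ on the closed unit disc, which it then verifies by a short chain of triangle-inequality manipulations starting from $|z|\le|z|^{\frac{1}{a+2}}$; this yields the explicit constant $C(a)=2^{\frac{a+3}{a+2}}$ with no case analysis and no differentiation. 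Your version instead normalizes $\max(|u|,|v|)=1$ and splits into the non-comparable regime ($|v|\le\tfrac12$, where the right-hand side is bounded below away from zero) and the comparable regime, where the key step is the coercivity bound $\re\bigl[\overline{(u-v)}\,(|u|^{a+1}u-|v|^{a+1}v)\bigr]\ge c(a)|u-v|^2$ for $|u|,|v|\ge\tfrac12$; that bound does hold (writing the left side as $|u|^{a+3}+|v|^{a+3}-(|u|^{a+1}+|v|^{a+1})\re(u\bar v)$ and minimizing over $\re(u\bar v)$ reduces it to the mean-value estimate for $r\mapsto r^{a+2}$ on $[\tfrac12,1]$), so your Case~2 closes via $|u-v|^{\frac{a+1}{a+2}}\gtrsim|u-v|$ and~\eqref{fSPL1}, exactly as you describe. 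The trade-off is that your proof is longer and produces an inexplicit constant, but it makes the mechanism transparent (the right-hand side of~\eqref{fElem1} is genuinely coercive in $|u-v|$ where $u,v$ are comparable, and dominates trivially where they are not), whereas the paper's computation is shorter and sharper but somewhat opaque. Your treatments of~\eqref{fSPL1} (segment integration) and of~\eqref{feRemBSt} (segment integration for $a\ge1$; for $a\le1$ a dichotomy on $|u-v|$ versus $\max(|u|,|v|)$, which does work once the threshold is taken at $\tfrac12\max(|u|,|v|)$ so the segment stays away from the origin) are standard and correct, and simply fill in what the paper delegates to~\cite{CazenaveFH}.
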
 

\begin{proof} 
Estimate~\eqref{fSPL1} is immediate and~\eqref{feRemBSt} follows from~\cite[(2.26) and~(2.27)]{CazenaveFH}. 
We prove \eqref{fElem1} for completeness.  
Let $z\in \C$, $ |z|\le 1$. It follows that $ |z|\le  |z|^{\frac {1} {a +2}}$ so that
$  |1- |z| ^{\frac {1} {a +2}}| = 1- |z| ^{\frac {1} {a +2}} \le 1- |z| =  |1- |z|\, |\le  |1-z|$;
and so,
\begin{equation}  \label{fElem2} 
 |1-  |z|^{-\frac {1} {a +2}} z |  \le  |1-z|+  |z| ^{\frac {a +1} {a +2}}  |  |z|^{\frac {1} {a +2}} -1|   \le 2 |1-z|.
\end{equation} 
Since $ |1-z|\le 2$, we have $ |1-z|\le 2^{\frac {1} {a +2}}  |1-z|^{\frac {a +1} {a +2}}$ and we deduce from~\eqref{fElem2}  that
\begin{equation} \label{fElem3} 
|1-  |z|^{-\frac {1} {a +2}} z | \le 2^{\frac {a +3} {a +2}}  |1-z|^{\frac {a +1} {a +2}} \quad  \text{if }\quad  |z|\le 1. 
\end{equation} 
Let now  $u,v\in \C$ with $ |v| \le   |u|$ and $ |u|\not = 0$. Inequality~\eqref{fElem1} (with $C(a )= 2^{\frac {a +3} {a +2}} $) follows by setting $z=  |v/u|^{a +1}  (v/u)$ in~\eqref{fElem3} and multiplying by $ |u|^{a +1}$.
\end{proof} 

\begin{lem}\label{eElem2v2}
Let $T>0$, $a>0$. Let $q_1, q_2,r_1, r_2\ge 1$ satisfy $q_1, r_1\ge a+1$, $\frac {a} {q_1}+ \frac {1} {q_2}\le 1$, $\frac {a} {r_1}+ \frac {1} {r_2}\le 1$. If $u\in L^{q_1}  ( (0,T), L^{r_1} (\R^N )  )$ and $u_t \in  L^{q_2}((0,T), L^{r_2} (\R^N ) )$, then
\begin{equation} \label{eElem2v2:1}
 \partial _t ( |u|^a u)= \frac {a +2} {2}  |u|^a u _t + \frac {a} {2}  |u|^{a -2}u^2   \overline{u} _t,  
\end{equation} 
a.e. on $(0,T) \times \R^N $.
\end{lem}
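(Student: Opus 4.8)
The plan is to prove the formula~\eqref{eElem2v2:1} first for smooth functions and then pass to the limit using the integrability hypotheses. More precisely, I would argue as follows.

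\textbf{Step 1: the pointwise chain rule for smooth $u$.} If $u$ is $C^1$ in $t$ and never vanishes, then $g(z)=|z|^a z = (z\bar z)^{a/2} z$ is $C^1$ away from $0$, and a direct computation of $\partial_t g(u(t,x))$ using $\partial_t|u|^2 = u\overline{u}_t + \overline u\, u_t$ gives
\begin{equation*}
\partial_t(|u|^a u) = \tfrac{a}{2}|u|^{a-2}u\,(u\overline u_t + \overline u u_t) + |u|^a u_t = \tfrac{a+2}{2}|u|^a u_t + \tfrac{a}{2}|u|^{a-2}u^2\overline u_t,
\end{equation*}
which is exactly~\eqref{eElem2v2:1}. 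To handle zeros of $u$ (where $g$ is merely Hölder if $a<1$, but still Lipschitz if $a\ge 1$, and in any case $|z|^a z$ is $C^1$ in the real sense for $a>0$ since the derivative $|u|^{a-2}u^2$ extends continuously by $0$) one can either note that $|z|^a z$ is genuinely $C^1$ as a map $\R^2\to\R^2$ for every $a>0$, so the classical chain rule applies verbatim, or approximate $|u|^a u$ by $(|u|^2+\varepsilon^2)^{a/2}u$, differentiate, and let $\varepsilon\downarrow 0$ using dominated convergence (the $\varepsilon$-derivatives are bounded by $C(|u|^{a-1}+1)|u_t|$, say, locally).

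\textbf{Step 2: approximation.} For general $u,u_t$ as in the statement, approximate $u$ by a sequence $u_n$ that is smooth (e.g.\ mollify in $t$ and $x$, or use that $C^\infty_c((0,T)\times\R^N)$ is dense in the relevant anisotropic spaces) so that $u_n\to u$ in $L^{q_1}((0,T),L^{r_1})$ and $\partial_t u_n\to u_t$ in $L^{q_2}((0,T),L^{r_2})$, with the convergences also holding a.e.\ along a subsequence. By Step~1 the identity~\eqref{eElem2v2:1} holds for each $u_n$. I would then test against $\phi\in C^\infty_c((0,T)\times\R^N)$ and pass to the limit in the weak formulation: $\int |u_n|^a u_n\,\partial_t\phi\to\int|u|^a u\,\partial_t\phi$ because $|u_n|^a u_n\to|u|^a u$ in $L^1_{\mathrm{loc}}$ (using~\eqref{fSPL1} together with Hölder, since $\frac{a}{q_1}+\frac1{q_1}\le\frac{a+1}{q_1}\cdot\frac{q_1}{a+1}\le 1$ is not quite it — rather one uses $q_1,r_1\ge a+1$ to get $|u_n|^a u_n\in L^{q_1/(a+1)}(L^{r_1/(a+1)})$ with $q_1/(a+1),r_1/(a+1)\ge 1$, and the difference estimate~\eqref{fSPL1} plus uniform bounds gives convergence); and on the right-hand side, $|u_n|^a\partial_t u_n\to|u|^a u_t$ in $L^1_{\mathrm{loc}}$ by Hölder with exponents controlled by the hypotheses $\frac{a}{q_1}+\frac1{q_2}\le1$ and $\frac{a}{r_1}+\frac1{r_2}\le1$, together with a.e.\ convergence of $|u_n|^a$ and the generalized dominated convergence theorem, and similarly for the $|u_n|^{a-2}u_n^2\overline{\partial_t u_n}$ term using~\eqref{feRemBSt} to control $|u_n|^{a-2}u_n^2-|u|^{a-2}u^2$. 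This identifies $\partial_t(|u|^a u)$, a priori only a distribution, with the stated $L^1_{\mathrm{loc}}$ function, and since both sides lie in $L^1_{\mathrm{loc}}((0,T)\times\R^N)$ the identity holds a.e.

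\textbf{Main obstacle.} The delicate point is the passage to the limit in the two nonlinear terms on the right-hand side when $0<a<1$, since then $|u|^{a-2}u^2$ and $|u|^a$ are only Hölder-continuous in $u$ and blow up in derivative near $u=0$; one must lean on the second branch of~\eqref{feRemBSt} (the bound $C(a)|u-v|^a$) and on uniform integrability rather than on a Lipschitz estimate. Bookkeeping the exponents — checking that every product that appears sits in some $L^{p}(L^{p'})$ with $p,p'\ge1$ so that Hölder and the generalized dominated convergence theorem apply — is routine but is where the precise hypotheses $q_1,r_1\ge a+1$, $\frac{a}{q_1}+\frac1{q_2}\le1$, $\frac{a}{r_1}+\frac1{r_2}\le1$ are consumed. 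I expect the write-up to spend most of its length on exactly this exponent bookkeeping and on justifying that a smooth approximating sequence with the required two-parameter convergence exists; the algebraic identity itself is immediate.
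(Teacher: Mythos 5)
Your proposal is correct and follows essentially the same route as the paper's Appendix: prove the identity for smooth functions (the paper's Lemma~\ref{eApprx1} supplies the space--time mollification, with $u$ and $u_t$ extended by zero and the identity checked on the interior strips $(\varepsilon,T-\varepsilon)\times\R^N$ so that $\partial_t u_n\to u_t$ in the right norm), and then pass to the limit using~\eqref{fSPL1}, \eqref{feRemBSt} and H\"older with exactly the exponent hypotheses you list. The only cosmetic difference is that the paper takes the limit a.e.\ along a subsequence after establishing norm convergence of both sides of~\eqref{eElem2v2:1}, rather than through the weak formulation against test functions.
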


The proof of Lemma~\ref{eElem2v2}, which uses an appropriate regularization argument, is postponed to the Appendix. 

\begin{lem} Given any $T>0$,
\begin{gather} 
 \| u \| _{ L^\Twostar ((0, T), L^\beta )} \le    \| u\| _{ L^\infty  ((0, T), L^2 )} ^{\frac {2} {N}}  \| u \| _{ L^\gamma  ((0, T), L^\rho  )} ^{\frac {N-2} {N}},\label{fHLD1}  \\
 \| u \| _{ L^{(\alpha +1)\gamma }((0,T) , L^{(\alpha +1) \rho })}^{\alpha +1}  \le 
 \| u \| _{ L^{\infty  }((0,T) , L^{ \Twostar })}^\alpha \| u \| _{ L^{ \gamma }((0,T) , L^{  \Rhostar  })} ,
 \label{fAl16}
\end{gather} 
and
\begin{multline} \label{fSP11} 
 \| \,  |u|^\alpha u-  |v|^\alpha v \| _{ L^{\gamma '}((0,T), L^{\rho '}) } \\ \le (\alpha +1) (
  \|u\| _{ L^\gamma ((0,T), L^{ \Rhostar } )} +   \|v\| _{ L^\gamma ((0,T), L^{ \Rhostar } )} )^\alpha 
  \|u- v\| _{ L^\gamma ((0,T), L^{\rho } )},
\end{multline}
hold for all functions $u,v$ for which the right-hand side makes sense. 
Moreover,
\begin{equation} \label{fAl23}
 \| \partial _t[  |u|^\alpha u]  \| _{ L^{\gamma '}((0,T), L^{\rho '} )}  \le (\alpha +1) 
  \|u\| _{ L^\gamma ((0,T), L^{ \Rhostar } )}^\alpha  \|u_t\| _{ L^\gamma ((0,T), L^\rho ) } ,
\end{equation} 
for all $u\in L^\gamma ((0,T), L^{ \Rhostar } (\R^N ))$ such that $u_t \in L^\gamma ((0,T), L^{ \rho  } (\R^N ))$.
\end{lem}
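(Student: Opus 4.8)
The plan is to obtain all four inequalities from Hölder's inequality, applied first in the space variable and then in time, the exponents being forced by the explicit values in~\eqref{fAl2}--\eqref{fAl4}; for~\eqref{fSP11} and~\eqref{fAl23} we additionally use the pointwise bounds of Lemmas~\ref{eElem} and~\ref{eElem2v2}.

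For~\eqref{fHLD1}, I would first record the interpolation relation $\frac {2} {N}\cdot\frac {1} {2}+\frac {N-2} {N}\cdot\frac {1} {\rho }=\frac {1} {\beta }$, which is immediate from~\eqref{fAl2}--\eqref{fAl3b1}; Hölder in space then gives $\|u(t)\|_{L^\beta }\le\|u(t)\|_{L^2 }^{2/N}\,\|u(t)\|_{L^\rho }^{(N-2)/N}$ for a.e.\ $t\in(0,T)$. Raising this to the power $\Twostar $, integrating over $(0,T)$, bounding the $L^2 $ factor by $\|u\|_{L^\infty ((0,T),L^2 )}$, and using the identity $\frac {N-2} {N}\Twostar =\gamma $ yields~\eqref{fHLD1}. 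For~\eqref{fAl16}, I would argue pointwise in $t$: Hölder in space, based on the identity $\frac {\alpha } {\Twostar }+\frac {1} {\Rhostar }=\frac {1} {\rho }$, gives $\|u(t)\|_{L^{(\alpha +1)\rho }}^{\alpha +1}=\bigl\|\,|u(t)|^{\alpha +1}\bigr\|_{L^\rho }\le\|u(t)\|_{L^\Twostar }^{\alpha }\,\|u(t)\|_{L^\Rhostar }$; bounding the first factor on the right by $\|u\|_{L^\infty ((0,T),L^\Twostar )}^{\alpha }$ and then taking the $L^\gamma $ norm in $t$ of both sides --- the left-hand side becoming exactly $\|u\|_{L^{(\alpha +1)\gamma }((0,T),L^{(\alpha +1)\rho })}^{\alpha +1}$ since $(\alpha +1)\gamma /((\alpha +1)\rho )=\gamma /\rho $ --- gives~\eqref{fAl16}.

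For~\eqref{fSP11}, I would start from~\eqref{fSPL1} with $a=\alpha $, namely $|\,|u|^\alpha u-|v|^\alpha v|\le(\alpha +1)(|u|+|v|)^\alpha |u-v|$. Hölder in space with the conjugate pair $\Rhostar /\alpha $ and $\rho $ --- legitimate because $\frac {\alpha } {\Rhostar }+\frac {1} {\rho }=1-\frac {1} {\rho }=\frac {1} {\rho '}$ --- together with the triangle inequality in $L^\Rhostar $ gives, for a.e.\ $t$, the bound $\|\,|u|^\alpha u-|v|^\alpha v\|_{L^{\rho '}}\le(\alpha +1)\bigl(\|u\|_{L^\Rhostar }+\|v\|_{L^\Rhostar }\bigr)^\alpha \|u-v\|_{L^\rho }$. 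Taking the $L^{\gamma '}$ norm in $t$ and applying Hölder in time with the conjugate pair $\gamma /\alpha $ and $\gamma $ --- legitimate because $\gamma =\alpha +2$, so that $\frac {\alpha } {\gamma }+\frac {1} {\gamma }=\frac {1} {\gamma '}$ --- then yields~\eqref{fSP11}. Finally, for~\eqref{fAl23} I would invoke Lemma~\ref{eElem2v2} with $a=\alpha $, $q_1=q_2=\gamma $, $r_1=\Rhostar $ and $r_2=\rho $: its hypotheses reduce to $\gamma \ge\alpha +1$, $\Rhostar \ge\alpha +1$, $\frac {\alpha +1} {\gamma }\le1$ and $\frac {\alpha } {\Rhostar }+\frac {1} {\rho }\le1$, all of which are easily checked for $N\ge5$. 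Identity~\eqref{eElem2v2:1} then gives the pointwise bound $|\partial _t(|u|^\alpha u)|\le(\alpha +1)|u|^\alpha |u_t|$, after which the same two-step Hölder argument as for~\eqref{fSP11}, with $|u-v|$ replaced by $|u_t|$, produces~\eqref{fAl23}.

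I do not expect a genuine obstacle here: the whole content is bookkeeping of Lebesgue exponents, so the only points demanding attention are the verification of the numerical identities among $\beta ,\gamma ,\rho ,\Twostar ,\Rhostar ,\alpha $ invoked above and the checking that the hypotheses of Lemma~\ref{eElem2v2} hold.
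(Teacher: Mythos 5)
Your proposal is correct and follows essentially the same route as the paper: the paper's proof is a one-line appeal to H\"older's inequality in space and time based on exactly the exponent relations you verify ($\frac{1}{\beta}=\frac{1}{N}+\frac{N-2}{N\rho}$, $\frac{1}{\rho}=\frac{\alpha}{\Twostar}+\frac{1}{\Rhostar}$, $\frac{1}{\rho'}=\frac{\alpha}{\Rhostar}+\frac{1}{\rho}$, $\frac{1}{\gamma'}=\frac{\alpha}{\gamma}+\frac{1}{\gamma}$), together with~\eqref{fSPL1} and the pointwise bound $|\partial_t(|u|^\alpha u)|\le(\alpha+1)|u|^\alpha|u_t|$ from~\eqref{eElem2v2:1}. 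You merely spell out the bookkeeping the authors leave implicit, and your numerical identities and the verification of the hypotheses of Lemma~\ref{eElem2v2} all check out.
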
 

\begin{proof} 
Both~\eqref{fHLD1} and~\eqref{fAl16} follow from H\"older's inequality in space and time,  by using the relations $\frac {1} {\beta }=  \frac {1} {N} +  \frac {N-2} {N\rho }$ for the first one, and
$\frac {1} {  \rho }=  \frac {\alpha } { \Twostar  } +  \frac {1} { \Rhostar }$ for the second one.
Estimates~\eqref{fSP11}  and~\eqref{fAl23}  follow from H\"older's inequality in space and time
and the relations $\frac {1} {\rho '}= \frac {\alpha } { \Rhostar }+ \frac {1} {\rho } $ and $\frac {1} {\gamma '}= \frac {\alpha } {\gamma }+ \frac {1} {\gamma } $, the first one by using~\eqref{fSPL1} and the  second one by using the inequality $ | \partial _t[  |u|^\alpha u]  | \le (\alpha +1)  |u|^\alpha  |u_t| $ (see~\eqref{eElem2v2:1}).
\end{proof} 

\begin{lem} 
Given any $T>0$, 
\begin{equation} \label{fFN1v2} 
 \| \,  |u|^\alpha u \| _{L^2 ((0,T) L^{\frac {2N} {N-2}} ) }   \le \CSob ^{\alpha +1}
   \| \Delta u\| _{ L^{\infty  } ((0,T),  L^{2 } )}^{\frac {2} {N-4}} 
  \| \Delta u\| _{ L^{\gamma } ((0,T),  L^{\rho } )}^{\frac {N-2} {N-4}} ,
\end{equation} 
and
\begin{multline} \label{fFN1v3} 
 \| \,  |u|^\alpha u - |v|^\alpha v\| _{L^2 ((0,T) L^{\frac {2N} {N-2}} ) }  \\  \le (\alpha +1) \CSob ^{\alpha +1}\CHov     \| \Delta (u-v) \| _{ L^{\infty  } ((0,T),  L^{2 } )}^{\frac {2} {N}} 
\| \Delta (u-v) \| _{ L^{\gamma } ((0,T),  L^{\rho } )}^{\frac {N-2} {N}} ,
\end{multline} 
where
\begin{equation*} 
\CHov =  \| \Delta u \| _{ L^{\infty  } ((0,T),  L^{2 } )}^\alpha + \| \Delta v \| _{ L^{\infty  } ((0,T),  L^{2 } )}^\alpha   + \| \Delta u \| _{ L^{\gamma  } ((0,T),  L^{\rho } )}^\alpha + \| \Delta v \| _{ L^{\gamma  } ((0,T),  L^{\rho } )}^\alpha ,
\end{equation*} 
hold for all $u,v\in L^\infty ((0,T), \dot H^2 (\R^N ) ) \cap L^\gamma  ((0,T), \dot H^{2, \rho } (\R^N ) ) $.
\end{lem}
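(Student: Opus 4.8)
The plan is to obtain both inequalities by combining the pointwise bound~\eqref{fSPL1}, the Sobolev inequality~\eqref{fAl17} with exponents $\Betastar $ and $\beta $, and the space--time interpolation inequality~\eqref{fHLD1}, all fed into H\"older's inequality in the space and in the time variables; the main content is to check that the various exponents match up.

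First I would prove~\eqref{fFN1v2}. Since $\alpha +1= \frac {N} {N-4}$, one has $(\alpha +1)\frac {2N} {N-2}= \Betastar $ and $2(\alpha +1)= \Twostar $. Hence, for fixed $t$, the identity $ \| \, |u|^\alpha u\| _{ L^{2N/(N-2)} }= \| u\| _{ L^{\Betastar } }^{\alpha +1}$ together with~\eqref{fAl17} gives $ \| \, |u|^\alpha u\| _{ L^{2N/(N-2)} }\le \CSob ^{\alpha +1} \| \Delta u\| _{ L^{\beta } }^{\alpha +1}$; taking the $L^2$ norm in $t\in (0,T)$ and using $2(\alpha +1)= \Twostar $ turns the right-hand side into $\CSob ^{\alpha +1} \| \Delta u\| _{ L^{\Twostar }((0,T),L^{\beta }) }^{\alpha +1}$. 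It then remains to apply~\eqref{fHLD1} to $\Delta u$ and raise to the power $\alpha +1$; since $\frac {2(\alpha +1)} {N}= \frac {2} {N-4}$ and $\frac {(N-2)(\alpha +1)} {N}= \frac {N-2} {N-4}$, the outcome is exactly~\eqref{fFN1v2}.

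For~\eqref{fFN1v3} I would argue along the same lines. From~\eqref{fSPL1} and the identity $\frac {N-2} {2N}= \frac {\alpha +1} {\Betastar }$, H\"older's inequality in space gives, for fixed $t$, a bound by $(\alpha +1)( \| u\| _{ L^{\Betastar } }+ \| v\| _{ L^{\Betastar } })^\alpha \| u-v\| _{ L^{\Betastar } }$, which~\eqref{fAl17} turns into $(\alpha +1)\CSob ^{\alpha +1}( \| \Delta u\| _{ L^{\beta } }+ \| \Delta v\| _{ L^{\beta } })^\alpha \| \Delta (u-v)\| _{ L^{\beta } }$. Using $\frac {\alpha +1} {\Twostar }= \frac {1} {2}$, H\"older's inequality in $t\in (0,T)$ then yields a bound by $(\alpha +1)\CSob ^{\alpha +1}( \| \Delta u\| _{ L^{\Twostar }_tL^{\beta } }+ \| \Delta v\| _{ L^{\Twostar }_tL^{\beta } })^\alpha \| \Delta (u-v)\| _{ L^{\Twostar }_tL^{\beta } }$. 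Finally I would apply~\eqref{fHLD1} to each of $\Delta u$, $\Delta v$ and $\Delta (u-v)$ separately: the factor carrying $u-v$ becomes exactly $ \| \Delta (u-v)\| _{ L^\infty _tL^2 }^{2/N} \| \Delta (u-v)\| _{ L^\gamma _tL^\rho  }^{(N-2)/N}$, while the factor $( \| \Delta u\| _{ L^{\Twostar }_tL^{\beta } }+ \| \Delta v\| _{ L^{\Twostar }_tL^{\beta } })^\alpha $ is absorbed into $\CHov $ by first splitting it via $(a+b)^\alpha \le C(\alpha )(a^\alpha +b^\alpha )$ and then applying, to each of the two resulting terms, Young's inequality in the form $( x^{2/N}y^{(N-2)/N})^\alpha \le \frac {2} {N}x^\alpha + \frac {N-2} {N}y^\alpha $.

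I do not expect a genuine obstacle: once the identities $(\alpha +1)\frac {2N} {N-2}= \Betastar $, $\frac {\alpha +1} {\Betastar }= \frac {N-2} {2N}$, $2(\alpha +1)= \Twostar $ and $\frac {\alpha +1} {\Twostar }= \frac {1} {2}$ are recorded, both estimates drop out of~\eqref{fSPL1}, \eqref{fAl17}, \eqref{fHLD1} and H\"older's inequality. The one point requiring a little care is the $(a+b)^\alpha $ manipulation at the end of~\eqref{fFN1v3}: when $\alpha \le 1$ (that is, $N\ge 8$) the subadditivity $(a+b)^\alpha \le a^\alpha +b^\alpha $ keeps every constant equal to the one displayed, whereas when $\alpha >1$ (that is, $5\le N\le 7$) one picks up an extra numerical factor depending only on $N$, which is harmless for the rest of the paper.
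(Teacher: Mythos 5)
Your proposal is correct and follows essentially the same route as the paper's proof: both reduce \eqref{fFN1v2} to the identity $\| \, |u|^\alpha u\|_{L^2_tL^{2N/(N-2)}_x}=\|u\|_{L^\Twostar((0,T),L^\Betastar)}^{\alpha+1}$ plus \eqref{fAl17} and \eqref{fHLD1} applied to $\Delta u$, and both obtain \eqref{fFN1v3} from \eqref{fSPL1}, H\"older in space and time, \eqref{fAl17}, and \eqref{fHLD1} with the $\alpha$-power factor absorbed into $\CHov$ by Young's inequality. Your remark about the extra numerical factor from $(a+b)^\alpha$ when $\alpha>1$ is a fair (and harmless) observation that the paper glosses over.
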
 

\begin{proof} 
Since 
\begin{equation*} 
 \| \  |u|^\alpha u\| _{L^2((0,T),  L^{\frac {2N} {N-2}}) } =  \| u \| _{L^\Twostar((0,T) , L^\Betastar ) }^{\alpha +1}
 \le  \CSob^{\alpha +1}  \| \Delta u \| _{L^\Twostar ((0,T),  L^\beta ) }^{\alpha +1},
\end{equation*} 
by~\eqref{fAl17}, estimate~\eqref{fFN1v2} follows from~\eqref{fHLD1} (applied with $u$ replaced by $\Delta u$). 
Similarly, we deduce from~\eqref{fSPL1} and~\eqref{fAl17} that
\begin{multline*} 
 \| \  |u|^\alpha u -  |v|^\alpha v \| _{L^2((0,T),  L^{\frac {2N} {N-2}}) } \\ \le  (\alpha +1)
  \CSob^{\alpha +1} [ \| \Delta u \| _{L^\Twostar ((0,T),  L^\beta ) }^{\alpha }
 +\| \Delta v \| _{L^\Twostar ((0,T),  L^\beta ) }^{\alpha }] \| \Delta (u-v) \| _{L^\Twostar ((0,T),  L^\beta ) } ,
\end{multline*} 
and~\eqref{fFN1v3} follows by applying~\eqref{fHLD1}.
\end{proof} 

\begin{lem} 
Given any $u , v\in \dot H^2 (\R^N ) $,
\begin{equation} \label{fAl24}
 \| \,  |u |^\alpha u \| _{ L^2 } \le  \CSob ^{\alpha +1}  \| \Delta u \| _{ L^2 }^{\alpha +1} ,
\end{equation} 
and 
\begin{equation} \label{fCs07} 
    \| \,  |u|^\alpha u -  |v|^\alpha v \| _{ L^2 } \le (\alpha +1) \CSob^{\alpha +1} ( \| \Delta u\| _{  L^2 }^\alpha +  \| \Delta  v\| _{ L^2 }^\alpha)  \| \Delta (u-v )\| _{ L^2 }.
\end{equation}
In particular, the map $u\mapsto  |u|^\alpha u$ is continuous $\dot H^2 (\R^N ) \to L^2 (\R^N ) $.
\end{lem}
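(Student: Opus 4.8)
The statement splits into the growth bound~\eqref{fAl24}, the difference bound~\eqref{fCs07}, and the continuity assertion, which will be an immediate consequence of~\eqref{fCs07}. Both inequalities are proved by the same scheme as the time-dependent estimates~\eqref{fFN1v2} and~\eqref{fFN1v3} above --- a pointwise inequality, then H\"older's inequality, then the Sobolev inequalities~\eqref{fAl17} --- only now the time variable is absent and everything is measured in $L^2$ in space. The single point needing care is the exponent bookkeeping, which reduces to the arithmetic identity $2(\alpha+1)=\Twostar$ coming from~\eqref{fAl1}.

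For~\eqref{fAl24} I would first record that $2(\alpha+1)=2\bigl(\tfrac{4}{N-4}+1\bigr)=\tfrac{2N}{N-4}=\Twostar$, so that $\| \, |u|^\alpha u \| _{ L^2 }=\| u \| _{ L^{2(\alpha+1)} }^{\alpha+1}=\| u \| _{ L^\Twostar }^{\alpha+1}$; the first inequality in~\eqref{fAl17} then gives $\| u \| _{ L^\Twostar }\le\CSob\,\| \Delta u \| _{ L^2 }$, and raising to the power $\alpha+1$ yields~\eqref{fAl24}. In particular $|u|^\alpha u\in L^2(\R^N)$ whenever $u\in\dot H^2(\R^N)$, so the left-hand sides of~\eqref{fAl24} and~\eqref{fCs07} make sense.

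For~\eqref{fCs07} I would start from the pointwise bound~\eqref{fSPL1} with $a=\alpha$, that is $| \, |u|^\alpha u-|v|^\alpha v \, |\le(\alpha+1)( |u|+|v| )^\alpha |u-v|$ a.e. on $\R^N$. Then H\"older's inequality in space, with the exponents $\Twostar/\alpha$ and $\Twostar$ (both lying in $[1,\infty)$, since $\Twostar/\alpha=N/2$, and satisfying $\tfrac{\alpha}{\Twostar}+\tfrac{1}{\Twostar}=\tfrac{\alpha+1}{\Twostar}=\tfrac12$ because $2(\alpha+1)=\Twostar$), followed by Minkowski's inequality to bound $\| \, |u|+|v| \, \| _{ L^\Twostar }$ by $\| u \| _{ L^\Twostar }+\| v \| _{ L^\Twostar }$, and finally the first inequality in~\eqref{fAl17} applied to each of the three resulting $L^\Twostar$ norms, produces~\eqref{fCs07}, exactly as in the derivation of~\eqref{fFN1v3}.

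Finally, the continuity of $u\mapsto|u|^\alpha u$ as a map $\dot H^2(\R^N)\to L^2(\R^N)$ is read off from~\eqref{fCs07}: if $\varphi_n\to\varphi$ in $\dot H^2(\R^N)$, then $R:=\sup_{n}\| \Delta\varphi_n \| _{ L^2 }<\infty$, and~\eqref{fCs07} gives $\| \, |\varphi_n|^\alpha\varphi_n-|\varphi|^\alpha\varphi \, \| _{ L^2 }\le(\alpha+1)\CSob^{\alpha+1}\bigl(R^\alpha+\| \Delta\varphi \| _{ L^2 }^\alpha\bigr)\| \Delta(\varphi_n-\varphi) \| _{ L^2 }\to 0$; in fact~\eqref{fCs07} says this map is locally Lipschitz. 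I do not expect any real obstacle here: the only step that must be checked is that the H\"older exponents above are legitimate and that~\eqref{fAl17} applies term by term, which is precisely the identity $2(\alpha+1)=\Twostar$; apart from~\eqref{fSPL1}, \eqref{fAl17}, Minkowski's and H\"older's inequalities, nothing is needed.
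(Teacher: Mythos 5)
Your proof is correct and follows the paper's own argument exactly: the identity $\| \, |u|^\alpha u \| _{ L^2 } = \| u \| _{ L^\Twostar }^{\alpha+1}$ together with~\eqref{fAl17} gives~\eqref{fAl24}, and~\eqref{fSPL1} plus H\"older plus~\eqref{fAl17} gives~\eqref{fCs07}, with continuity read off as you say. The one small caveat (shared with the paper's own write-up) is that your last step, passing from $(\| u \| _{ L^\Twostar }+\| v \| _{ L^\Twostar })^\alpha$ to $\| u \| _{ L^\Twostar }^\alpha+\| v \| _{ L^\Twostar }^\alpha$, is only valid for $\alpha\le 1$, i.e.\ $N\ge 8$; for $N=5,6,7$ one should instead use the sharper pointwise bound $|\,|u|^\alpha u-|v|^\alpha v|\le(\alpha+1)\max(|u|,|v|)^\alpha|u-v|\le(\alpha+1)(|u|^\alpha+|v|^\alpha)|u-v|$ and apply H\"older to each term separately, which yields~\eqref{fCs07} with exactly the stated constant.
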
 

\begin{proof} 
Since  $\| \,  |u |^\alpha u \| _{ L^2 } =  \| u \| _{ L^{ \Twostar  } }^{\alpha +1} $, \eqref{fAl24}  follows from~\eqref{fAl17}. Similarly, we deduce from~\eqref{fSPL1} that
\begin{equation*} 
  \| \,  |u|^\alpha u -  |v|^\alpha v \| _{ L^2 } \le (\alpha +1) ( \| u\| _{ L^\Twostar }^\alpha +  \| v\| _{ L^\Twostar }^\alpha)  \| u-v \| _{ L^\Twostar },
\end{equation*} 
and~\eqref{fCs07} follows by applying~\eqref{fAl17}.
\end{proof} 

Next, given $\DI \in \dot H^2 (\R^N ) $, we set 
\begin{equation} \label{fAl33}
F(\DI, t)= \| e^{i \cdot \Delta } \Delta \DI\| _{ L^\gamma ((0,t), L^\rho ) } +  \| e^{i \cdot \Delta } [ |\DI|^\alpha \DI ] \| _{ L^\gamma ((0,t), L^\rho ) },
\end{equation} 
for  $0<t\le \infty $.  
We observe that, since $  | \DI|^\alpha \DI\in L^2 (\R^N ) $ by~\eqref{fAl24}, $F$ is well defined 
by~\eqref{fAl25}  and $F(\DI, t) \downarrow 0$ as $t\downarrow 0$. 
Moreover, it follows from~\eqref{fAl25} and~\eqref{fCs07} that the map $(\DI, t) \mapsto F(\DI, t)$  is continuous $\dot H^2 (\R^N ) \times (0, \infty ] \to (0,\infty )$. 
Therefore, if $E$ is a compact subset of $\dot H^2 (\R^N ) $, then
\begin{equation}  \label{fAl34}
\sup  _{ \DI\in E } F(\DI, t) \goto  _{ t\downarrow 0 }0. 
\end{equation} 
The next two lemmas are key ingredients in our proof of local existence and continuous dependence.

\begin{lem} \label{eNLem1} 
Let $T,M>0$, $\DI \in \dot H^2 (\R^N ) $, and let $\Yset _{ \DI, T, M } $ be as in Definition~$\ref{eSetY}$. If $u\in \Yset _{ \DI, T, M } $, then
\begin{multline}  \label{fAl49b1}
\| u \| _{ L^{(\alpha +1)\gamma } ((0,T), L^{(\alpha +1) \rho }) } ^{\alpha +1}  \le 
 \CSTu \| \Delta \DI \| _{ L^2 }^\alpha  F( \DI, T)  \\ 
  + \CSTu 
 ( \| u \| _{ L^\gamma ((0,T), L^\Rhostar )}^{\alpha +1}+ \| u_t \| _{ L^\gamma ((0,T), L^\rho )}^{\alpha +1} + F( \DI, T)^{\alpha +1} ), 
\end{multline} 
where $F $ is defined by~\eqref{fAl33} and the constant $\CSTu$ depends only on $N$.
\end{lem}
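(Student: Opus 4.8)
The plan is to estimate $\|u\|_{L^{(\alpha+1)\gamma}((0,T),L^{(\alpha+1)\rho})}^{\alpha+1}$ by first reducing it, via the interpolation inequality~\eqref{fAl16}, to a product of $\|u\|_{L^\infty((0,T),L^\Twostar)}^\alpha$ and $\|u\|_{L^\gamma((0,T),L^\Rhostar)}$, and then controlling the $L^\infty_t L^\Twostar_x$-norm of $u$ in terms of the data. The point is that $u \in \Yset_{\DI,T,M}$ gives us good control on $\Delta u$ in $L^\gamma_t L^\rho_x$ and on $u_t$ in $L^\gamma_t L^\rho_x$, but only weak a priori control on $u$ itself in space; so the first move is to write $u = e^{i\cdot\Delta}\DI + (u - e^{i\cdot\Delta}\DI)$ and treat the two pieces separately. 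For the linear part, $\|e^{it\Delta}\DI\|_{L^\Twostar} \le \CSob\|\Delta\DI\|_{L^2}$ by Sobolev~\eqref{fAl17} and the fact that $e^{it\Delta}$ is an isometry on $\dot H^2$; this contributes the $\|\Delta\DI\|_{L^2}^\alpha$ factor.

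For the nonlinear part $w := u - e^{i\cdot\Delta}\DI$, the idea is to use the Duhamel-type structure. Since $u(0) = \DI$ and (by Remark~\ref{eRem1}) $w \in W^{1,\gamma}((0,T),L^\rho)$ with $w(0)=0$, we can write $w(t) = \int_0^t w_t(s)\,ds$ in $L^\rho(\R^N)$, so $\|w(t)\|_{L^\rho} \le T^{1/\gamma'}\|w_t\|_{L^\gamma((0,T),L^\rho)}$; but this alone does not land us in $L^\Twostar$. Instead I would bound $\|w(t)\|_{L^\Twostar}$ using Sobolev's embedding once more — we need $\Delta w \in L^\infty_t L^2_x$ — which is where the real work lies. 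An alternative, cleaner route: observe that $\Delta u = \Delta e^{i\cdot\Delta}\DI + \Delta w$, and that $w$ solves $iw_t + \Delta w = \lambda|u|^\alpha u$ with $w(0)=0$, so $\Delta w = i w_t - \lambda|u|^\alpha u$; hence $\|\Delta w\|_{L^\gamma((0,T),L^\rho)} \le \|w_t\|_{L^\gamma((0,T),L^\rho)} + |\lambda|\,\||u|^\alpha u\|_{L^\gamma((0,T),L^\rho)}$. The nonlinear term on the right is estimated by~\eqref{fAl16} (it equals $\|u\|_{L^{(\alpha+1)\gamma}L^{(\alpha+1)\rho}}^{\alpha+1}$ after noting $(\alpha+1)\rho = \Rhostar$ is false in general — one must check the exponent bookkeeping carefully). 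The recurring device throughout will be: every appearance of an $L^\infty_t L^\Twostar_x$ or $L^\gamma_t L^\Rhostar_x$ norm of $u$ gets split as linear-plus-$w$, the linear piece absorbed into $F(\DI,T)$ and $\|\Delta\DI\|_{L^2}$ via Strichartz~\eqref{fAl25}, and the $w$-piece expressed through $u_t$ and $|u|^\alpha u$ via the equation, then re-estimated by~\eqref{fAl16} and~\eqref{fFN1v2}.

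Concretely, the order of steps I would follow is: (1) apply~\eqref{fAl16} to reduce the left-hand side to $\|u\|_{L^\infty L^\Twostar}^\alpha\,\|u\|_{L^\gamma L^\Rhostar}$; (2) for $\|u\|_{L^\infty L^\Twostar}$, split $u = e^{i\cdot\Delta}\DI + w$, bound the linear part by $\CSob\|\Delta\DI\|_{L^2}$ and bound $\|w\|_{L^\infty L^\Twostar}$ via Sobolev $\le \CSob\|\Delta w\|_{L^\infty L^2}$, then use that $w$ solves the Duhamel equation with zero data so that by Strichartz~\eqref{fAl26} (in the $(\infty,2)$ endpoint, with dual pair) $\|\Delta w\|_{L^\infty L^2} \le \CStr\|\Delta(|u|^\alpha u)\|_{L^{\gamma'}L^{\rho'}}$ — wait, one should instead directly invoke $\|w\|_{L^\infty((0,T),L^\Twostar)} \le \CSob\CStr\||u|^\alpha u\|_{L^2((0,T),L^{2N/(N-2)})}$ using the Duhamel formula for $\Delta w$ and~\eqref{fAl26}, then apply~\eqref{fFN1v2}; (3) similarly bound $\|u\|_{L^\gamma L^\Rhostar}$: split off the linear part $\le F(\DI,T)$-type term (note $\|e^{i\cdot\Delta}\DI\|_{L^\gamma L^\Rhostar} = \|\Delta e^{i\cdot\Delta}\DI\|_{L^\gamma L^\rho} \le \CStr\|\Delta\DI\|_{L^2}$, but we need it in terms of $F$; actually $\|u\|_{L^\gamma L^\Rhostar} \le \CSob\|\Delta u\|_{L^\gamma L^\rho} \le \CSob M$ directly from membership in $\Yset$, or more usefully $\le \CSob(\|e^{i\cdot\Delta}\Delta\DI\|_{L^\gamma L^\rho} + \|\Delta w\|_{L^\gamma L^\rho})$ with the $w$-term again via the equation $= \|iw_t - \lambda|u|^\alpha u\|_{L^\gamma L^\rho}$); (4) collect all terms, using Young's inequality $ab \le \epsilon a^{p} + C_\epsilon b^{p'}$ to convert the products of mixed powers of $\|\Delta\DI\|_{L^2}$, $\|u\|_{L^\gamma L^\Rhostar}$, $\|u_t\|_{L^\gamma L^\rho}$, and $F(\DI,T)$ into the additive form~\eqref{fAl49b1}. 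The main obstacle is Step (2)–(3): correctly exploiting the zero initial data of $w$ to gain the $L^\infty_t L^2_x$ (equivalently $L^\Twostar_x$) control of $\Delta w$ via Strichartz with the dual admissible pair, and then closing the resulting estimate for $\||u|^\alpha u\|$ — which involves $\|\Delta u\|_{L^\infty L^2}$ — without circularity; this is resolved precisely by~\eqref{fFN1v2}, which expresses $\||u|^\alpha u\|_{L^2 L^{2N/(N-2)}}$ in terms of $\|\Delta u\|_{L^\infty L^2}^{2/(N-4)}\|\Delta u\|_{L^\gamma L^\rho}^{(N-2)/(N-4)}$, and then splitting $\Delta u$ one last time and absorbing. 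The bookkeeping of Hölder exponents (checking $(\alpha+1)\gamma$, $(\alpha+1)\rho$ and the relations among $\rho,\gamma,\Rhostar,\Twostar,\beta$) is tedious but mechanical given the identities already recorded after~\eqref{fAl17}.
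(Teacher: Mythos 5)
Your overall architecture is right up to a point: the reduction via~\eqref{fAl16}, the splitting $u = e^{i\cdot\Delta}\DI + v$ with $v(0)=0$, and the identification of $\|v\|_{L^\infty((0,T),L^\Twostar)}$ as the crux all match the paper's proof. But your resolution of that crux has a genuine gap: in steps (2)--(3) you invoke the equation (``$w$ solves $iw_t+\Delta w = \lambda|u|^\alpha u$ with $w(0)=0$'', the Duhamel formula, Strichartz~\eqref{fAl26}). The lemma is stated for an \emph{arbitrary} element $u$ of $\Yset_{\DI,T,M}$, and Definition~\ref{eSetY} imposes no PDE on $u$ --- only the bounds on $\Delta u$ and $u_t$ in $L^\gamma L^\rho$, the condition $u - e^{i\cdot\Delta}\DI \in L^\gamma((0,T), L^\rho)$, and $u(0)=\DI$. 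The lemma is used precisely to show that the map $\Phi$ sends $\Yset_{\DI,T,M}$ into itself (see~\eqref{fAl38b11}), i.e.\ it must apply \emph{before} one knows $u$ is a solution; an argument presupposing the Duhamel identity for $u$ is circular at the level of the fixed-point scheme. Two further symptoms of the same problem: your route makes the constant depend on $\lambda$ and forces an absorption step requiring smallness (your step (3) reproduces $\|\,|u|^\alpha u\|_{L^\gamma L^\rho} = \|u\|_{L^{(\alpha+1)\gamma}L^{(\alpha+1)\rho}}^{\alpha+1}$, the very quantity being estimated), whereas the stated $\CSTu$ depends only on $N$ and the lemma assumes no smallness of $M$ or $T$.

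The missing device is elementary and uses only the information in Definition~\ref{eSetY}. You came within a hair of it when you wrote $w(t)=\int_0^t w_t(s)\,ds$ in $L^\rho$ and observed that ``this alone does not land us in $L^\Twostar$'': the paper applies the same fundamental-theorem-of-calculus idea to $|v|^{\gamma-1}v$ rather than to $v$ itself. Since $|\partial_t[\,|v|^{\gamma-1}v]| \le \gamma|v|^{\gamma-1}|v_t|$ and $\frac{N-2}{N} = \frac{\gamma-1}{\Rhostar}+\frac{1}{\rho}$, H\"older's inequality in space and time gives $\|v(t)\|_{L^\Twostar}^\gamma = \bigl\|\int_0^t \partial_s[\,|v|^{\gamma-1}v]\,ds\bigr\|_{L^{N/(N-2)}} \le \gamma\,\|v\|_{L^\gamma((0,T),L^\Rhostar)}^{\gamma-1}\|v_t\|_{L^\gamma((0,T),L^\rho)}$, with no Strichartz estimate, no equation and no $\lambda$. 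Combining this with~\eqref{fAl16} applied to $v$, the bounds $\|v\|_{L^\gamma L^\Rhostar} \le \|u\|_{L^\gamma L^\Rhostar} + \CSob F(\DI,T)$ and $\|v_t\|_{L^\gamma L^\rho} \le \|u_t\|_{L^\gamma L^\rho} + F(\DI,T)$, and the elementary inequality $(x+y)^{\alpha+1}\le 2^\alpha(x^{\alpha+1}+y^{\alpha+1})$ together with Young's inequality, then yields~\eqref{fAl49b1} directly.
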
 

\begin{proof} 
We follow essentially the proof of~\cite[Lemma~5.6]{CazenaveW}. The main difference is that we use the auxiliary function
\begin{equation} \label{fAl40b1}
v(t)= u(t) - e^{it\Delta }\DI .
\end{equation} 
Observe that by the definition of $\Yset _{ \DI, T, M } $ and~\eqref{fAl25},
$v \in L^\gamma ((0,T),  H^{2,\rho })$, $v_t \in L^\gamma ((0,T), L^{\rho })$, 
and $v(0) = 0$. 
The crux for estimating $v$ is the property $v(0)=0$; and $e^{it\Delta }\DI $ is estimated by Strichartz's estimate.
We deduce from~\eqref{fAl16} (applied with $u=  e^{i \cdot  \Delta } \DI$), 
\eqref{fAl17} and~\eqref{fAl33} that
\begin{equation} \label{fAl39b1}
\begin{split} 
 \| e^{i \cdot  \Delta } \DI \| _{ L^{(\alpha +1)\gamma }((0,T) , L^{(\alpha +1) \rho })}^{\alpha +1}  & \le  \CSob^{\alpha +1} \| \Delta \DI \| _{ L^2 }^\alpha  \| e^{i \cdot  \Delta } \Delta  \DI  \| _{ L^{ \gamma }((0,T) , L^{  \rho  })}  \\ &  \le  \CSob^{\alpha +1} \| \Delta \DI \| _{ L^2 }^\alpha F( \DI, T)  .
\end{split} 
 \end{equation}
 Next,  it follows from~\eqref{fAl16} (applied with $u=  v$) that
\begin{equation} \label{fAl42b1}
 \| v \| _{ L^{(\alpha +1)\gamma } ((0,T), L^{(\alpha +1) \rho }) } ^{\alpha +1}  
  \le     \|  v \| _{ L^{ \gamma }((0,T) , L^{  \Rhostar })}  
 \| v \| _{ L^{\infty  }((0,T) , L^{ \Twostar })}^\alpha .
\end{equation} 
We now estimate $\| v \| _{ L^{\infty  }((0,T) , L^{ \Twostar })}$. 
Since $v(0)= 0$ and $ |\partial _t [ |v|^{\gamma -1} v]|\le \gamma   |v|^{\gamma -1}   |v_t|$, we see that
\begin{equation} \label{fAl44b1}
\begin{split} 
 \| v(t) \| _{ L^{ \Twostar } }^\gamma &= 
  \| \, |v(t)|^{\gamma -1}v(t) \|  _{ L^{ \frac {N} {N-2}} } \\ & = 
  \Bigl\|  \int _0^t \partial _s [ |v(s) |^{\gamma -1} v(s) ]\, ds \Bigr\| _{ L^{ \frac {N} {N-2}} } \\ & 
 \le \gamma  \int _0^T   \|\,  |v|^{\gamma -1}  |v_s|\,\| _{ L^{  \frac {N} {N-2} } } ds.
\end{split} 
\end{equation} 
Since $ \frac {N-2} {N} = \frac {\gamma -1 } { \Rhostar }+ \frac {1} {\rho }$, 
it follows from~\eqref{fAl44b1} and H\"older's inequality in space and in time that
\begin{equation*}
\| v(t) \| _{ L^{ \Twostar } }^\gamma  \le \gamma  \int _0^T  \|v \| _{ L^{ \Rhostar } }^{\gamma -1}   \| v_t \| _{ L^\rho  } dt 
\le \gamma   \| v \|_{ L^\gamma ((0,T), L^{ \Rhostar } ) }^{\gamma -1}  \| v_t \| _{ L^\gamma ((0,T), L^\rho  )},
\end{equation*} 
for all $0<t<T$, so that
\begin{equation}  \label{fAl47b1}
\| v\| _{ L^\infty ((0,T), L^{ \Twostar }) }^\alpha   \le \gamma ^{\frac {2 } {N-2 }}
   \| v \|_{ L^\gamma ((0,T), L^{\Rhostar } ) }^{ \frac {2N} {(N-2)(N-4) }} \| v_t \| _{ L^\gamma ((0,T), L^\rho  )}^{\frac {2 } {N-2 }}  .
\end{equation} 
It follows from~\eqref{fAl42b1} and~\eqref{fAl47b1}  that
\begin{equation} \label{fAl48b1}
 \| v \| _{ L^{(\alpha +1)\gamma } ((0,T), L^{(\alpha +1) \rho }) } ^{\alpha +1}  
 \le \gamma ^{\frac {2 } {N-2 }}
  \|  v \|_{ L^\gamma ((0,T), L^{\Rhostar } ) }^{1+  \frac {2N} {(N-2)(N-4) }} 
   \| v_t \| _{ L^\gamma ((0,T), L^\rho  )}^{\frac {2 } {N-2 }}  .
\end{equation} 
Observe that by~\eqref{fAl40b1} and~\eqref{fAl17}
\begin{equation}  \label{fAl41b3}
\begin{split} 
  \|  v \|_{ L^\gamma ((0,T), L^{\Rhostar } ) } & \le 
   \|  u \|_{ L^\gamma ((0,T), L^{\Rhostar } ) }+  \CSob \|  e^{i \cdot \Delta } \Delta  \DI \|_{ L^\gamma ((0,T), L^{\rho } ) } \\ & \le  \|  u \|_{ L^\gamma ((0,T), L^{\Rhostar } ) } + \CSob F( \DI, T), 
\end{split} 
\end{equation} 
and
\begin{equation}  \label{fAl41b4}
\begin{split} 
  \| v_t \|_{ L^\gamma ((0,T), L^{\rho } ) } & \le 
   \| u_t \|_{ L^\gamma ((0,T), L^{\rho } ) }+  \|  e^{i \cdot \Delta } \Delta  \DI \|_{ L^\gamma ((0,T), L^{\rho } ) } \\ & \le   \| u_t \|_{ L^\gamma ((0,T), L^{\rho } ) } + F( \DI, T) .
\end{split} 
\end{equation} 
Note also that by~\eqref{fAl40b1}
\begin{multline} \label{fAl41b7}
 \| u \| _{ L^{(\alpha +1)\gamma } ((0,T), L^{(\alpha +1) \rho }) } ^{\alpha +1} \\ \le 
 2^\alpha (  \| e^{i \cdot \Delta } \DI  \| _{ L^{(\alpha +1)\gamma } ((0,T), L^{(\alpha +1) \rho }) } ^{\alpha +1} +  \| v \| _{ L^{(\alpha +1)\gamma } ((0,T), L^{(\alpha +1) \rho }) } ^{\alpha +1} ).
\end{multline} 
Estimate~\eqref{fAl49b1} follows from~\eqref{fAl41b7}, \eqref{fAl39b1}, \eqref{fAl48b1}, \eqref{fAl41b3}, \eqref{fAl41b4} and the elementary inequality $(x+y)^{\alpha +1}\le 2^\alpha (x^{\alpha +1} + y^{\alpha +1})$. (Note that the various constants $\alpha $, $\CSob$, $\gamma $ only depend on $N$, so that $\CSTu$ also only depends on $N$.)
\end{proof} 

\begin{lem} \label{eNLem2} 
Given $T,M>0$ and $\DI, \DIb \in \dot H^2 (\R^N ) $, the following properties hold.
\begin{enumerate}[{\rm (i)}] 
\item \label{eNLem2:i}   If $u\in \Yset _{ \DI, T, M } $, then $ |u|^\alpha u \in C([0,T], L^2 (\R^N ) )$. 

\item \label{eNLem2:ii} If $u\in \Yset _{ \DI, T, M } $ and $v\in \Yset _{ \DIb, T, M } $, then
\begin{multline} \label{eNLem2:1} 
 \| \, |u|^\alpha u - |v|^\alpha v \| _{ L^\infty ((0,T), L^2 ) } \le \CSTub 
 \Bigl[   ( \| \DIb \| _{ L^\Twostar }^{\alpha +1} + \| \DI \| _{ L^\Twostar }^{\alpha +1} )  \| \DIb - \DI \| _{ L^\Twostar } \\ + M^{\alpha +1} \| v_t - u_t \| _{ L^\gamma ((0,T), L^\rho )} 
 +  M^{\alpha +1}  \| v -u \| _{ L^\gamma ((0,T), L^\Rhostar )}   \Bigr]^{\frac {\alpha +1} {\alpha +2}},
\end{multline} 
where the constant $\CSTub$ depends only on $N$.
\end{enumerate} 
\end{lem}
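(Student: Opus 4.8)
The plan is to carry one extra power throughout: instead of $|u|^\alpha u$ we work with $|u|^{\alpha+1}u$, which by Lemma~\ref{eElem2v2} (applied with $a=\alpha+1$) can be differentiated in time, and we recover $|u|^\alpha u$ at the end by the elementary inequality~\eqref{fElem1} (applied with $a=\alpha$), which at the cost of the exponent $\tfrac{\alpha+1}{\alpha+2}$ gives $\| |u|^\alpha u-|v|^\alpha v\|_{L^2}\le C(\alpha)\| |u|^{\alpha+1}u-|v|^{\alpha+1}v\|_{L^{N/(N-2)}}^{(\alpha+1)/(\alpha+2)}$ (here one uses $\tfrac{2(\alpha+1)}{\alpha+2}=\tfrac N{N-2}$, valid for the present value of $\alpha$). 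Initial data then enter only through the boundary term of a time integration, exactly as the vanishing of $v(0)$ entered the proof of Lemma~\ref{eNLem1}. Note that throughout, the H\"older bookkeeping is painless because $\gamma=\alpha+2$, so every space exponent that arises is one of $\Twostar,\Rhostar,\rho$ and every time exponent collapses to $\gamma$.

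I would first dispose of assertion~\eqref{eNLem2:i}, and in fact prove the stronger fact that $|u|^{\alpha+1}u\in C([0,T],L^{N/(N-2)}(\R^N))$ for every $u\in\Yset_{\DI,T,M}$. The hypotheses of Lemma~\ref{eElem2v2} with $a=\alpha+1$ hold for such a $u$ (take $q_1=q_2=\gamma$, $r_1=\Rhostar$, $r_2=\rho$, using $\gamma=\alpha+2$, $\Rhostar\ge\alpha+2$, $\tfrac{\alpha+1}{\gamma}+\tfrac1\gamma=1$, $\tfrac{\alpha+1}{\Rhostar}+\tfrac1\rho=\tfrac{N-2}N$), so
\[
\partial_t(|u|^{\alpha+1}u)=\tfrac{\alpha+3}2\,|u|^{\alpha+1}u_t+\tfrac{\alpha+1}2\,|u|^{\alpha-1}u^2\overline{u_t},
\]
and from $|\partial_t(|u|^{\alpha+1}u)|\le(\alpha+2)|u|^{\alpha+1}|u_t|$, H\"older in space (the $\alpha+1$ powers of $u$ into $L^p$ with $(\alpha+1)p=\Rhostar$, paired with $u_t\in L^{\rho}$, $\tfrac1p+\tfrac1\rho=\tfrac{N-2}N$) and H\"older in time one gets $\partial_t(|u|^{\alpha+1}u)\in L^1((0,T),L^{N/(N-2)})$, with norm $\le(\alpha+2)\|u\|_{L^\gamma((0,T),L^{\Rhostar})}^{\alpha+1}\|u_t\|_{L^\gamma((0,T),L^{\rho})}$. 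Since moreover $|u|^{\alpha+1}u\in L^1((0,T),L^{N/(N-4)})\subset L^1_{\Loc}((0,T)\times\R^N)$ and $u(0)=\DI$ in $L^{\rho}+L^{\Twostar}$ (Remark~\ref{eRem1}), integrating in time identifies $|u|^{\alpha+1}u$, up to a null set, with an element of $C([0,T],L^{N/(N-2)})$ taking the value $|\DI|^{\alpha+1}\DI$ at $t=0$. Assertion~\eqref{eNLem2:i} then follows from the displayed consequence of~\eqref{fElem1}, since $\| |u|^\alpha u\|_{L^2}=\|u\|_{L^{\Twostar}}^{\alpha+1}$ and $\| |u|^{\alpha+1}u\|_{L^{N/(N-2)}}=\|u\|_{L^{\Twostar}}^{\alpha+2}$.

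For~\eqref{eNLem2:ii}, by~\eqref{fElem1} it suffices to bound $\| |u|^{\alpha+1}u-|v|^{\alpha+1}v\|_{L^\infty((0,T),L^{N/(N-2)})}$ by the bracket appearing in~\eqref{eNLem2:1}. By the previous paragraph applied to $u$ and to $v$, both $|u|^{\alpha+1}u$ and $|v|^{\alpha+1}v$ lie in $W^{1,1}((0,T),L^{N/(N-2)})$, hence
\[
|u(t)|^{\alpha+1}u(t)-|v(t)|^{\alpha+1}v(t)=|\DI|^{\alpha+1}\DI-|\DIb|^{\alpha+1}\DIb+\int_0^t\Bigl[\partial_s(|u|^{\alpha+1}u)-\partial_s(|v|^{\alpha+1}v)\Bigr]\,ds .
\]
The boundary term is controlled in $L^{N/(N-2)}$ by $C(\|\DI\|_{L^{\Twostar}}^{\alpha+1}+\|\DIb\|_{L^{\Twostar}}^{\alpha+1})\|\DI-\DIb\|_{L^{\Twostar}}$ via~\eqref{fSPL1} with $a=\alpha+1$ and H\"older in space ($\tfrac{\alpha+2}{\Twostar}=\tfrac{N-2}N$). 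For the integrand, I would insert the formula of Lemma~\ref{eElem2v2} for $\partial_s(|u|^{\alpha+1}u)$ and $\partial_s(|v|^{\alpha+1}v)$, write each resulting difference in telescoping form, e.g.\ $|u|^{\alpha+1}u_s-|v|^{\alpha+1}v_s=|u|^{\alpha+1}(u_s-v_s)+\bigl(|u|^{\alpha+1}-|v|^{\alpha+1}\bigr)v_s$ and analogously for the term with $|u|^{\alpha-1}u^2\overline{u_s}$, and bound $\bigl| |u|^{\alpha+1}-|v|^{\alpha+1} \bigr|$ and $\bigl| |u|^{\alpha-1}u^2-|v|^{\alpha-1}v^2 \bigr|$ by $C(|u|^\alpha+|v|^\alpha)|u-v|$ using~\eqref{feRemBSt} with $a=\alpha+1\ge1$. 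Then H\"older in space (all $\alpha+1$ powers into $L^p$ with $(\alpha+1)p=\Rhostar$; the $\alpha$ powers into $L^{\Rhostar/\alpha}$; the factor $|u-v|$ into $L^{\Rhostar}$; each of $u_s,v_s$ into $L^{\rho}$) and H\"older in time (every exponent equal to $\gamma$), together with $\|u\|_{L^\gamma((0,T),L^{\Rhostar})},\|v\|_{L^\gamma((0,T),L^{\Rhostar})}\le\CSob M$ (by~\eqref{fAl17}) and $\|u_t\|_{L^\gamma((0,T),L^{\rho})},\|v_t\|_{L^\gamma((0,T),L^{\rho})}\le M$, yield
\[
\int_0^T\bigl\|\partial_s(|u|^{\alpha+1}u)-\partial_s(|v|^{\alpha+1}v)\bigr\|_{L^{N/(N-2)}}\,ds\le C(N)\,M^{\alpha+1}\bigl(\|u_t-v_t\|_{L^\gamma((0,T),L^{\rho})}+\|u-v\|_{L^\gamma((0,T),L^{\Rhostar})}\bigr).
\]
Adding the two estimates, taking the supremum over $t\in[0,T]$, and raising to the power $\tfrac{\alpha+1}{\alpha+2}$ (via~\eqref{fElem1}) gives~\eqref{eNLem2:1}, with $\CSTub$ depending only on $N$ through $\alpha,\gamma,\CSob$.

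The computations above are routine H\"older bookkeeping, and the conceptual input is only the $a=\alpha+1$ case of Lemma~\ref{eElem2v2} together with the descent~\eqref{fElem1}. The one step that genuinely requires care is the time integration itself: a priori $|u|^{\alpha+1}u$ lies only in $L^1((0,T),L^{N/(N-4)})$ while $\partial_t(|u|^{\alpha+1}u)$ lies in $L^1((0,T),L^{N/(N-2)})$, so one must reconcile a ``spatial'' and a ``temporal'' integrability and then identify the trace at $t=0$ using only the fact that $u(0)=\DI$ in $L^{\rho}+L^{\Twostar}$; this is of the same nature as the regularization argument behind Lemma~\ref{eElem2v2}, and is already used implicitly in the proof of Lemma~\ref{eNLem1} at~\eqref{fAl44b1}. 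I would invoke it rather than redo it.
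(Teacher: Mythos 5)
Your proposal is correct and follows essentially the same route as the paper: work with $w=|u|^{\alpha+1}u$, differentiate it via Lemma~\ref{eElem2v2} with $a=\alpha+1$, telescope with~\eqref{feRemBSt}, integrate in time, and descend to $|u|^\alpha u$ through~\eqref{fElem1} at the cost of the exponent $\frac{\alpha+1}{\alpha+2}$. The one step you defer — reconciling $w\in L^1((0,T),L^{N/(N-4)})$ with $w_t\in L^1((0,T),L^{N/(N-2)})$ and identifying $w(0)=|\DI|^{\alpha+1}\DI$ — is exactly the point the paper makes explicit, via $w\in W^{1,1}((0,T),L^{N/(N-4)}+L^{N/(N-2)})\hookrightarrow C([0,T],L^{N/(N-4)}+L^{N/(N-2)})$ and an a.e.\ subsequence extraction using $u(t)\to\DI$ in $L^\rho+L^\Twostar$.
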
 

\begin{rem} \label{eRem2} 
Note that  $  \|\,  |u|^\alpha u\| _{ L^2 } =  \| u \| _{ L^\Twostar }^{\alpha +1}$. Therefore, estimate~\eqref{eNLem2:1} with $v=0$, $\DIb=0$ implies that
\begin{equation*} 
 \|  u   \| _{ L^\infty ((0,T), L^\Twostar ) }   \le \CSTub ^{\frac {1} {\alpha +1}}
 \Bigl[     \| \DI \| _{ L^\Twostar }^{\alpha +2}   + M^{\alpha +1}  ( \|  u_t \| _{ L^\gamma ((0,T), L^\rho )} 
 +   \|  u \| _{ L^\gamma ((0,T), L^\Rhostar )}  ) \Bigr]^{\frac {1} {\alpha +2}},
\end{equation*} 
so that
\begin{equation} \label{eNLem2:1b1} 
 \|  u   \| _{ L^\infty ((0,T), L^\Twostar ) }  \le \CSTub ^{\frac {1} {\alpha +1}}
 \Bigl[  \CSob^{\alpha +2}   \|\Delta  \DI \| _{ L^2 }^{\alpha +2}     + (1+\CSob ) M ^{\alpha +2}  \Bigr]^{\frac {1} {\alpha +2}},
\end{equation} 
for all $u\in \Yset _{ \DI, T, M } $.
\end{rem} 

\begin{proof} [Proof of Lemma~$\ref{eNLem2}$]
We first prove Property~\eqref{eNLem2:i}.  
Note that  ${\frac {\Twostar } {\gamma }} = \frac {2(\alpha +1)} {\alpha +2}$, so that
 $ |u|^\alpha u \in C( [0,T] , L^2 (\R^N ) )$ if and only if $ w \in C( [0,T], L^{\frac {\Twostar } {\gamma }} (\R^N ) ) $, where $w=  |u|^{\alpha +1}u$.
Note that 
\begin{equation} \label{fTZ10} 
 \| w \| _{ L^1((0,T), L^{\frac {\Rhostar} {\gamma }}) }=  \| u \|^{\alpha +2} _{  L^\gamma ((0,T), L^\Rhostar) }\le \CSob ^{\alpha +2} M^{\alpha +2}. 
\end{equation} 
Moreover, 
$\frac {\gamma } {\Twostar} = \frac {\alpha +1} {\Rhostar } + \frac {1} {\rho }$,
$ 1= \frac {\alpha +1} {\gamma } + \frac {1} {\gamma }$
and $ |w_t| \le (\alpha +2)  |u|^{\alpha +1}  |u_t|$, 
so that by H\"older's inequality in space and time
\begin{equation} \label{eNLem2:2} 
\begin{split} 
 \|w_t \| _{ L^1((0,T), L^{\frac {\Twostar} {\gamma }} )}&  \le (\alpha +2)  \| u \| _{L^\gamma ((0,T),  L^{\Rhostar} )}^{\alpha +1}  \| u_t \| _{ L^\gamma ((0,T), L^\rho ) } \\ &  \le (\alpha +2) \CSob ^{\alpha +1} M^{\alpha +2}. 
\end{split} 
\end{equation} 
Note that estimates~\eqref{fTZ10} and~\eqref{eNLem2:2}  alone do not imply  $w\in C([0,T], L^{\frac {\Twostar} {\gamma }} (\R^N ) )$. (Let for example $w(t)\equiv w_0$ with $w_0\in  L^{\frac {\Rhostar} {\gamma }}  \setminus L^{\frac {\Twostar} {\gamma }} $.) We use the property $u(0)= \DI$ to complete the proof of~\eqref{eNLem2:i}.  
Let $X= L^{\frac {\Rhostar} {\gamma }} (\R^N ) +L^{\frac {\Twostar} {\gamma }} (\R^N ) $. It follows from~\eqref{fTZ10} and~\eqref{eNLem2:2} that $w\in W^{1,1}((0,T), X) \hookrightarrow C([0,T], X)$. 
In particular, there exists a sequence $t_n\downarrow 0$ such that $w(t_n) \to w(0) $ a.e.  on $\R^N $. 
On the other hand, $u(t) \to \DI $  in $L^\rho  (\R^N ) +L^\Twostar (\R^N )$ as $t \downarrow 0$, by Remark~\ref{eRem1}~\eqref{eRem1:1}. Therefore, by possibly extracting a subsequence, we deduce that $u(t_n) \to \DI$ a.e.  on $\R^N $. Thus $w(t_n) \to |\DI|^{\alpha +1}\DI$ a.e.  on $\R^N $ and we conclude that $w(0)=  |\DI|^{\alpha +1}\DI$. Since $\DI\in L^{2(\alpha +1)} (\R^N ) $ by~\eqref{fAl24}, we conclude that $w(0) \in  L^{\frac {\Twostar} {\gamma }} (\R^N ) $. 
We now write
\begin{equation*} 
w(t)= w(0) + \int _0^t w_t(s)\,ds.
\end{equation*} 
Since $w_t\in L^1((0,T), L^{\frac {\Twostar} {\gamma }} (\R^N ))$ by~\eqref{eNLem2:2}, we see that $w\in  C([0,T], L^{\frac {\Twostar} {\gamma }} (\R^N ))$, which proves Property~\eqref{eNLem2:i}.  

Let now $u,v$ be as in~\eqref{eNLem2:ii}. It follows in particular from~\eqref{eNLem2:i} that 
$ |u|^{\alpha +1} u,  |v|^{\alpha +1} v \in  C( [0,T] , L^{\frac {\Twostar } {\gamma }} (\R^N ) )$.
Applying~\eqref{eElem2v2:1} with $a= \alpha +1$ to both $u $ and $v$, we obtain
\begin{multline*}
\partial _t ( |v|^{\alpha +1} v  -  |u|^{\alpha +1} u)= \frac {\alpha +3} {2}  |v|^{\alpha +1} (v  _t  -u_t) + \frac {\alpha +3} {2} ( |v|^{\alpha +1} -|u|^{\alpha +1})) u_t \\ + \frac {\alpha +1} {2}  |v|^{\alpha -1} v^2 (  \overline{v_t} -  \overline{u}_t) + \frac {\alpha +1} {2} ( |v|^{\alpha -1} v^2-  |u|^{\alpha -1}u^2  ) \overline{u} _t.
\end{multline*} 
Using~\eqref{feRemBSt} with $a =\alpha +1$, we deduce that there exists a constant $C$ depending only on $N$ such that
\begin{multline*}
 |\partial _t ( |v|^{\alpha +1} v  -  |u|^{\alpha +1} u)| \le C ( |v|^{\alpha +1} +  |u|^{\alpha +1})  |v_t -u_t| \\+ C ( |v|^\alpha + |u|^\alpha ) |u_t| \,  |v -u|. 
\end{multline*} 
Applying H\"older's inequality in space and in time, it follows that
\begin{multline*}
 \| \partial _t ( |v|^{\alpha +1} v  -  |u|^{\alpha +1} u) \| _{ L^1((0,T), L^{\frac {\Twostar } {\gamma }} )} \\ \le C (  \| v \| _{ L^\gamma ((0,T), L^\Rhostar )}^{\alpha +1} + \| u \| _{ L^\gamma ((0,T), L^\Rhostar )}^{\alpha +1}) \| v_t - u_t \| _{ L^\gamma ((0,T), L^\rho )} \\
 +C (  \| v \| _{ L^\gamma ((0,T), L^\Rhostar )}^{\alpha } + \| u \| _{ L^\gamma ((0,T), L^\Rhostar )}^{\alpha })  \| v -u \| _{ L^\gamma ((0,T), L^\Rhostar )}  \| u_t \| _{ L^\gamma ((0,T), L^\rho )}.
\end{multline*} 
We deduce by using~\eqref{fAl17} that
\begin{multline*}
 \| \partial _t ( |v|^{\alpha +1} v  -  |u|^{\alpha +1} u) \| _{ L^1((0,T), L^{\frac {\Twostar } {\gamma }})} \\ \le 2C (\CSob M)^{\alpha +1} \| v_t - u_t \| _{ L^\gamma ((0,T), L^\rho )} 
 +2C \CSob^\alpha  M^{\alpha +1}  \| v -u \| _{ L^\gamma ((0,T), L^\Rhostar )}  ,
\end{multline*} 
so that
\begin{multline} \label{eNLem2:8} 
 \|  |v|^{\alpha +1} v  -  |u|^{\alpha +1} u \| _{ L^\infty ((0,T), L^{\frac {\Twostar } {\gamma }} )} 
 \le   \|  \,|\DIb |^{\alpha +1} \DIb  -  |\DI |^{\alpha +1} \DI \| _{L^{\frac {2(\alpha +1)} {\alpha +2} }}\\ + 2C (\CSob M)^{\alpha +1} \| v_t - u_t \| _{ L^\gamma ((0,T), L^\rho )} 
 +2C \CSob^\alpha  M^{\alpha +1}  \| v -u \| _{ L^\gamma ((0,T), L^\Rhostar )}  .
\end{multline} 
Finally, by~\eqref{fSPL1} and H\"older's inequality
\begin{equation}  \label{eNLem2:9} 
  \|  \,|\DIb |^{\alpha +1} \DIb  -  |\DI |^{\alpha +1} \DI \| _{L^{\frac {\Twostar } {\gamma }}}  
  \le (\alpha +2) ( \| \DIb \| _{ L^\Twostar }^{\alpha +1} + \| \DI \| _{ L^\Twostar }^{\alpha +1})  \| \DIb - \DI \| _{ L^\Twostar }  .
\end{equation} 
Since 
\begin{equation*} 
 \| \,  |u|^\alpha u-  |v|^\alpha v\| _{ L^\infty ((0,T), L^2 ) } \le 2^{\frac {\alpha +3} {\alpha +2}}
  \|  |v|^{\alpha +1} v  -  |u|^{\alpha +1} u \| _{ L^\infty ((0,T), L^{\frac {\Twostar } {\gamma }})} ^{\frac {\alpha +1} {\alpha +2}} ,
\end{equation*} 
by~\eqref{fElem1}, estimate~\eqref{eNLem2:1}  follows from~\eqref{eNLem2:8} and~\eqref{eNLem2:9}. 
\end{proof} 

\section{Unconditional uniqueness} \label{sUU}
In this section, we prove unconditional uniqueness in $C([0,T], \dot H^2 (\R^N ) )$ for equation~\eqref{NLS}. 

\begin{prop} \label{eUncUni} 
Let $T>0$, $\DI\in \dot H^2 (\R^N ) $ and suppose $u^1,u^2 \in C([0,T], \dot H^2 (\R^N ) )$ are two solutions of~\eqref{NLSI}. It follows that $u^1=u^2$. 
\end{prop}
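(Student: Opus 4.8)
The plan is to prove unconditional uniqueness by a truncation-in-time argument combined with the nonlinear estimates established in Section~\ref{sPrelim}, in particular~\eqref{fCs07}. The key difficulty is that since we work in the \emph{critical} space $\dot H^2$, the quantity $\|\Delta u^j\|_{L^\infty((0,T),L^2)}$ does not become small merely by shrinking the time interval, so a direct contraction on the full $\dot H^2$ norm of the difference fails. The remedy, as in~\cite[Proposition~4.2.5]{CLN}, is to estimate the difference $w=u^1-u^2$ not in $\dot H^2$ but in a subcritical Strichartz norm, e.g.\ $L^\gamma((0,t),L^\Rhostar)$ or $L^\gamma((0,t),L^\rho)$, where the a priori regularity $u^j\in C([0,T],\dot H^2)\hookrightarrow C([0,T],L^\Twostar)$ can be exploited together with the fact that such norms \emph{do} tend to $0$ as $t\downarrow 0$.

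First I would fix $u^1,u^2\in C([0,T],\dot H^2(\R^N))$ and set $w=u^1-u^2$, so that from~\eqref{NLSI},
\begin{equation*}
w(t)=-i\lambda\int_0^t e^{i(t-s)\Delta}\bigl(|u^1(s)|^\alpha u^1(s)-|u^2(s)|^\alpha u^2(s)\bigr)\,ds .
\end{equation*}
Next I would apply the Strichartz estimate~\eqref{fAl26} with the admissible pair $(\gamma,\rho)$ on the left and the dual admissible pair $(\gamma',\rho')$ on the right, and control the nonlinearity by~\eqref{fSP11}. Using Sobolev's embedding $\dot H^2\hookrightarrow L^\Twostar\hookrightarrow$ (interpolated with mass, if needed) to place $u^1,u^2$ in $L^\gamma((0,t),L^\Rhostar)$ — or more simply the uniform bound $\|u^j(s)\|_{L^\Rhostar}\le C\|\Delta u^j(s)\|_{L^\rho}$ is not available, so I would instead use that $u^j\in C([0,T],L^\Twostar)$ and interpolate to get $u^j\in L^{(\alpha+1)\gamma}((0,t),L^{(\alpha+1)\rho})$ via~\eqref{fAl16} applied on short intervals, whose norm over $(0,t)$ is finite and, crucially, $\to 0$ as $t\downarrow 0$ by dominated convergence (the integrand is in $L^1_t$). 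This yields, for $t\le T$,
\begin{equation*}
\|w\|_{L^\gamma((0,t),L^\rho)}\le C\bigl(\|u^1\|_{L^{(\alpha+1)\gamma}((0,t),L^{(\alpha+1)\rho})}^\alpha+\|u^2\|_{L^{(\alpha+1)\gamma}((0,t),L^{(\alpha+1)\rho})}^\alpha\bigr)\|w\|_{L^\gamma((0,t),L^\rho)} .
\end{equation*}

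Then I would choose $t_0>0$ small enough that the bracketed prefactor is $\le\frac12$, which forces $\|w\|_{L^\gamma((0,t_0),L^\rho)}=0$, hence $u^1=u^2$ on $[0,t_0]$. Finally I would propagate this to all of $[0,T]$ by a standard continuation/connectedness argument: the set $\{t\in[0,T]:u^1=u^2 \text{ on }[0,t]\}$ is nonempty, closed (by continuity into $\dot H^2$, hence into $L^\Twostar$), and open in $[0,T]$ (repeating the above argument starting from any point $t_*$ where the solutions agree, using time-translation invariance of Strichartz estimates and of the integral equation), so it equals $[0,T]$. The main obstacle is the first step — verifying that the relevant mixed-norm quantities involving $u^1,u^2$ are finite and tend to $0$ with $t$, since this is exactly where criticality bites; once this smallness is in hand, the contraction and continuation are routine.
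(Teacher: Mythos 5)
Your argument has a genuine gap at its central step. The estimate~\eqref{fSP11} (or any variant of it) requires placing $u^1,u^2$ in $L^\gamma((0,t),L^\Rhostar)$, and your fallback via~\eqref{fAl16} requires $u^j\in L^{(\alpha+1)\gamma}((0,t),L^{(\alpha+1)\rho})$; but neither norm is finite a priori for a solution that is merely in $C([0,T],\dot H^2(\R^N))$. Indeed, Sobolev's embedding~\eqref{fAl17} only gives $u^j\in C([0,T],L^\Twostar(\R^N))$, and both $\Rhostar=\frac{2N(N-2)}{(N-4)^2}$ and $(\alpha+1)\rho=\frac{2N^2(N-2)}{(N-4)(N^2-4N+8)}$ are \emph{strictly larger} than $\Twostar=\frac{2N}{N-4}$, so no interpolation with the data at hand produces them. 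Worse, the route you propose is circular: the right-hand side of~\eqref{fAl16} itself contains $\|u\|_{L^\gamma((0,t),L^\Rhostar)}$, which is controlled (via~\eqref{fAl17}) only by $\|\Delta u\|_{L^\gamma((0,t),L^\rho)}$ --- i.e.\ precisely the auxiliary Strichartz regularity that \emph{unconditional} uniqueness forbids you to assume. The subsequent claim that these norms tend to $0$ as $t\downarrow 0$ by dominated convergence presupposes their finiteness on $(0,T)$, so it cannot rescue the argument. You correctly identified that criticality is ``where the argument bites,'' but the smallness-in-time device you propose does not resolve it.

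The paper's proof gets around this by never asking $u^1,u^2$ to lie in any space beyond what $C([0,T],\dot H^2)$ provides. It bounds the difference of nonlinearities pointwise by $f\,|u^1-u^2|$ with $f=(\alpha+1)(|u^1|^\alpha+|u^2|^\alpha)\in C([0,T],L^{N/2}(\R^N))$ --- note $\Twostar/\alpha=N/2$, so this \emph{is} controlled by the hypothesis --- and then truncates $f$ \emph{in amplitude}, $f_R=\min\{f,R\}$, not in time. The tail $f^R=f-f_R$ is small in $L^\infty((0,T),L^{N/2})$ as $R\to\infty$ (by continuity and dominated convergence) and pairs with $\|u^1-u^2\|_{L^2((0,\tau),L^{2N/(N-2)})}$; the bounded part lies in $L^\infty((0,T),L^N)$ and gains the factor $\tau^{1/2}$ against $\|u^1-u^2\|_{L^\infty((0,\tau),L^2)}$. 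The finiteness of these admissible norms of the difference is obtained beforehand from the Duhamel formula, the fact that $|u^j|^\alpha u^j\in C([0,T],L^2(\R^N))$ by~\eqref{fAl24}, and Strichartz's estimate~\eqref{fAl26}. Choosing first $R$ large and then $\tau$ small closes the contraction. Your continuation step at the end is fine, but to repair the proof you need to replace the core estimate by an amplitude-truncation argument of this kind.
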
 

\begin{proof} 
The proof is an obvious adaptation of the proof of Proposition~4.2.5 in~\cite{CLN}.
Note first that, by Sobolev's embedding, 
 $ |u^j|^\alpha u^j  \in C([0,T],  L^{2} (\R^N ) )$, for $j=1,2$.
Therefore, we deduce from equation~\eqref{NLSI} and  Strichartz's estimate~\eqref{fAl26} that 
\begin{equation} \label{fUU1} 
u^j - e^{i\cdot \Delta }\DI \in L^q  ((0,T), L^r  (\R^N ) ), \quad j=1,2,
\end{equation} 
for every admissible pair $(q,r)$. 
Set now 
\begin{equation*} 
S= \sup \{ \tau \in [0,T];\, u^1(t)=u^2 (t)  \text{ for }0\le t\le \tau   \},
\end{equation*} 
so that $0\le S\le T$. Uniqueness follows if we show that $S=T$. 
Assume by contradiction that $S<T$. 
Changing $u^1 (\cdot ), u^2 (\cdot )$ to $u^1 (S+ \cdot ), u^2 (S+ \cdot )$, we are reduced to the case $S=0$, so that
\begin{equation} \label{fUUb2} 
\sup _{ (q,r)\in  \Admis } \| u^1-u^2 \| _{ L^q ((0,\tau ), L^r) } >0\quad  \text{for all}\quad 0<\tau \le T. 
\end{equation} 
On the other hand, it follows from~\eqref{fUU1} that
$u^1- u^2  \in L^q  ((0,T), L^r  (\R^N ) )$
for every admissible pair $(q,r)$.
Moreover, it follows from equation~\eqref{NLSI} (for both $u^1$ and $u^2$) that  
\begin{equation*} 
u^1(t) - u^2(t)= \int _0^t e^{i(t-s) \Delta }[ |u^1(s)|^\alpha u^1(s) - |u^2(s)|^\alpha u^2(s) ]\, ds.
\end{equation*} 
Applying Strichartz's estimate~\eqref{fAl26}, we deduce that
\begin{equation} \label{fUU3} 
\sup _{ (q,r)\in  \Admis } \| u^1-u^2 \| _{ L^q ((0,\tau ), L^r) }\le \CStr  \| \, |u^1|^\alpha u^1- |u^2|^\alpha u^2 \| _{ L^{2} ((0,\tau ), L^{\frac {2N} {N+2}}) },
\end{equation} 
for every $0<\tau \le T$.
On the other hand, it follows from~\eqref{fSPL1}  that
\begin{equation*} 
 |\, |u^1|^\alpha u^1- |u^2|^\alpha u^2| \le f |u^1-u^2|,
\end{equation*} 
where $f= (\alpha +1) ( |u^1|^\alpha +  |u^2|^\alpha )$.
Since $u^1, u^2 \in C([0,T],  L^{\Twostar} (\R^N ) )$, we see that 
\begin{equation} \label{fUU6} 
f\in  C([0,T], L^{\frac {N} {2 }} (\R^N ) ). 
\end{equation} 
Given any $R>0$, we set
\begin{equation*} 
f_R = \min \{  f, R \},\quad f^R=  f- f_R. 
\end{equation*} 
It is not difficult to show (by dominated convergence, using~\eqref{fUU6}) that
\begin{equation*} 
 \| f^R \| _{ L^\infty ((0,T), L^{\frac {N} {2}}) } =: \varepsilon _R \goto  _{ R\to \infty  }0.
\end{equation*} 
Moreover,
\begin{equation*}
\|f_R\| _{ L^\infty ((0,T), L^N ) }\leq R^\frac{1}{2}\|f\|_{L^\infty
((0,T), L^{\frac {N} {2}} ) }^{\frac{1}{2}} =: C(R) <\infty ,
\end{equation*}
for all $R>0$.
Therefore, given any $0<\tau \le T$,
\begin{equation} \label{fUU10} 
 \|f^R  |u^1-u^2| \, \|  _{ L^{2} ((0,\tau ), L^{\frac {2N} {N+2}}) } \le \varepsilon _R  \|u^1 -u^2 \| _{ L^2 ((0,\tau ), L^{\frac {2N} {N-2}} ) },
\end{equation} 
and 
\begin{equation} \label{fUU11} 
\begin{split} 
 \|f_R  |u^1-u^2| \, \|  _{ L^{2} ((0,\tau ), L^{\frac {2N} {N+2}}) } & \le  C(R) \|u^1 -u^2 \| _{ L^2 ((0,\tau ), L^2 ) } \\ & \le C(R) \tau ^{\frac {1} {2}} \|u^1 -u^2 \| _{ L^\infty ((0,\tau ), L^2 ) } .
\end{split} 
\end{equation} 
It follows from~\eqref{fUU3}, \eqref{fUU10} and~\eqref{fUU11}   that
\begin{equation} \label{fUU12} 
\sup _{ (q,r)\in  \Admis } \| u^1-u^2 \| _{ L^q ((0,\tau ), L^r) }\le \CStr 
[ \varepsilon _R + C(R) \tau ^{\frac {1} {2}}
 ] \sup _{ (q,r)\in  \Admis } \| u^1-u^2 \| _{ L^q ((0,\tau ), L^r) }.
\end{equation} 
We first fix $R$ sufficiently large so that $\CStr \varepsilon _R\le \frac {1} {4}$. Then, we choose $0<\tau _0\le T$ sufficiently small so that $\CStr C(R) \tau _0^{\frac {1} {2}}\le \frac {1} {4}$, and we deduce from~\eqref{fUU12} that
\begin{equation*} 
\sup _{ (q,r)\in  \Admis } \| u^1-u^2 \| _{ L^q ((0,\tau _0), L^r) }=0.
\end{equation*} 
This contradicts~\eqref{fUUb2} and proves uniqueness.
\end{proof} 

\section{The local Cauchy problem} \label{sLocal} 

In this section, we prove local existence for the equation~\eqref{NLSI} by a fixed-point argument. More precisely, we have the following result.

\begin{prop} \label{eLocEx} 
Let $M>0$ be sufficiently small so that
\begin{gather} 
  |\lambda |  \CStr  (\alpha +1)  2^\alpha  \CSob ^\alpha  M ^\alpha  \le \frac {1} {2}, \label{fAI50}  \\
  |\lambda | [  \CStr (\alpha +1)  \CSob  ^\alpha
 +  \CSTu  (1+\CSob ^{\alpha +1})  ]  M^{\alpha }   \le \frac {1} {2}, \label{fAI52}
\end{gather} 
where $\CSTu$ is the constant in Lemma~$\ref{eNLem1}$, and $\CStr$ and $\CSob$ are the constants in~\eqref{fAl25}-\eqref{fAl26} and~\eqref{fAl17},  respectively. 
Let $\DI\in \dot H^2 (\R^N ) $,  $T>0$ and suppose further that
\begin{gather}
F( \DI, T) \le \frac {M} {4},   \label{fAI53} \\
(2+   |\lambda |  
 \CSTu \| \Delta \DI \| _{ L^2 }^\alpha)  F( \DI, T) 
  +  |\lambda | \CSTu F( \DI, T)^{\alpha +1}  \le \frac {M} {2}. \label{fAI54}
\end{gather} 
It follows that there exists a solution $u\in  C( [0,T] , \dot H^2 (\R^N ) )  \cap L^\gamma  ((0, T ), \dot H^{2 , \rho }(\R^N ) )$ of~\eqref{NLSI}. 
Moreover, $u\in \Yset _{ \DI ,T,M}$ (given by Definition~$\ref{eSetY}$),  $\Delta u \in L^q ((0,T), L^r (\R^N )) $ and $u_t\in   L^q ((0,T), L^r (\R^N ) ) \cap C ( [0,T] , L^2 (\R^N ) )$ for every  admissible pair $(q,r)$.
If, in addition, $\DI\in L^2 (\R^N ) $, then $u \in   L^q ((0,T), L^r (\R^N ) ) \cap C ( [0,T] , L^2 (\R^N ) )$.
\end{prop}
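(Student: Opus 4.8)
The plan is to construct the solution of~\eqref{NLSI} as a fixed point of the map $\Phi$ given by $\Phi(u)(t)= e^{it\Delta}\DI - i\lambda \int_0^t e^{i(t-s)\Delta}|u(s)|^\alpha u(s)\,ds$ on the complete metric space $(\Yset _{ \DI, T, M }, \dist)$ of Definition~\ref{eSetY} (complete by Remark~\ref{eRem1}\eqref{eRem1:2}, and nonempty because it contains $e^{i\cdot\Delta}\DI$, by~\eqref{fAl25} and~\eqref{fAI53}), and then to bootstrap the regularity of the fixed point. The feature that shapes the argument is that $\alpha<1$ once $N\ge 8$, so $|u|^\alpha u$ need not be twice differentiable in the space variables; I would therefore never apply the Laplacian to $|u|^\alpha u$, controlling second derivatives instead through the equation together with the time derivative $\partial_t(|u|^\alpha u)$, for which Lemma~\ref{eElem2v2} gives an explicit formula. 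This is also the reason $\Yset _{ \DI, T, M }$ is organised around the function $v=u-e^{i\cdot\Delta}\DI$, which vanishes at $t=0$: the vanishing is what makes the time-derivative estimates close in the homogeneous space, via Lemma~\ref{eNLem1}. I expect the step $\Phi(\Yset _{ \DI, T, M })\subset\Yset _{ \DI, T, M }$ to be the main obstacle — it is the point at which the smallness conditions~\eqref{fAI50}--\eqref{fAI54} have to be matched to the estimates.

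For the self-mapping step, given $u\in\Yset _{ \DI, T, M }$ I would set $w=\Phi(u)$ and use that $w$ solves $iw_t+\Delta w=\lambda|u|^\alpha u$ with $w(0)=\DI$, so that $w_t(0)=i\Delta\DI-i\lambda|\DI|^\alpha\DI\in L^2(\R^N)$ by~\eqref{fAl24}. Because $\partial_s(|u|^\alpha u)$ is given by Lemma~\ref{eElem2v2} (its hypotheses holding with $q_1=q_2=\gamma$, $r_1=\Rhostar$, $r_2=\rho$, thanks to~\eqref{fAl17}) and lies in $L^{\gamma'}((0,T),L^{\rho'}(\R^N))$ by~\eqref{fAl23}, differentiating Duhamel's formula gives $w_t(t)= e^{it\Delta}(i\Delta\DI-i\lambda|\DI|^\alpha\DI)-i\lambda\int_0^t e^{i(t-s)\Delta}\partial_s(|u(s)|^\alpha u(s))\,ds$. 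Applying Strichartz's estimates~\eqref{fAl25}--\eqref{fAl26}, the bound~\eqref{fAl23}, Sobolev's inequality~\eqref{fAl17} (to turn $\|\Delta u\|_{L^\gamma((0,T),L^\rho)}\le M$ into $\|u\|_{L^\gamma((0,T),L^\Rhostar)}\le\CSob M$), and the definition~\eqref{fAl33} of $F$, I would bound $\|w_t\|_{L^\gamma((0,T),L^\rho)}$ by a fixed multiple of $F(\DI,T)+M^{\alpha+1}$. Then, from $\Delta w=\lambda|u|^\alpha u-iw_t$ and $\||u|^\alpha u\|_{L^\gamma((0,T),L^\rho)}=\|u\|_{L^{(\alpha+1)\gamma}((0,T),L^{(\alpha+1)\rho})}^{\alpha+1}$, Lemma~\ref{eNLem1} bounds $\|\Delta w\|_{L^\gamma((0,T),L^\rho)}$ using only the quantities just controlled, and conditions~\eqref{fAI52}--\eqref{fAI54} are exactly what is needed to deduce $\|\Delta w\|_{L^\gamma((0,T),L^\rho)}\le M$ and $\|w_t\|_{L^\gamma((0,T),L^\rho)}\le M$, i.e.\ properties~\eqref{eSetY:1}--\eqref{eSetY:2}. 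Property~\eqref{eSetY:4} is immediate, and~\eqref{eSetY:3} follows since $\partial_t(w-e^{i\cdot\Delta}\DI)\in L^\gamma((0,T),L^\rho)$ and $(w-e^{i\cdot\Delta}\DI)(0)=0$, so $w-e^{i\cdot\Delta}\DI\in W^{1,\gamma}((0,T),L^\rho(\R^N))\subset L^\gamma((0,T),L^\rho(\R^N))$.

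Next I would verify the contraction estimate: for $u,\tilde u\in\Yset _{ \DI, T, M }$ one has $\Phi(u)-\Phi(\tilde u)=-i\lambda\int_0^{\cdot}e^{i(\cdot-s)\Delta}(|u|^\alpha u-|\tilde u|^\alpha\tilde u)\,ds$, so~\eqref{fAl26}, \eqref{fSP11} and~\eqref{fAl17} give $\dist(\Phi(u),\Phi(\tilde u))\le|\lambda|\CStr(\alpha+1)(2\CSob M)^\alpha\dist(u,\tilde u)$, which is $\le\frac12\dist(u,\tilde u)$ by~\eqref{fAI50}. Banach's fixed point theorem then yields a unique $u\in\Yset _{ \DI, T, M }$ with $u=\Phi(u)$, hence a solution of~\eqref{NLSI}.

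For the remaining regularity assertions, I would argue as follows. The fixed point $u_t$ satisfies the Duhamel identity of the second paragraph, whose right-hand side belongs to $L^q((0,T),L^r(\R^N))\cap C([0,T],L^2(\R^N))$ for every admissible $(q,r)$ by~\eqref{fAl25}--\eqref{fAl26} and~\eqref{fAl23}; hence so does $u_t$. Since $|u|^\alpha u\in C([0,T],L^2(\R^N))$ by Lemma~\ref{eNLem2}\eqref{eNLem2:i}, the identity $\Delta u=\lambda|u|^\alpha u-iu_t$ gives $u\in C([0,T],\dot H^2(\R^N))$, in particular $u\in L^\infty((0,T),\dot H^2(\R^N))\cap L^\gamma((0,T),\dot H^{2,\rho}(\R^N))$. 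Estimate~\eqref{fFN1v2} then gives $|u|^\alpha u\in L^2((0,T),L^{\frac{2N}{N-2}}(\R^N))$, and since the admissible pairs are precisely the segment joining $(2,\frac{2N}{N-2})$ to $(\infty,2)$, interpolating this with $|u|^\alpha u\in L^\infty((0,T),L^2(\R^N))$ (by log-convexity of the $L^q((0,T),L^r)$ norms) shows $|u|^\alpha u\in L^q((0,T),L^r(\R^N))$ for every admissible $(q,r)$; hence so does $\Delta u=\lambda|u|^\alpha u-iu_t$. Also $u-e^{i\cdot\Delta}\DI=-i\lambda\int_0^{\cdot}e^{i(\cdot-s)\Delta}|u|^\alpha u\,ds$ lies in $L^q((0,T),L^r(\R^N))\cap C([0,T],L^2(\R^N))$ for every admissible $(q,r)$ by~\eqref{fAl26} and the remark following it, since $|u|^\alpha u\in L^1((0,T),L^2(\R^N))$ and $(\infty,2)$ is admissible. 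Finally, when $\DI\in L^2(\R^N)$ one has $e^{i\cdot\Delta}\DI\in C(\R,L^2(\R^N))\cap L^q((0,T),L^r(\R^N))$ for every admissible $(q,r)$ by~\eqref{fAl25}, and adding this to $u-e^{i\cdot\Delta}\DI$ gives the last statement.
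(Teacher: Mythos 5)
Your proposal is correct and follows essentially the same route as the paper's proof: the same fixed-point map $\Phi$ on $\Yset_{\DI,T,M}$, the same use of the differentiated Duhamel formula together with~\eqref{fAl23}, Lemma~\ref{eNLem1} and the equation to close the self-mapping estimates under~\eqref{fAI50}--\eqref{fAI54}, the same contraction via~\eqref{fSP11}, and the same regularity bootstrap through Lemma~\ref{eNLem2}~\eqref{eNLem2:i}, \eqref{fFN1v2} and interpolation. The only (harmless) deviation is that you obtain property~\eqref{eSetY:3} of $\Yset_{\DI,T,M}$ by integrating $\partial_t(\Phi(u)-e^{i\cdot\Delta}\DI)$ from $0$, whereas the paper gets it from $|u|^\alpha u\in L^\infty((0,T),L^2(\R^N))$ and Strichartz's estimate~\eqref{fAl26}.
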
 

\begin{proof} 
We look for a fixed point of the map  $\Phi $ defined by
\begin{equation} \label{fAl19}
\begin{split} 
\Phi (u)(t) & = e^{it\Delta }\DI - i \lambda \int _0^t e^{i(t-s) \Delta }  [|u|^\alpha u ](s) \, ds\\
& = e^{it\Delta }\DI - i \lambda \int _0^t e^{i s \Delta } [ |u|^\alpha u](t-s) \, ds
\end{split} 
\end{equation} 
in the set $\Yset _{ \DI ,T,M}$ of Definition~\ref{eSetY}.  
Note that  $\Phi (u)$ satisfies
\begin{equation} \label{fAl20}
\begin{cases} 
i\Phi _t + \Delta \Phi = \lambda   |u|^\alpha u,\\
\Phi (0)= \DI,
\end{cases} 
\end{equation} 
and that 
\begin{equation} \label{fAl21}
\partial _t \Phi (u) = ie^{it\Delta }[ \Delta \DI -  |\DI|^\alpha \DI ]- i \lambda \int _0^t e^{i(t-s) \Delta } \partial _s[  |u|^\alpha u] (s) \, ds.
\end{equation} 
We first claim that $\Yset _{ \DI ,T,M}$ is nonempty. Indeed, 
 if $ \widetilde{u}  (t) \equiv e^{it\Delta }\DI$, then 
\begin{equation*} 
 \|   \widetilde{u}_t  \| _{ L^\gamma ((0,T), L^{\rho } )}  = \| \Delta  \widetilde{u}  \| _{ L^\gamma ((0,T), L^{\rho } )}  \le F(T, \DI) \le M
\end{equation*} 
by~\eqref{fAI53}.  Since $ \widetilde{u} - e^{i\cdot \Delta }\DI =0$, we see that $ \widetilde{u} \in \Yset _{ \DI ,T,M}$. 
Next, it follows from~\eqref{fAl23} and~\eqref{fAl17} that
\begin{equation} \label{fAl23v2}
\begin{split} 
 \| \partial _t[  |u|^\alpha u]  \| _{ L^{\gamma '}((0,T), L^{\rho '} )} &
  \le (\alpha +1)  \CSob ^\alpha 
  \|\Delta u\| _{ L^\gamma ((0,T), L^{\rho} )}^\alpha  \|u_t\| _{ L^\gamma ((0,T), L^\rho ) } \\
&  \le (\alpha +1)    \CSob  ^\alpha M^{\alpha +1} .
\end{split} 
\end{equation}
Applying~\eqref{fAl21}, \eqref{fAl33}  \eqref{fAl26} and~\eqref{fAl23v2}, we see   that if $u\in \Yset  _{ \DI, T, M } $, then
\begin{equation}  \label{fAl28}
 \| \partial _t \Phi (u) \| _{ L^\gamma ((0,T), L^\rho ) }\le 2 F( \DI, T)
 +   |\lambda |  \CStr (\alpha +1)   \CSob  ^\alpha M^{\alpha +1} \le M,
\end{equation} 
where we used~\eqref{fAI53}  and~\eqref{fAI50} in the last inequality. 
In view of~\eqref{fAl20} we have
\begin{equation} \label{fAl38b1}
 \| \Delta \Phi (u) \| _{ L^\gamma ((0,T), L^\rho ) }\le    \| \partial _t \Phi (u) \| _{ L^\gamma ((0,T), L^\rho ) } +  |\lambda |  \| u \| _{ L^{(\alpha +1)\gamma } ((0,T), L^{(\alpha +1) \rho }) } ^{\alpha +1}.
\end{equation} 
It follows from~\eqref{fAl38b1}, the first inequality in~\eqref{fAl28} and~\eqref{fAl49b1} that
for every $u\in \Yset _{ \DI, T, M } $,
\begin{multline} \label{fAl38b11}
 \| \Delta \Phi (u) \| _{ L^\gamma ((0,T), L^\rho ) }\le (2+   |\lambda |  
 \CSTu \| \Delta \DI \| _{ L^2 }^\alpha)  F( \DI, T) 
  +  |\lambda | \CSTu F( \DI, T)^{\alpha +1}  \\
 +   |\lambda | [  \CStr (\alpha +1)   \CSob  ^\alpha
 +  \CSTu  (1+\CSob ^{\alpha +1})] M^{\alpha +1}   \le M ,
\end{multline} 
where we used~\eqref{fAI54} and~\eqref{fAI52}  in the last inequality. 
 Next, observe that $ |u|^\alpha u\in L^\infty  ((0,T), L^2  (\R^N ) )$ by Lemma~\ref{eNLem2}~\eqref{eNLem2:i}. Therefore, it follows from~\eqref{fAl19} and Stri\-chartz's estimate that  $\Phi (u) - e^{i\cdot \Delta } \DI \in L^\gamma ((0,T),  L^\rho (\R^N ))$. 
Thus we see that   $\Phi : \Yset _{ \DI, T, M }  \mapsto \Yset _{ \DI, T, M } $. 
We next deduce from~\eqref{fSP11} that, given $u,v\in \Yset _{ \DI, T, M }$, 
\begin{equation*} 
 \| \,  |u|^\alpha u-  |v|^\alpha v \| _{ L^{\gamma '}((0,T), L^{\rho '}) } \le (\alpha +1) (2 \CSob M)^\alpha \dist (u,v); 
\end{equation*} 
and so, applying~\eqref{fAl19} and~\eqref{fAl26},
\begin{equation*}
  \| \Phi (u)- \Phi (v)\| _{ L^\gamma ((0,T), L^{\rho } )} 
 \le  |\lambda |  \CStr  (\alpha +1) (2  \CSob  M )^\alpha \dist (u,v) \le \frac {1} {2}  \dist (u,v),
\end{equation*} 
where we used~\eqref{fAl19} in the last inequality. Thus we see that
$ \dist (\Phi (u), \Phi (v)) \le \frac {1} {2}  \dist (u,v)$
 for all $u,v\in \Yset _{ \DI, T, M } $, and it follows from Banach's fixed point theorem that $\Phi $ has a fixed point  $u\in \Yset _{ \DI, T, M } $. In particular, $u$ is a solution of the integral equation~\eqref{NLSI}. 

We now prove the further regularity properties.
We first claim that $u_t, \Delta u\in C( [0,T] , L^2(\R^N ) \cap L^q((0,T), L^r (\R^N ))$ for every admissible pair $(q,r)$.
Indeed, note  that (see~\eqref{fAl21}) 
\begin{equation} \label{fAl21b1}
u_t = ie^{it\Delta }[ \Delta \DI -  |\DI|^\alpha \DI ]- i \lambda \int _0^t e^{i(t-s) \Delta } \partial _s [  |u|^\alpha u] (s) \, ds.
\end{equation}
Since $\Delta \DI -  |\DI|^\alpha \DI \in L^2 (\R^N ) $ and $\partial _t[  |u|^\alpha u] \in L^{\gamma '} ((0,T), L^{\rho '} (\R^N ) )$ by~\eqref{fAl23v2}, it follows from Strichartz's estimates that 
$u_t \in L^q ((0,T), L^r (\R^N ) )$ for every admissible pair $(q,r)$ and $u_t\in C( [0,T]  , L^2 (\R^N ) )$. 
Since $ |u|^\alpha u\in C( [0,T] , L^2 (\R^N ) )$ by Lemma~\ref{eNLem2}~\eqref{eNLem2:i}, it follows from the equation~\eqref{NLS} that  $\Delta u \in C( [0,T] , L^2 (\R^N ) )$. Next, since $\Delta u\in L^\gamma ((0,T), L^\rho (\R^N )) \cap L^\infty ((0,T), L^2 (\R^N ))$, it follows from~\eqref{fFN1v2}  that $ |u|^\alpha u\in L^2 ((0,T), L^{\frac {2N} {N-2}} (\R^N ) )$. Furthermore, $ |u|^\alpha u\in L^\infty ((0,T), L^2 (\R^N ) )$ by  Lemma~\ref{eNLem2}~\eqref{eNLem2:i};
and so, by applying H\"older's inequality, we see that $  |u|^\alpha u\in L^q ((0,T), L^r (\R^N ) )$ for every admissible pair $(q,r)$. Since $\Delta u= -iu_t + \lambda   |u|^\alpha u$, we conclude that $\Delta u  \in L^q ((0,T), L^r (\R^N ) )$.

Finally, suppose further that $\DI\in L^2 (\R^N ) $. Since $ |u|^\alpha u\in L^\infty ((0,T), L^2 (\R^N ) )$ by  Lemma~\ref{eNLem2}~\eqref{eNLem2:i}, we deduce from equation~\eqref{NLSI} and Strichartz estimates~\eqref{fAl25} and~\eqref{fAl26}   that $u \in   L^q ((0,T), L^r (\R^N ) ) \cap C ( [0,T] , L^2 (\R^N ) )$ for every  admissible pair $(q,r)$.
This completes the proof.
\end{proof} 

\section{Continuous dependence} \label{sCDep} 
In this section, we prove continuous dependence on a small time interval.

\begin{prop} \label{eCDBase} 
Let $M>0$ satisfy~\eqref{fAI50}-\eqref{fAI52} and
\begin{gather}
 2 (\alpha +1)  \CStr \CSob ^\alpha  M^\alpha \le \frac {1} {2} ,  \label{fCs02b1} \\
(\alpha +1)  |\lambda | \CStr   \CSob^\alpha  M^\alpha \le \frac {1} {2},  \label{fCs02} \\
2  (\alpha +1) (2M)^{\frac {\alpha } {\alpha +1}} 
  ( |\lambda |  \CSTu )^{\frac {1} {\alpha +1}} \le \frac {1} {2},  \label{fACI53b1}
\end{gather} 
where $\CStr$, $\CSob$ and $\CSTu$ are the constant in~\eqref{fAl25}-\eqref{fAl26}, \eqref{fAl17} and~\eqref{fAl49b1}, respectively. 
Let  $(\DI^n ) _{ n\ge 0 }\subset \dot H^2 (\R^N ) $ and suppose
\begin{equation} \label{fCs01} 
\DI^n \goto  _{ n\to \infty  }\DI^0 \quad  \text{in}\quad \dot H^2 (\R^N ) . 
\end{equation}
Let $T>0$ and suppose further that
\begin{gather}
F( \DI^n , T) \le \frac {M} {4},   \label{fACI53} \\
(2+   |\lambda |  
 \CSTu \| \Delta \DI ^n\| _{ L^2 }^\alpha)  F( \DI^n, T) 
  +  |\lambda | \CSTu F( \DI^n, T)^{\alpha +1}  \le \frac {M} {2}, \label{fACI54}
\end{gather} 
for all $n\ge 1$. 
For every $n\ge 0$, let $u^n \in \Yset _{ \DI^n, T, M }$ be the solution of~\eqref{NLSI} with $\DI$ replaced by $\DI^n$, given by Proposition~$\ref{eLocEx}$. (The assumptions of Proposition~$\ref{eLocEx}$ are satisfied, by~\eqref{fAI50}, \eqref{fAI52}, \eqref{fACI53} and~\eqref{fACI54}.)
It follows that $\Delta u^n \to \Delta u^0$  and $u^n_t \to u^0_t$ in $L^q ((0,T), L^r (\R^N ))$ as $n\to \infty $, for every admissible pair $(q,r)$.
If, in addition,  $(\DI^n) _{ n\ge 0 }\subset L^2 (\R^N ) $ and $\DI^n \to \DI^0$ in $L^2 (\R^N ) $, then $u^n \to u^0$ in $L^q ((0,T), L^r (\R^N ))$.
\end{prop}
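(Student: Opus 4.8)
The plan is to adapt Kato's continuous-dependence scheme~\cite[Theorem~III$'$]{Kato3} to the critical exponent~\eqref{fAl1}, using a truncation of the nonlinear coefficient as in the proof of Proposition~\ref{eUncUni} and exploiting the smallness conditions~\eqref{fAI50}--\eqref{fACI53b1} on $M$ to absorb the remaining error terms. First I would record the uniform bounds: since $\DI^n\to\DI^0$ in $\dot H^2$ the set $\{\DI^n:n\ge 0\}$ is compact in $\dot H^2$, so $A_0:=\sup_n\|\Delta\DI^n\|_{L^2}<\infty$, $\DI^n\to\DI^0$ in $L^\Twostar$ (by~\eqref{fAl17}), $|\DI^n|^\alpha\DI^n\to|\DI^0|^\alpha\DI^0$ in $L^2$ (by~\eqref{fCs07}), and $\sup_nF(\DI^n,t)\to 0$ as $t\downarrow 0$ (by~\eqref{fAl34}); and by Proposition~\ref{eLocEx} and Remark~\ref{eRem2} the solutions satisfy, uniformly in $n$, $\|\Delta u^n\|_{L^\gamma((0,T),L^\rho)},\|u^n_t\|_{L^\gamma((0,T),L^\rho)}\le M$, while $\|u^n\|_{L^\infty((0,T),L^\Twostar)}$ and $\sup_{(q,r)\in\Admis}(\|\Delta u^n\|_{L^q((0,T),L^r)}+\|u^n_t\|_{L^q((0,T),L^r)})$ are bounded by a constant $C_0=C_0(A_0,M,N)$; moreover $u^n\in C([0,T],\dot H^2)$ and $|u^n|^\alpha u^n\in C([0,T],L^2)$, so $|u^0|^\alpha\in C([0,T],L^{N/2})$.

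Next, set $\tilde w^n:=(u^n-e^{i\cdot\Delta}\DI^n)-(u^0-e^{i\cdot\Delta}\DI^0)$, so that $\tilde w^n(0)=0$ and $\tilde w^n\in C([0,T],L^2)\cap L^q((0,T),L^r)$ for every admissible $(q,r)$ with norm $\le 2C_0$, and $w^n:=u^n-u^0=\tilde w^n+e^{i\cdot\Delta}(\DI^n-\DI^0)$ with $\|e^{i\cdot\Delta}(\DI^n-\DI^0)\|_{L^\infty((0,T),L^\Twostar)}\to 0$ and $\|e^{i\cdot\Delta}(\DI^n-\DI^0)\|_{L^\gamma((0,T),L^\Rhostar)}\to 0$ (by~\eqref{fAl25} applied to $\Delta(\DI^n-\DI^0)$, and~\eqref{fAl17}). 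It is essential to work with $\tilde w^n$ rather than $w^n$: the inhomogeneous free evolutions cancel in $\tilde w^n$, whereas $w^n$ is controlled neither in $L^\gamma((0,T),L^\rho)$ nor in $L^\infty((0,T),L^2)$, since $\DI^n-\DI^0\to 0$ only in $\dot H^2\hookrightarrow L^\Twostar$, not in $L^2$. The goal reduces to $\Sigma_n:=\|(\tilde w^n)_t\|_{L^\gamma((0,T),L^\rho)}+\|\Delta\tilde w^n\|_{L^\gamma((0,T),L^\rho)}\to 0$. Granting this: $\Sigma_n\to 0$ forces $\|w^n\|_{L^\gamma((0,T),L^\Rhostar)}\to 0$ through~\eqref{fAl17}, hence by Lemma~\ref{eNLem2}(ii) and~\eqref{fFN1v3} the difference $|u^n|^\alpha u^n-|u^0|^\alpha u^0\to 0$ in $L^\infty((0,T),L^2)$ and in $L^2((0,T),L^{2N/(N-2)})$, and so in every admissible $L^q((0,T),L^r)$ by interpolation; then~\eqref{fAl21} and $\Delta\tilde w^n=-i(\tilde w^n)_t+\lambda(|u^n|^\alpha u^n-|u^0|^\alpha u^0)$ give $\Delta u^n\to\Delta u^0$ and $u^n_t\to u^0_t$ in every admissible $L^q((0,T),L^r)$; and if moreover $\DI^n\to\DI^0$ in $L^2$, the free error is small in every $L^q((0,T),L^r)$, so~\eqref{NLSI} (with $|u^n|^\alpha u^n\to|u^0|^\alpha u^0$ in $L^\infty((0,T),L^2)$) gives $u^n\to u^0$ in every admissible $L^q((0,T),L^r)$.

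To prove $\Sigma_n\to 0$, I would argue on the pieces of a fixed finite partition $0=t_0<\dots<t_K=T$, inducting on $j$; on $[t_j,t_{j+1}]$ one restarts the Duhamel formulas at $t_j$, so that the free terms carry the factor $\|(\Delta u^n-\Delta u^0)(t_j)\|_{L^2}$ (directly, and via~\eqref{fCs07} for $|u^n(t_j)|^\alpha u^n(t_j)-|u^0(t_j)|^\alpha u^0(t_j)$), which is $o(1)$ by the inductive hypothesis. Expanding $\partial_t[|u^n|^\alpha u^n-|u^0|^\alpha u^0]$ by~\eqref{eElem2v2:1} (legitimate by Lemma~\ref{eElem2v2} and Lemma~\ref{eNLem2}(i)), bounding by~\eqref{fSPL1} and~\eqref{feRemBSt}, and everywhere replacing $w^n$ by $\tilde w^n$ plus the small error, produces terms of four kinds. (a) Products in which a coefficient $|u^n|^{\alpha}$, $|u^n|^{\alpha-1}$ or $|u^0|^{\alpha-1}$ multiplies $(\tilde w^n)_t$ or $\tilde w^n$ (times $u^0_t$); because $\gamma=\alpha+2$, so that $\tfrac1{\gamma'}=\tfrac{\alpha+1}\gamma$, $\tfrac1{\rho'}=\tfrac\alpha\Rhostar+\tfrac1\rho$, $\tfrac\alpha\Rhostar+\tfrac2\rho=1$ and $\tfrac1\rho=\tfrac\alpha\Twostar+\tfrac1\Rhostar$, each such coefficient can be paired against the \emph{small} norm $\|u^n\|_{L^\gamma L^\Rhostar}$ or $\|u^0\|_{L^\gamma L^\Rhostar}\le\CSob M$, whence after Strichartz these are $O(M^\alpha)$ times $\Sigma_n^{(j)}$ (here $\Sigma_n^{(j)}$ denotes $\Sigma_n$ with $(0,T)$ replaced by $(t_j,t_{j+1})$), absorbed by~\eqref{fAI50},~\eqref{fAI52},~\eqref{fCs02}. (b) The genuinely scale-invariant contribution arising from the term $|u^n|^\alpha u^n-|u^0|^\alpha u^0$ in $\Delta\tilde w^n=-i(\tilde w^n)_t+\lambda(|u^n|^\alpha u^n-|u^0|^\alpha u^0)$: writing $|u^n|^\alpha\le C(|u^0|^\alpha+|w^n|^\alpha)$, the only problematic factor is $|u^0|^\alpha\tilde w^n$ with $|u^0|^\alpha\in L^\infty((0,T),L^{N/2})$ \emph{fixed}; one truncates $|u^0|^\alpha=(|u^0|^\alpha)_R+(|u^0|^\alpha)^R$ exactly as in Proposition~\ref{eUncUni}, the tail giving $\varepsilon_R\,\CSob\,\Sigma_n^{(j)}$ with $\varepsilon_R:=\|(|u^0|^\alpha)^R\|_{L^\infty((0,T),L^{N/2})}\to 0$, and the bounded part, estimated in a dual Strichartz norm against $\tilde w^n\in L^\infty((0,T),L^2)$, giving $C(R)\,|t_{j+1}-t_j|^{1/2}\,\Sigma_n^{(j)}$ --- this use of $(\infty,2)\in\Admis$, legitimate precisely because the free evolutions cancel in $\tilde w^n$, is what supplies the gaining power of $|t_{j+1}-t_j|$. (c) The superlinear pieces $|w^n|^{\alpha+1}$, controlled by~\eqref{fHLD1} and~\eqref{fAl17} (or Lemma~\ref{eNLem1} applied to $w^n\in\Yset_{\DI^n-\DI^0,T,2M}$, using $F(\DI^n-\DI^0,T)\to 0$) by a constant times $(\Sigma_n^{(j)}+o(1))^{(N-2)/(N-4)}$, which since the relevant norms are a priori $O(M)$ is $O(M^{2/(N-4)})\Sigma_n^{(j)}+o(1)$. (d) The remaining terms, which are $o(1)$. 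Fixing $R$ with $|\lambda|\CStr\CSob\varepsilon_R\le\tfrac18$, then choosing the mesh small enough --- depending only on $R,C_0,M,N$, not on $n$ --- that the $|t_{j+1}-t_j|^{1/2}$-contribution is $\le\tfrac18\Sigma_n^{(j)}$ on each piece, the estimate closes as $\Sigma_n^{(j)}\le\tfrac12\Sigma_n^{(j)}+C(o(1)+\|(\Delta u^n-\Delta u^0)(t_j)\|_{L^2})$; and since $L^\gamma((t_{j-1},t_j),L^\rho)$-convergence of $(\tilde w^n)_t,\Delta\tilde w^n$ transfers, through the $C([0,T],L^2)$-part of Strichartz and the equation, to $\|(\Delta u^n-\Delta u^0)(t_j)\|_{L^2}\to 0$, the induction --- started at $t_0=0$, where the restart data converge by the first step --- gives $\Sigma_n^{(j)}\to 0$ for all $j$, hence $\Sigma_n\to 0$.

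The step I expect to be the main obstacle is precisely the critical-space accounting of part~(b): one must verify that, after the expansion~\eqref{eElem2v2:1}, the inequalities of Lemma~\ref{eElem} and the reduction $|u^n|^\alpha\le C(|u^0|^\alpha+|w^n|^\alpha)$, \emph{exactly one} family of terms has a coefficient in the scale-invariant space $L^\infty L^{N/2}$, built from the \emph{fixed} limit $u^0$, so that the truncation level and the partition mesh may be chosen uniformly in $n$, every other term being either $O(M^\alpha)$ (absorbable) or $o(1)$. Inseparable from this are the necessity of working with $\tilde w^n$ rather than $w^n$ (because $e^{i\cdot\Delta}(\DI^n-\DI^0)$ is controlled only in $\dot H^2\hookrightarrow L^\Twostar$) and the careful propagation of the induction across the breakpoints, where the $C([0,T],L^2)$-half of the Strichartz estimate and the continuity statements of Lemma~\ref{eNLem2} are indispensable.
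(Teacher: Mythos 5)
Your skeleton — working with $\widetilde w^n:=(u^n-e^{i\cdot\Delta}\DI^n)-(u^0-e^{i\cdot\Delta}\DI^0)$ so the uncontrolled free evolutions cancel, expanding the time derivative of the nonlinearity by Lemma~\ref{eElem2v2} and Lemma~\ref{eElem}, absorbing the $O(M^\alpha)$ terms by the smallness hypotheses, and finishing with Lemma~\ref{eNLem2}~\eqref{eNLem2:ii} and~\eqref{fFN1v3} — is the paper's. But the single partition--induction on $\Sigma_n$ does not close, for a reason your part~(d) sweeps away. When $N\ge 9$, so that $\alpha=\frac4{N-4}<1$, the estimate~\eqref{feRemBSt} turns the cross term of $\partial_t(|u^n|^\alpha u^n-|u^0|^\alpha u^0)$ into $|u^n-u^0|^\alpha\,|u^0_t|$, in which the difference enters to the \emph{sublinear} power $\alpha$. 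An inequality of the form $\Sigma_n\le\tfrac12\Sigma_n+C\,\Sigma_n^\alpha+o(1)$ with $\alpha<1$ yields only boundedness, never $\Sigma_n\to0$: a sublinear power cannot be absorbed by shrinking the coefficient (for small $\Sigma$ one has $\Sigma^\alpha\gg\Sigma$), nor by shrinking the mesh. The paper escapes this by a strict ordering of steps: it first proves, as a self-contained Step~1, that $\sup_{(q,r)\in\Admis}\|\widetilde w^n\|_{L^q((0,T),L^r)}\to0$ — an estimate in which $\widetilde w^n$ occurs only \emph{linearly}, so it closes by absorption under~\eqref{fCs02b1} on all of $(0,T)$, with no truncation and no partition. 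Only then does it estimate $u^n_t-u^0_t$, truncating the fixed function $|u^0_t|\in L^\gamma((0,T),L^\rho)$ (not $|u^0|^\alpha$): the tail contributes $\varepsilon_R\,O(M^\alpha)$, while the bounded part, placed in $L^\gamma L^{\widetilde\rho}$ with $\widetilde\rho>\rho$, contributes $C(R)\,T^\theta\,\|\widetilde w^n\|_{L^\gamma((0,T),L^\rho)}^\alpha$, which is $o(1)$ for each fixed $R$ \emph{because of Step~1}; one lets $n\to\infty$ first and $R\to\infty$ second. Your scheme never establishes the convergence of $\widetilde w^n$ before the sublinear terms appear, so they are neither $o(1)$ nor absorbable.

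The second gap is the functional setting of your truncation in part~(b). You propose to control the contribution of $|u^n|^\alpha u^n-|u^0|^\alpha u^0$ to $\Delta\widetilde w^n$ by estimating $(|u^0|^\alpha)_R\,\widetilde w^n$ in a dual Strichartz norm against $\widetilde w^n\in L^\infty L^2$, gaining $|t_{j+1}-t_j|^{1/2}$. But $\Delta\widetilde w^n$ is read off from the \emph{pointwise} identity $\Delta\widetilde w^n=-i(\widetilde w^n)_t+\lambda(|u^n|^\alpha u^n-|u^0|^\alpha u^0)+(\text{free terms})$; to bound it in $L^\gamma((0,T),L^\rho)$ you need the nonlinear difference in $L^\gamma L^\rho$ itself, and a bound in $L^2L^{2N/(N+2)}$ gives nothing here because there is no propagator to convolve with. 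The $(\infty,2)$--endpoint device of Proposition~\ref{eUncUni} is thus inapplicable to this term — and also unnecessary: the paper bounds $\|\,|u^n|^\alpha u^n-|u^0|^\alpha u^0\|_{L^\gamma((0,T),L^\rho)}$ via~\eqref{fSPL1} and the $L^{(\alpha+1)\gamma}L^{(\alpha+1)\rho}$ estimate of Lemma~\ref{eNLem1} applied to $u^n-u^0\in\Yset_{\DI^n-\DI^0,T,2M}$, absorbing by~\eqref{fACI53b1}, with no truncation and no partition. Once these two points are corrected, your argument collapses onto the paper's Steps~1--3; the concluding steps you describe (upgrading to all admissible pairs and the $L^2$ case) match the paper's Steps~4--5.
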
 

\begin{proof} 
Since $u^n \in \Yset _{ \DI^n, T, M }$, we have
\begin{equation} \label{fCs03} 
  \|u^n _t\| _{ L^\gamma ((0,T), L^\rho ) } \le M, \quad  \| \Delta u^n \| _{ L^\gamma ((0,T), L^\rho ) }\le M,
\end{equation} 
 for all $n\ge 0$. 
We set
\begin{gather} 
\delta _n= \sup  _{ (q,r) \in \Admis} \| u^n_t -u^0_t\| _{ L^q ((0,T), L^r ) },
\label{fCs1} \\
\sigma _n=  \|\Delta ( u^n -u^0) \| _{ L^\gamma ((0,T), L^\rho ) }  , \label{fCs2}
\end{gather} 
where $\Admis$ is defined by~\eqref{fAdmis}. 
We also set 
\begin{equation} \label{fCs06} 
\eta_n = \|\Delta (\DI^n -\DI^0 ) \| _{ L^2 }+  \| \,  |\DI^n|^\alpha \DI^n -  |\DI^0 |^\alpha \DI^0 \| _{ L^2 } 
+  F ( \DI^n -\DI^0 , T),
\end{equation} 
where $F$ is defined by~\eqref{fAl33}. 
It follows from~\eqref{fCs07}, \eqref{fCs01},  and Strichartz's estimate~\eqref{fAl25}    that
\begin{equation}  \label{fCs09} 
\eta _n \goto _{ n\to \infty  }0.
\end{equation} 
We now proceed in five steps.

\Step1 We prove that
\begin{equation} \label{fFSPP11} 
\sup  _{ (q,r) \in \Admis} \| w^n \| _{ L^q ((0,T), L^r )  } \goto  _{ n\to \infty  }0,
\end{equation} 
where $\Admis$ is defined by~\eqref{fAdmis} and 
\begin{equation} \label{fFSPP12} 
w^n = (u^n- e^{i\cdot \Delta }\DI^n )- (u^0- e^{i\cdot \Delta }\DI ^0 ).
\end{equation} 
Indeed, it follows from~\eqref{NLSI} (for $u^0$ and $u^n$) that
\begin{equation} \label{fTZ2} 
w^n (t)=-i \lambda  \int _0^t e^{i(t-s) \Delta }[ |u^n(s)|^\alpha u^n(s)- |u^0(s)|^\alpha u^0(s)]\, ds.
\end{equation} 
It follows from~\eqref{fSPL1} that
\begin{multline} \label{fTZ1} 
 |\,  |u^n(s)|^\alpha u^n(s) -  |u^0 (s)|^\alpha u^0 (s) | 
 \\ \le (\alpha +1) (  |u^n(s)|^\alpha +  |u^0 (s)|^\alpha )  |u^n(s) - u^0 (s)| 
 \le g^n_1 + g^n _2,
\end{multline} 
where
\begin{align*} 
g^n _1 & = (\alpha +1) (  |u^n(s)|^\alpha +  |u^0 (s)|^\alpha )  |w^n(s)| , \\
g^n _2 & = (\alpha +1) (  |u^n(s)|^\alpha +  |u^0 (s)|^\alpha )   |e^{is\Delta }(\DI^n -\DI ^0 )|.
\end{align*} 
Note first that
\begin{equation*} 
\begin{split} 
 \| \, |u^\ell (s)|^\alpha  |w^n(s)|  \| _{ L^{\gamma '}((0,T), L^{\rho '}) } &\le  \| u^\ell  \| _{ L^\gamma ((0, T), L^\Rhostar )}^\alpha  \|w^n \| _{ L^\gamma ((0,T), L^\rho ) } \\ 
 & \le \CSob ^\alpha  \| \Delta u^\ell  \| _{ L^\gamma ((0, T), L^\rho )}^\alpha  \|w^n \| _{ L^\gamma ((0,T), L^\rho ) }  \\ 
 & \le \CSob ^\alpha  M^\alpha  \|w^n \| _{ L^\gamma ((0,T), L^\rho ) },
\end{split} 
\end{equation*} 
for all $\ell, n\ge 0$, by~\eqref{fCs03}.   
Therefore,
\begin{equation} \label{fTZ3} 
 \| g^n _1 \|_{ L^{\gamma '}((0,T), L^{\rho '}) } \le  2 (\alpha +1) \CSob ^\alpha  M^\alpha  \|w^n \| _{ L^\gamma ((0,T), L^\rho ) } .
\end{equation} 
Next, since $\frac {1} {2} = \frac {\alpha } {\Twostar }+ \frac {1} {\Twostar}$, 
it follows from H\"older's estimate and~\eqref{fAl17}  that
\begin{equation*} 
\begin{split} 
\|  \,  |u^\ell (s)|^\alpha     |e^{is\Delta }(\DI^n -\DI ^0)\, | \|  _{ L^2 } & \le  \| u^\ell (s) \| _{ L^\Twostar }^\alpha   \|e^{is\Delta }(\DI^n -\DI ^0)\| _{ L^\Twostar } \\ & \le \CSob  \| u^\ell (s) \| _{ L^\Twostar }^\alpha 
 \| \Delta (\DI^n -\DI ^0) \| _{ L^2 },
\end{split} 
\end{equation*} 
for all $0\le s\le T$ and $\ell, n\ge 0$. 
Applying~\eqref{eNLem2:1b1} and~\eqref{fCs06}, we obtain 
\begin{equation*} 
\|  \,  |u^\ell (s)|^\alpha     |e^{is\Delta }(\DI^n -\DI ^0)\, | \|  _{ L^2 } \le \CSob  
\CSTub ^{\frac {\alpha } {\alpha +1}} 
 \Bigl[  \CSob^{\alpha +2}   \|\Delta  \DI ^\ell \| _{ L^2 }^{\alpha +2}     + (1+\CSob ) M ^{\alpha +2}  \Bigr]^{\frac {\alpha } {\alpha +2}} \eta_n ,
\end{equation*} 
so that
\begin{equation} \label{fTZ4} 
\| g^n _2 \|_{ L^1((0,T), L^2) } \le   2(\alpha +1) T  \CSob  
\CSTub ^{\frac {\alpha } {\alpha +1}} 
 \Bigl[  \CSob^{\alpha +2}   \|\Delta  \DI ^\ell \| _{ L^2 }^{\alpha +2}     + (1+\CSob ) M ^{\alpha +2}  \Bigr]^{\frac {\alpha } {\alpha +2}} \eta_n . 
\end{equation} 
We now set  $g=  |u^n(s)|^\alpha u^n(s) -  |u^0 (s)|^\alpha u^0 (s) $. Since  $ |g| \le  g^n_1 +g^n_2$ by~\eqref{fTZ1}, there exist measurable functions $ \widetilde{g}^n_1 $ and $ \widetilde{g}^n_2 $ such that  $ | \widetilde{g}^n_1|\le g^n_1 $, $ | \widetilde{g}^n_2|\le g^n_2 $ and  $g= \widetilde{g}^n_1 + \widetilde{g}^n_2$.\footnote{For example,  $ \widetilde{g}^n_1 =g$ if $ |g|\le g^n_1 $ and $ \widetilde{g}^n_1 =  g^n_1    |g|^{-1} g $ otherwise.} Therefore, it follows from~\eqref{fTZ2}, \eqref{fTZ3}, \eqref{fTZ4} and Stricharts'z estimate~\eqref{fAl26} that
\begin{multline*} 
 \sup  _{ (q,r) } \| w^n \| _{ L^q ((0,T), L^r )  } \le  2 (\alpha +1)  \CStr \CSob ^\alpha  M^\alpha  \|w_n \| _{ L^\gamma ((0,T), L^\rho ) } \\ 
 + 2 (\alpha +1) T \CStr \CSob  
\CSTub ^{\frac {\alpha } {\alpha +1}} 
 \Bigl[  \CSob^{\alpha +2}   \|\Delta  \DI ^n\| _{ L^2 }^{\alpha +2}     + (1+\CSob ) M ^{\alpha +2}  \Bigr]^{\frac {\alpha } {\alpha +2}} \eta_n. 
\end{multline*} 
Applying~\eqref{fCs02b1}, \eqref{fCs01}  and~\eqref{fCs09}, we conclude that~\eqref{fFSPP11} holds.  

\Step2 We prove that 
\begin{equation} \label{fST1:1} 
\delta _n \goto _{ n\to \infty  }0,
\end{equation} 
where $\delta _n$ is defined by~\eqref{fCs1}. 
Indeed,  we deduce from formula~\eqref{fAl21b1} and Strichartz's estimates~\eqref{fAl25}-\eqref{fAl26} that  
\begin{multline} \label{fCs3}
\delta _n \le \CStr  \|\Delta (\DI^n -\DI ^0) \| _{ L^2 }+ \CStr  \| \,  |\DI^n|^\alpha \DI^n -  |\DI ^0 |^\alpha \DI ^0\| _{ L^2 } \\ +   |\lambda | \CStr
  \| \partial _t (  |u^n|^\alpha u^n -  |u^0 |^\alpha u^0 ) \| _{ L^{\gamma '}((0,T), L^{\rho '} ) }\\
\le \CStr \eta_n +   |\lambda | \CStr
  \| \partial _t (  |u^n|^\alpha u^n -  |u^0|^\alpha u^0) \| _{ L^{\gamma '}((0,T), L^{\rho '} ) }  .
\end{multline} 
We now apply~\eqref{eElem2v2:1} with $a=\alpha $ to both $u$ and $v$ and we obtain
\begin{multline*} 
 |\partial _t ( |v|^\alpha v-  |u|^\alpha u)| \le   (\alpha +1)  |v|^\alpha | v_t -u_t |  \\ 
+ \frac {\alpha +2} {2}  \Bigl[ |\,  |v|^\alpha - |u|^\alpha |+  |\,  |v|^{\alpha -2}v^2 -  |u|^{\alpha -2}u^2|  \,   \Bigr] |u_t|.
\end{multline*} 
Applying~\eqref{feRemBSt} with $a =\alpha $, we deduce that
\begin{equation} \label{fCl9}
 | \partial _t ( |v|^\alpha v-  |u|^\alpha u) | \le (\alpha +1)  |v|^\alpha  | v_t - u_t| +   \CHo F (u,v)   | u_t|, 
\end{equation} 
where 
\begin{equation}  \label{fCl9b1}
F (u,v) =   \begin{cases}
  ( |u|^{\alpha -1} +  |v|^{\alpha -1})   |u-v| & \text{if }\alpha \ge 1, \\
  |u-v|^\alpha &  \text{if }0<\alpha \le 1,
 \end{cases}
\end{equation} 
and the constant $\CHo \ge 1$ depends only on $N$. 
It follows from~\eqref{fCl9}  that
\begin{multline} \label{fCl10v1}
 \| \partial _t ( |u^n|^\alpha u^n-  |u^0 |^\alpha  u^0 ) \| _{ L^{\gamma '} ((0,T), L^{\rho '} )} \le 
 (\alpha +1)  \| u^n \| _{ L^\gamma ((0,T), L^\Rhostar ) }^\alpha   \|  u^n_t -u^0 _t \| _{ L^\gamma ((0,T), L^\rho ) } \\ +  \CHo \| F(u^n, u^0)  | u^0_t |\, \| _{ L^{\gamma '} ((0,T), L^{\rho '} )} \\ 
 \le (\alpha +1) \CSob ^\alpha M^\alpha \delta _n +  \CHo \| F(u^n, u^0)  | u^0_t |\, \| _{ L^{\gamma '} ((0,T), L^{\rho '} )} ,
\end{multline} 
where we used~\eqref{fAl17}, \eqref{fCs03} and~\eqref{fCs1}   in the last inequality.
Applying now~\eqref{fCs3} and~\eqref{fCl10v1}, we see  that
\begin{equation*}
 \delta _n \le \CStr  \eta_n 
 + (\alpha +1)  |\lambda | \CStr   \CSob ^\alpha M^\alpha    \delta _n   
+  |\lambda | \CStr  \CHo \| F(u^n, u^0)  | u^0_t |\, \| _{ L^{\gamma '} ((0,T), L^{\rho '} )} .
\end{equation*} 
which yields, using~\eqref{fCs02}
 \begin{equation}  \label{fCl20v1}
 \delta _n \le 2 \CStr  \eta_n  
+ 2  |\lambda | \CStr  \CHo \| F(u^n, u^0)  | u^0_t |\, \| _{ L^{\gamma '} ((0,T), L^{\rho '} )} .
\end{equation} 
Now, observe that $ | u^0_t|$ is a fixed function of $L^\gamma ((0,T), L^\rho  (\R^N ) )$. Therefore, if we set
\begin{equation} \label{fCl11}
f_R = \min \{  | u^0_t|, R \},\quad f^R=  | u^0_t|- f_R,
\end{equation} 
for $R>0$, then
\begin{equation} \label{fCl12}
 \| f^R \| _{ L^\gamma ((0,T), L^\rho ) }=: \varepsilon _R \goto _{ R\uparrow \infty  }0,
\end{equation} 
and
\begin{equation}  \label{fCl12b1}
 \| f_R \| _{ L^\gamma ((0,T), L^\rho ) }\le   \| u_t \| _{ L^\gamma ((0,T), L^\rho ) }\le  M.
\end{equation} 
Moreover, if $ \widetilde{\rho } \ge  \rho $, then
\begin{equation} \label{fCl13}
 \| f_R (t) \| _{ L^{ \widetilde{\rho } } }^{ \widetilde{\rho }  } \le R^{ \widetilde{\rho } -\rho }  \|  u_t (t)\| _{ L^\rho  }^\rho ,
\end{equation} 
for a.a. $t$, so that
\begin{equation} \label{fCl14}
 \|f_R\| _{ L^\gamma ((0,T), L^{ \widetilde{\rho } }) } \le R^{\frac { \widetilde{\rho }-\rho  } { \widetilde{\rho } }} T^{\frac { \widetilde{\rho }-\rho  } {\gamma  \widetilde{\rho } }}
  \| u_t\| _{ L^\gamma ((0,T), L^{ {\rho } }) } ^{\frac {\rho } { \widetilde{\rho } }}  \le R^{\frac { \widetilde{\rho }-\rho  } { \widetilde{\rho } }} T^{\frac { \widetilde{\rho }-\rho  } {\gamma  \widetilde{\rho } }}
M^{\frac {\rho } { \widetilde{\rho } }}.
\end{equation} 
We now fix $R>0$. 
Writing $ | u_t^0|= f^R+f_R$, we see that
\begin{multline} \label{fCl15}
\| F(u^n, u^0)  | u^0_t |\, \| _{ L^{\gamma '} ((0,T), L^{\rho '} )} \\  \le 
\varepsilon _R  \| F(u^n, u^0)^{\frac {1} {\alpha }} \|_{ L^{\gamma } ((0,T), L^{\Rhostar } )}^\alpha 
 + \| F(u^n, u^0) f_R \| _{ L^{\gamma '} ((0,T), L^{\rho '} )}. 
\end{multline} 
Since $ |F(u^n, u^0) |\le ( |u^n|+ |u^0|)^\alpha $ by~\eqref{fCl9b1}, we deduce from~\eqref{fAl17}  and~\eqref{fCs03} that
\begin{equation*} 
 \| F(u^n, u^0)^{\frac {1} {\alpha }} \|_{ L^{\gamma } ((0,T), L^{\Rhostar } )}^\alpha \le 2^\alpha \CSob ^\alpha M^\alpha ,
\end{equation*} 
and it follows from~\eqref{fCl15} that
\begin{multline} \label{fCl15v1}
\| F(u^n, u^0)  | u^0_t |\, \| _{ L^{\gamma '} ((0,T), L^{\rho '} )} \\  
 \le 2^\alpha \CSob ^\alpha M^\alpha  \varepsilon _R   
 + \| F(u^n -u^0 )  f_R \| _{ L^{\gamma '} ((0,T), L^{\rho '} )}.
\end{multline} 
We now estimate the last term in~\eqref{fCl15v1}, and we first assume 
\begin{equation*}
\alpha \le 1.
\end{equation*} 
We have
\begin{equation} \label{fCl20b2}
 |F(u^n, u^0)| \le  |u^n -u^0 |^\alpha  \le  |w^n|^\alpha +  |e^{it \Delta } (\DI^n - \DI ^0)|^\alpha ,
\end{equation}  
where $w^n$ is defined by~\eqref{fFSPP12}. 
We first estimate, using~\eqref{fCl12b1}, \eqref{fAl17} and~\eqref{fAl25}  
\begin{equation*}  
\begin{split} 
\| \,  |e^{i\cdot  \Delta } (\DI^n - \DI ^0)|^\alpha  f_R \| _{ L^{\gamma '} ((0,T), L^{\rho '} )} &
\le  \| e^{i\cdot  \Delta } (\DI^n - \DI ^0) \| _{ L^\gamma ((0,T), L^\Rhostar )}^\alpha  \| f_R\| _{ L^\gamma ((0,T), L^\rho ) } \\ & \le M \CSob ^\alpha \| e^{i\cdot  \Delta }\Delta  (\DI^n - \DI ^0) \| _{ L^\gamma ((0,T), L^\rho ) }^\alpha \\ &  \le M \CSob ^\alpha \CStr ^\alpha  \| \Delta (\DI^n - \DI ^0)\| _{ L^2 }^\alpha ,
\end{split} 
\end{equation*} 
so that 
\begin{equation} \label{fCl20b3}
\| \,  |e^{i\cdot  \Delta } (\DI^n - \DI ^0)|^\alpha  f_R \| _{ L^{\gamma '} ((0,T), L^{\rho '} )} \le M \CSob ^\alpha \CStr ^\alpha \eta_n ^\alpha .
\end{equation} 
To estimate the contribution of the second term in~\eqref{fCl20b2}, we set
\begin{equation*} 
 \widetilde{\rho }= \frac {2(N-2) (N-4)} {N^2-8N +8} .
\end{equation*} 
It follows that $ \widetilde{\rho } >\rho $ and that $\frac {1} {\gamma '}= \frac {\alpha +1} {\gamma }$, $\frac {1} {\rho '}= \frac {\alpha } {\rho }+ \frac {1} { \widetilde{\rho } }$. Applying H\"older's inequality in space and time,  and~\eqref{fCl14}, we deduce that
\begin{equation} \label{fCl18v2}
\begin{split} 
 \| \, |w^n|^\alpha  f_R \|  _{ L^{\gamma '} ((0,T), L^{\rho '} )} & \le  \| w^n\| _{ L^\gamma ((0,T), L^\rho  )}^\alpha  \|f_R\| _{ L^\gamma ((0,T) ,L^{ \widetilde{\rho } } )} \\
 & \le R^{\frac { \widetilde{\rho }-\rho  } { \widetilde{\rho } }} T^{\frac { \widetilde{\rho }-\rho  } {\gamma  \widetilde{\rho } }} \| w^n\| _{ L^\gamma ((0,T), L^\rho  )}^\alpha  .
\end{split} 
\end{equation}
It now follows from~\eqref{fCl20v1}, \eqref{fCl15v1},  \eqref{fCl20b2},  \eqref{fCl20b3}, and~ \eqref{fCl18v2} that
\begin{multline}  \label{fCl20}
 \delta _n \le
2^{\alpha +1}  |\lambda | \CStr \CHo \CSob^\alpha M^\alpha \varepsilon _R+  
  2 \CStr  \eta_n \\  + 2 |\lambda | M \CSob ^\alpha \CStr ^{\alpha+1} \CHo \eta_n ^\alpha 
+ 2  |\lambda | \CStr  \CHo R^{\frac { \widetilde{\rho }-\rho  } { \widetilde{\rho } }} T^{\frac { \widetilde{\rho }-\rho  } {\gamma  \widetilde{\rho } }}
 \| w^n\| _{ L^\gamma ((0,T), L^\rho  )}^\alpha  .
\end{multline} 
We first let $n\to \infty $ in~\eqref{fCl20}.  
Applying~\eqref{fCs09} and~\eqref{fFSPP11},  we obtain
\begin{equation*} 
\limsup  _{ n\to \infty  } \delta _n \le  2^{\alpha +1}  |\lambda | \CStr \CHo \CSob^\alpha M^\alpha \varepsilon _R.
\end{equation*} 
Since $R>0$ is arbitrary, we may let $R\to \infty $, and~\eqref{fST1:1} follows by using~\eqref{fCl12}.  
We now suppose
\begin{equation*}
\alpha > 1,
\end{equation*} 
and we have
\begin{equation} \label{fCl20v3}
\begin{split} 
 |F(u^n, u^0| & \le ( |u^n|^{\alpha -1}+  |u^0|^{\alpha -1})  |u^n -u^0|   \\ & \le ( |u^n|^{\alpha -1}+  |u^0|^{\alpha -1}) (  |w^n| +  |e^{it \Delta } (\DI^n - \DI ^0)| ). 
\end{split} 
\end{equation} 
We set
\begin{equation*} 
 \widetilde{\rho }= \frac {2 N (N-2) } {N^2-8N +16} .
\end{equation*} 
It follows that $ \widetilde{\rho } >\rho $, and that $\frac {1} {\gamma '}= \frac {\alpha +1} {\gamma }$, $\frac {1} {\rho '}= \frac {\alpha -1} {\Rhostar }+ \frac {1} {\rho }+ \frac {1} { \widetilde{\rho } }$.
We estimate by H\"older's inequality in space and time
\begin{multline*}  
\|  ( |u^n|^{\alpha -1}+  |u^0|^{\alpha -1})   |e^{it \Delta } (\DI^n - \DI ^0)|  f_R \| _{ L^{\gamma '} ((0,T), L^{\rho '} )} \\
\le C (  \|u^n\| _{ L^\gamma ((0,T), L^\Rhostar )}^{\alpha -1} + \|u^0\| _{ L^\gamma ((0,T), L^\Rhostar )}^{\alpha -1}  ) \\
  \| e^{it \Delta } (\DI^n - \DI ^0) \| _{ L^\gamma ((0,T), L^\Rhostar )}  \| f_R\| _{ L^\gamma ((0,T), L^\rho ) } ,
\end{multline*} 
and
\begin{multline*}
 \|  ( |u^n|^{\alpha -1}+  |u^0|^{\alpha -1})  |w^n|  f_R \|  _{ L^{\gamma '} ((0,T), L^{\rho '} )}  
  \\
\le C (  \|u^n\| _{ L^\gamma ((0,T), L^{ {\Rhostar}}  )}^{\alpha -1} + \|u^0\| _{ L^\gamma ((0,T), L^{ {\Rhostar} } )}^{\alpha -1}  ) \\  \| w^n\| _{ L^\gamma ((0,T), L^{{\rho  }} )}  \|f_R\| _{ L^\gamma ((0,T) ,L^{ \widetilde{\rho } } )} ,
\end{multline*}
and it is not difficult to conclude as above that~\eqref{fST1:1} holds.

\Step3 We prove that 
\begin{equation} \label{fST3:1} 
\sigma _n \goto _{ n\to \infty  }0,
\end{equation} 
where $\sigma _n$ is defined by~\eqref{fCs2}. 
Indeed, it follows from the equation~\eqref{NLS} (for $u$ and $u^n$) that
\begin{equation}  \label{fCs13} 
\sigma _n \le  \delta _n +  |\lambda | \,  \|  |u^n|^\alpha u^n-  |u^0|^\alpha u^0 \|_{ L^\gamma ((0,T), L^\rho ) }.
\end{equation} 
Note that by~\eqref{fSPL1} 
\begin{multline} \label{fCs12} 
 \|  |u^n|^\alpha u^n-  |u^0|^\alpha u^0 \|_{ L^\gamma ((0,T), L^\rho ) } \le (\alpha +1) (  \| u^n \| _{ L^{(\alpha +1)\gamma } ((0,T), L^{(\alpha +1) \rho }) } ^\alpha \\ + \| u^0 \| _{ L^{(\alpha +1)\gamma } ((0,T), L^{(\alpha +1) \rho }) } ^\alpha )  \| u^n -u^0 \| _{ L^{(\alpha +1)\gamma } ((0,T), L^{(\alpha +1) \rho }) }.
\end{multline} 
Next, it follows from equation~\eqref{NLS} and~\eqref{fCs03} that 
\begin{equation} \label{fCs10} 
  |\lambda | \,  \| u^n  \| _{ L^{(\alpha +1)\gamma } ((0,T), L^{(\alpha +1) \rho }) }^{\alpha +1}\le 2M,
\end{equation} 
for all $n\ge 0$. 
Moreover, $u^n - u^0 \in \Yset  _{ \DI^n -\DI^0 ,  T, 2M}$. Therefore, \eqref{fAl49b1} yields the estimate
\begin{equation*} 
\| u^n -u^0  \| _{ L^{(\alpha +1)\gamma } ((0,T), L^{(\alpha +1) \rho }) } ^{\alpha +1}  \le 
 \CSTu [ 2 \eta_n^{\alpha +1} + \CSob^{\alpha +1} \sigma _n^{\alpha +1}+ \delta _n^{\alpha +1} ].
\end{equation*} 
Since $( x^{\alpha +1} +y^{\alpha +1} +z^{\alpha +1} )^{\frac {1} {\alpha +1}}\le x+y+z$, we deduce that
\begin{equation}  \label{fCs11} 
\| u^n -u ^0 \| _{ L^{(\alpha +1)\gamma } ((0,T), L^{(\alpha +1) \rho }) }  \le 
 \CSTu ^{\frac {1} {\alpha +1}}[ 2^{\frac {1} {\alpha +1}} \eta_n  + \CSob  \sigma _n + \delta _n ].
\end{equation} 
It follows from~\eqref{fCs12}, \eqref{fCs10} and~\eqref{fCs11} that  
\begin{multline}  \label{fCs14} 
 |\lambda | \, \|  |u^n|^\alpha u^n-  |u^0|^\alpha u^0 \|_{ L^\gamma ((0,T), L^\rho ) } \\
  \le 2  (\alpha +1) (2M)^{\frac {\alpha } {\alpha +1}} 
  ( |\lambda |  \CSTu )^{\frac {1} {\alpha +1}}[ 2^{\frac {1} {\alpha +1}} \eta_n  + \CSob  \sigma _n + \delta _n ].
\end{multline} 
Estimates~\eqref{fCs13} and~\eqref{fCs14} yield
\begin{equation} \label{fCs15} 
\sigma _n \le \delta _n + 2  (\alpha +1) (2M)^{\frac {\alpha } {\alpha +1}} 
  ( |\lambda |  \CSTu )^{\frac {1} {\alpha +1}}[ 2^{\frac {1} {\alpha +1}} \eta_n  + \CSob  \sigma _n + \delta _n ].
\end{equation} 
Applying~\eqref{fACI53b1}, we deduce from~\eqref{fCs15} that 
$\sigma _n \le 3 \delta _n + 2   \eta_n$, 
and~\eqref{fST3:1}  follows from~\eqref{fST1:1} and~\eqref{fCs09}.  

\Step4 We prove that
\begin{equation}  \label{fCs21}
\Delta u^n \goto _{ n\to \infty  }\Delta u^0 \quad  \text{in}\quad  L^q ((0,T), L^r  (\R^N ) ),
\end{equation} 
for every admissible pair $(q,r)$. 
Indeed, note first that by lemma~\ref{eNLem2}~\eqref{eNLem2:ii}, together with Sobolev's inequality~\eqref{fAl17}, \eqref{fCs01}, \eqref{fST1:1} and~\eqref{fST3:1},
\begin{equation}  \label{fCs23}
\| \,  |u^n|^\alpha u^n  - |u^0|^\alpha u^0 \| _{L^\infty ((0,T),  L^2) } \goto _{ n\to \infty  }0.
\end{equation} 
Next, observe that by the equation~\eqref{NLS}, \eqref{fST1:1} and~\eqref{fCs23},    $\Delta u^n$ is bounded, as $n\to \infty $, in $L^\infty ((0,T), L^2 (\R^N ) )$
Therefore, it follows from~\eqref{fFN1v3} and~\eqref{fST3:1} that
\begin{equation}  \label{fCs23b1}
\| \,  |u^n|^\alpha u^n  - |u^0|^\alpha u^0 \| _{L^2((0,T),  L^{\frac {2N} {N-2}}) } \goto _{ n\to \infty  }0.
\end{equation} 
It now follows from~\eqref{fCs23} and~\eqref{fCs23b1} that   
\begin{equation}  \label{fCs23b2}
\| \,  |u^n|^\alpha u^n  - |u^0|^\alpha u^0 \| _{L^q((0,T),  L^r) } \goto _{ n\to \infty  }0,
\end{equation} 
for every admissible pair $(q,r)$. 
Property~\eqref{fCs21} follows from~\eqref{fST1:1}, \eqref{fCs23b2} and the equation~\eqref{NLS}.

\Step5 The case $\DI^n \to \DI^0$ in $L^2 (\R^N ) $. \quad If $(\DI^n) _{ n\ge 0 } \subset  L^2 (\R^N )$ and $\DI^n \to \DI^0$ in $ L^2 (\R^N ) $ as $n\to \infty $, then   by Strichartz's estimate,
\begin{equation*} 
 \| e^{i \cdot \Delta }( \DI^n -\DI ^0)\| _{ L^q ((0,T), L^r )} \goto _{ n\to \infty  }0,
\end{equation*} 
for every admissible pair $(q,r)$. Applying~\eqref{fFSPP11}, we conclude that 
\begin{equation*} 
 \| u^n -u ^0 \| _{ L^q ((0,T), L^r )} \goto _{ n\to \infty  }0,
\end{equation*} 
for every admissible pair $(q,r)$. This completes the proof.
\end{proof} 

\section{Proof of Theorem~$\ref{eComplete}$} \label{sProofMain} 

In this section, we complete the proof of Theorem~\ref{eComplete}.
Note first that uniqueness follows from Proposition~\ref{eUncUni}. 

Fix $M>0$ sufficiently small so that~\eqref{fAI50} and~\eqref{fAI52} are satisfied.  
Given an initial value $\DI\in \dot H^2 (\R^N ) $, it follows from~\eqref{fAl34} that if $T>0$ is sufficiently small, then~\eqref{fAI53} and~\eqref{fAI54} are satisfied. Therefore, it follows from Proposition~\ref{eLocEx} that there exists a solution $u\in C([0,T], \dot H^2 (\R^N ) )$ of~\eqref{NLSI}. We now extend $u$ to a maximal existence interval by the usual procedure. We set
\begin{equation*} 
\Tma= \sup\{\tau >0;\,  \text{there exists a solution }  C([0,\tau ], \dot H^2 (\R^N ) )  \text{ of }\eqref{NLSI}  \},
\end{equation*} 
and it follows from what precedes that $\Tma \ge T>0$. 
By uniqueness, there exists a solution $u\in C([0,\Tma ), \dot H^2 (\R^N ) )$ of~\eqref{NLSI}.  
We now fix $0<S< \Tma$ and 
show that $u\in L^q ((0,S), \dot H^{2,r} (\R^N ) )$ and $u_t \in L^q ((0,S), L^r (\R^N ) )
\cap C([0,S], L^2 (\R^N ) )$ for every admissible pair $(q,r)$. 
Indeed, since $u\in C([0,S], \dot H^2 (\R^N ) )$, we see that $\union _{ 0\le t\le S } \{ u(t) \}$ is a compact subset of $\dot H^2 (\R^N ) $. It then follows from~\eqref{fAl34} that if $T >0$ is sufficiently small, then
\begin{gather}
\sup _{ 0\le t\le S }F( u(t), T ) \le \frac {M} {4},   \label{fAI53v2} \\
\sup _{ 0\le t\le S } (2+   |\lambda |  
 \CSTu \| \Delta \DI \| _{ L^2 }^\alpha)  F( u(t), T ) 
  +  |\lambda | \CSTu F( u(t), T )^{\alpha +1}  \le \frac {M} {2}. \label{fAI54v2}
\end{gather} 
Therefore, we may apply Proposition~\ref{eLocEx} with $\DI$ replaced by $u(t)$ for every $t\in [0,S]$. By uniqueness, we conclude easily that $u$ has the desired regularity properties.
Next, it follows from Lemma~\ref{eNLem2}~\eqref{eNLem2:i} that $ |u|^\alpha u\in  L^\infty   ((0,S), L^2  (\R^N ) )$, so that the further regularity property~\eqref{eComplete:1} follows from Strichartz's estimate~\eqref{fAl26}. If, in addition, $\DI\in L^2 (\R^N ) $, then $e^{i\cdot \Delta }\DI\in C([0,\Tma), L^2 (\R^N ) )$. Therefore, $u \in C([0,\Tma), L^2 (\R^N ) )$, and we conclude that $u \in C([0,\Tma), H^2 (\R^N ) )$.

So far, we have proved the first statements of Theorem~\ref{eComplete}, as well as properties~\eqref{eComplete:2} and~\eqref{eComplete:1}. We now prove property~\eqref{eComplete:3}, 
and we fix $M>0$ sufficiently small so that~\eqref{fAI50} and~\eqref{fAI52} are satisfied.  
We note that by~\eqref{fAl25} and~\eqref{fAl24},   
\begin{equation*} 
F(\DI, \infty ) \le \CStr [  \| \Delta \DI \| _{ L^2 } +   \CSob ^{\alpha +1}  \| \Delta \DI \| _{ L^2 }^{\alpha +1} ].
\end{equation*} 
Therefore, if $ \| \Delta \DI \| _{ L^2 }$ is sufficiently small, then 
\begin{gather}
F( \DI, \infty  ) \le \frac {M} {4},   \label{fAI53v3} \\
(2+   |\lambda |  
 \CSTu \| \Delta \DI \| _{ L^2 }^\alpha)  F( \DI, \infty  ) 
  +  |\lambda | \CSTu F( \DI, \infty  )^{\alpha +1}  \le \frac {M} {2}. \label{fAI54v3}
\end{gather} 
We fix such a $\DI$ and we let $u\in C([0, \Tma), \dot H^2 (\R^N ) )$ be the corresponding solution of~\eqref{NLS}. 
Given any $0< T <\infty $, it follows from~\eqref{fAI53v3}-\eqref{fAI54v3} that we may apply 
Proposition~\ref{eLocEx}. We therefore obtain a solution  of~\eqref{NLS} $u^T \in C([0, T], \dot H^2 (\R^N ) ) \cap \Yset _{ \DI, T, M } $ with $\partial _tu^T\in C([0,T], L^2 (\R^N ) )$. By uniqueness and maximality of $\Tma$, we see that $\Tma >T $ and that $u=u^T$ on $[0,T]$. Since $u^T\in \Yset _{ \DI, T, M } $, we have
$ \| \Delta u\| _{ L^\gamma ((0,T), L^\rho ) }\le M$ and  $ \|u_t\| _{ L^\gamma ((0,T), L^\rho ) } \le M$. 
Therefore, by the blowup alternative we see that $\Tma=\infty $. Thus, we may let $T\to \infty $ and we see that $\Delta u\in  L^\gamma ((0,\infty ), L^\rho (\R^N )) $ and  $ u_t\in  L^\gamma ((0,\infty ), L^\rho (\R^N )) $. 
Next, we deduce from~\eqref{fAl23v2} that $\partial _t[ |u|^\alpha u]\in L^{\gamma '}((0,\infty ), L^{\rho '} (\R^N ))$, so that by~\eqref{fAl21b1} and Strichartz's estimates $u_t \in L^q ((0,\infty ), L^r (\R^N ))$ for every admissible pair $(q,r)$. 
Furthermore, we deduce from Lemma~\ref{eNLem2}~\eqref{eNLem2:ii} that
\begin{equation} \label{fTZ6} 
|u|^\alpha u\in L^\infty ((0,\infty ), L^2 (\R^N ) ),
\end{equation} 
and it follows from~\eqref{NLS} that $\Delta u\in L^\infty ((0,\infty ), L^2 (\R^N ))$. 
Applying~\eqref{fFN1v2}, we deduce that $ |u|^\alpha u\in L^2 ((0,\infty ), L^{\frac {2N} {N-2}} (\R^N ) )$. Interpolating with~\eqref{fTZ6}, we conclude that $  |u|^\alpha u\in L^q ((0,\infty ), L^r (\R^N ) )$ for every admissible pair $(q,r)$. Since $\Delta u= -iu_t + \lambda   |u|^\alpha u$, we see that $\Delta u  \in L^q ((0,\infty ), L^r (\R^N ) )$.

We now prove the blowup alternative~\eqref{eComplete:4}.
Suppose by contradiction that $\Tma <\infty  $ and 
\begin{equation} \label{fFN3}
\| u \| _{ L^\gamma ((0, \Tma), L^\Rhostar  )} <\infty .
\end{equation} 
We first show that 
\begin{gather} 
\| u_t \| _{ L^\gamma ((0, \Tma), L^\rho )} <\infty , \label{fFN4} \\
\| \Delta u \| _{ L^\gamma ((0, \Tma), L^\rho )} <\infty . \label{fFN2}  
\end{gather} 
Fix $\varepsilon >0$ sufficiently small so that
\begin{equation}  \label{fFN3b1}
 (\alpha +1)  |\lambda | \CStr\varepsilon ^\alpha \le \frac {1} {2}.
\end{equation} 
By~\eqref{fFN3}, there  exists $T_\varepsilon \in [0, \Tma)$ such that 
\begin{equation*} 
 \| u \| _{ L^\gamma ((T_\varepsilon , \Tma), L^\Rhostar ) } \le \varepsilon .
\end{equation*} 
Changing $u(\cdot )$ to $u(T_\varepsilon +\cdot )$ and $\DI$ to $u( T_\varepsilon )$, we may assume that $T_\varepsilon =0$, so that
\begin{equation} \label{fFN5} 
 \| u \| _{ L^\gamma ((0 , \Tma), L^\Rhostar ) } \le \varepsilon .
\end{equation} 
We next observe that by~\eqref{fAl21b1}, Strichartz's estimates~\eqref{fAl25}-\eqref{fAl26}, \eqref{fAl24}  and~\eqref{fAl23},
\begin{multline*} 
\| u_t \| _{ L^\gamma ((0, T), L^\rho )}\le \CStr  (\| \Delta \DI\| _{ L^2 } +  \CSob ^{\alpha +1}  \| \Delta \DI \| _{ L^2 }^{\alpha +1}) \\
 + (\alpha +1)  |\lambda | \CStr 
  \|u\| _{ L^\gamma ((0,T), L^{ \Rhostar } )}^\alpha  \|u_t\| _{ L^\gamma ((0,T), L^\rho ) },
\end{multline*} 
for all $0<T<\Tma$. 
Applying~\eqref{fFN5} and~\eqref{fFN3b1}, we deduce that
\begin{equation} \label{fFN6} 
\| u_t \| _{ L^\gamma ((0, T), L^\rho )}\le 2 \CStr  (\| \Delta \DI\| _{ L^2 } +  \CSob ^{\alpha +1}  \| \Delta \DI \| _{ L^2 }^{\alpha +1}) 
\end{equation} 
for all $0<T<\Tma$.  Thus $ \| u_t \| _{ L^\gamma ((0, \Tma), L^\rho )}  <\infty $ and~\eqref{fFN4} holds. We deduce from the equation~\eqref{NLS} that if $0<T<\Tma$, then
\begin{equation} \label{fFN8} 
 \| \Delta u \|_{ L^\gamma ((0 , T), L^\rho ) } \le \| u_t \|_{ L^\gamma ((0 , T), L^\rho ) } +  |\lambda | 
 \| u \| _{ L^{(\alpha +1)\gamma } ((0,T), L^{(\alpha +1) \rho }) } ^{\alpha +1} ,
\end{equation} 
for every $0<T<\Tma$. 
It follows from~\eqref{fFN8},  \eqref{fFN4},  \eqref{fFN3} and~\eqref{fAl49b1} that~\eqref{fFN2} holds. 

Next,  \eqref{fFN4}, \eqref{fFN2}  and~\eqref{fAl23} imply that
$\partial _t[  |u|^\alpha u] \in  L^{\gamma '}((0,\Tma), L^{\rho '} (\R^N ))$, so that by~\eqref{fAl21b1} and Strichartz,  $u_t\in C([0,\Tma], L^2 (\R^N ) )$. Since also $ |u|^\alpha u\in C([0,\Tma], L^2 (\R^N ) )$ by~\eqref{fFN4}, \eqref{fFN2}  and Lemma~\ref{eNLem2}~\eqref{eNLem2:i}, we deduce from  equation~\eqref{NLS} that $\Delta u \in C([0,\Tma], L^2 (\R^N ) )$, so that $u \in C([0,\Tma], \dot H^2 (\R^N ) )$. 
Thus we may apply Proposition~\ref{eLocEx}  and construct a solution $v$ of~\eqref{NLSI} with $\DI$ replaced by $u( \Tma)$, on some time interval $[0,T]$ with $T>0$. Setting 
\begin{equation*} 
 \widetilde{u} (t)=  
 \begin{cases} 
 u(t) & 0\le t\le \Tma ,\\
 v(t- \Tma ) & \Tma \le t\le \Tma +T,
 \end{cases} 
\end{equation*} 
it is not difficult to see that $ \widetilde{u} $ is a solution of~\eqref{NLSI} on $[0, \Tma +T]$, which contradicts the maximality of $\Tma$ and proves the blowup alternative.

It remains to prove the continuous dependence property~\eqref{eComplete:5}. 
This follows from Proposition~\ref{eCDBase} and a standard compactness argument. 
More precisely, let $\DI\in \dot H^2 (\R^N ) $, and let $u$ be the corresponding solution of~\eqref{NLSI}, defined on the maximal interval $[0,\Tma (\DI))$.  
Fix $T< \Tma$, and fix $M>0$ satisfying~\eqref{fAI50}, \eqref{fAI52}, \eqref{fCs02b1}, \eqref{fCs02} and~\eqref{fACI53b1}. Since 
$\union _{ 0\le t\le T } \{ u(t) \}$ is a compact subset of $\dot H^2 (\R^N ) $, it follows from~\eqref{fAl34} that we may fix $\tau  >0$  sufficiently small so that
\begin{gather}
\sup  _{ 0\le t\le T }F( u(t) , \tau ) \le \frac {M} {8},   \label{fACI53v2} \\
\sup  _{ 0\le t\le T } (2+   |\lambda |  
 \CSTu \| \Delta u(t)\| _{ L^2 }^\alpha)  F(u(t) , \tau ) 
  +  |\lambda | \CSTu F( u(t), \tau )^{\alpha +1}  \le \frac {M} {4}. \label{fACI54v2}
\end{gather} 
Let $\ell \ge 1$ be an integer such that $(\ell -1)\tau <T\le \ell \tau $. 
Suppose the sequence $(\DI^n)  _{ n\ge 1 }\subset \dot H^2 (\R^N ) $ satisfies $\DI^n \to \DI$ in $\dot H^2 (\R^N ) $ as $n\to \infty $ and let $u^n$ be the corresponding solutions of~\eqref{NLSI}, with maximal existence time $\Tma (\DI^n)$.  Since $\DI^n \to \DI$, it follows from~\eqref{fACI53v2}-\eqref{fACI54v2} that  there exists $n_1$
\begin{gather*}
F( \DI^n , \tau ) \le \frac {M} {4},    \\
(2+   |\lambda |  
 \CSTu \| \Delta \DI^n\| _{ L^2 }^\alpha)  F(\DI^n , \tau ) 
  +  |\lambda | \CSTu F(\DI^n, \tau )^{\alpha +1}  \le \frac {M} {2},
\end{gather*} 
for all $n\ge n_1$. Therefore, we may apply Proposition~\ref{eCDBase}, and it follows that $\Tma (\DI^n) > \tau $ for $n\ge n_1$ and $\Delta u^n \to \Delta u$  and $u^n_t \to u_t$ in $L^q ((0,\tau ), L^r (\R^N )) $  for every admissible pair $(q,r)$.
If $\tau <T$, we deduce in particular that $u^n( \tau ) \to u(\tau )$ in $ \dot H^2 (\R^N ) )$, so that by~\eqref{fACI53v2}-\eqref{fACI54v2}   there exists $n_2$ such that 
\begin{gather*}
F( u^n(\tau ) , \tau ) \le \frac {M} {4},    \\
(2+   |\lambda |  
 \CSTu \| \Delta u^n(\tau )\| _{ L^2 }^\alpha)  F(u^n(\tau ) , \tau ) 
  +  |\lambda | \CSTu F(u^n(\tau ), \tau )^{\alpha +1}  \le \frac {M} {2},
\end{gather*} 
for all $n\ge n_2$. Applying Proposition~\ref{eCDBase}, we deduce that $\Tma (\DI^n) > 2\tau $ for $n\ge n_2$ and $\Delta u^n \to \Delta u$  and $u^n_t \to u_t$ in $L^q ((0,2 \tau ), L^r (\R^N )) $  for every admissible pair $(q,r)$. We see that we can iterate this argument in order to cover the interval $[0,T]$. 
Finally, if $(\DI^n) _{ n\ge 1 }\subset L^2 (\R^N ) $ and $\DI^n \to \DI$ in $L^2 (\R^N ) $, we obtain  $u^n \to u$ in $L^q ((0,T), L^r (\R^N )) $  for every  admissible pair $(q,r)$ by applying, at each step, the corresponding statement in Proposition~\ref{eCDBase}. 

\appendix

\section{Proof of Lemma~$\ref{eElem2v2}$}

We give the proof of Lemma~\ref{eElem2v2}. It relies on the following property. 

\begin{lem} \label{eApprx1} 
Fix a function $\rho \in C^\infty _\Comp (\R^{N+1} )$, $\rho \ge 0$ with $ \| \rho \| _{ L^1(\R^{N+1} )}= 1$ and, given any $n\ge 1$, set $\rho _n(t,x)= n^{N+1} \rho (nt, nx)$ for $t\in \R$, $x\in \R^N $. 
Let $1\le q,r<\infty $,  $u\in L^q(\R, L^r (\R^N ) )$, and set $u_n = \rho _n \star u$ (where the convolution is on $\R^{N+1} $). It follows that
\begin{equation}  \label{fApprx1} 
 \| u_n \| _{ L^q(\R, L^r) } \le  \| u \| _{ L^q(\R, L^r) },
\end{equation} 
and that  $u_n \to u$ in $ L^q(\R, L^r (\R^N ) )$ as $n\to \infty $.
\end{lem}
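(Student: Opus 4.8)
The plan is to obtain the norm bound~\eqref{fApprx1} from Minkowski's integral inequality combined with Young's convolution inequality, carried out in the mixed-norm space $L^q(\R,L^r(\R^N))$, and then to deduce the convergence $u_n\to u$ by the standard three-$\varepsilon$ splitting argument, using~\eqref{fApprx1} together with the density of smooth compactly supported functions.

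To prove~\eqref{fApprx1}, I would write $u_n(t,x)=\int_{\R^{N+1}}\rho_n(s,y)\,u(t-s,x-y)\,ds\,dy$ and apply Minkowski's integral inequality in $L^r(\R^N_x)$ for each fixed $t$; since $\rho_n\ge0$, this reduces matters to the scalar bound $\|u_n(t)\|_{L^r}\le(h_n\star g)(t)$, where $g(t):=\|u(t)\|_{L^r}$ belongs to $L^q(\R)$ and $h_n(s):=\int_{\R^N}\rho_n(s,y)\,dy\ge0$ satisfies $\|h_n\|_{L^1(\R)}=\|\rho_n\|_{L^1(\R^{N+1})}=1$. Young's inequality then gives $\|u_n\|_{L^q(\R,L^r)}\le\|h_n\|_{L^1}\|g\|_{L^q}=\|u\|_{L^q(\R,L^r)}$, which is~\eqref{fApprx1}; in particular this also shows that the convolution is well defined and lies in $L^q(\R,L^r(\R^N))$.

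For the convergence I would first record that finite sums of tensor products $\phi(t)\psi(x)$ with $\phi\in C_\Comp(\R)$, $\psi\in C_\Comp(\R^N)$ — hence, in particular, functions in $C_\Comp(\R^{N+1})$ — are dense in $L^q(\R,L^r(\R^N))$; this follows from $1\le q,r<\infty$ and the density of step functions in that space. Given $\varepsilon>0$, I would choose $v\in C_\Comp(\R^{N+1})$ with $\|u-v\|_{L^q(\R,L^r)}\le\varepsilon$ and set $v_n=\rho_n\star v$. Since all $\rho_n$ (for $n\ge1$) are supported in one fixed ball $B_R\subset\R^{N+1}$ containing $\Supp\rho$, the functions $v$ and $(v_n)_{n\ge1}$ are supported in a single fixed compact set $K$; and since $v$ is uniformly continuous, $v_n\to v$ uniformly on $\R^{N+1}$, so that $\|v_n-v\|_{L^q(\R,L^r)}\le C_K\|v_n-v\|_{L^\infty(\R^{N+1})}\goto_{n\to\infty}0$. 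Applying~\eqref{fApprx1} to $u-v$ in the splitting $u_n-u=(u-v)_n+(v_n-v)+(v-u)$ gives $\limsup_{n\to\infty}\|u_n-u\|_{L^q(\R,L^r)}\le2\varepsilon$, and since $\varepsilon>0$ is arbitrary this proves $u_n\to u$ in $L^q(\R,L^r(\R^N))$.

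The argument is routine throughout; the only points that call for a little care are the correct order of integration in the vector-valued Minkowski inequality (first in $x$, then in $s$ and $t$) and the density of $C_\Comp(\R^{N+1})$ in the mixed-norm space $L^q(\R,L^r(\R^N))$, neither of which is a genuine obstacle.
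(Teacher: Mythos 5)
Your proposal is correct and follows essentially the same route as the paper: the mixed-norm bound is obtained by reducing to the one-dimensional convolution of $t\mapsto\|u(t)\|_{L^r}$ against $s\mapsto\|\rho_n(s,\cdot)\|_{L^1(\R^N)}$ (your Minkowski step and the paper's Young-on-$\R^N$ step yield the identical intermediate inequality), followed by Young's inequality in time, and the convergence is the standard three-$\varepsilon$ mollification argument resting on the density of $C_\Comp(\R^{N+1})$ in $L^q(\R,L^r(\R^N))$. The only (immaterial) difference is that you establish this density via simple functions and tensor products, whereas the paper uses a truncation argument combined with the compactness of the range of a function in $C_\Comp(\R,L^r(\R^N))$.
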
 

\begin{proof} 
We denote by  $\star _x $ the convolution on $\R^N $. We first prove that, given any $f\in L^1 (\R^{N+1} ) $ and $g\in  L^q(\R, L^r (\R^N ) )$,
\begin{equation} \label{fApprx2} 
 \| f \star g \| _{ L^q(\R, L^r) } \le  \|f\| _{ L^1 (\R^{N+1} ) }  \| g \| _{ L^q(\R, L^r (\R^N )) }. 
\end{equation} 
 Indeed,  
\begin{equation*} 
\begin{split} 
[f\star g ] (t,x ) & = \int _\R   \int  _{ \R^N  } f(t-s, x-y) g(s,y) \,dy \,ds \\
& = \int  _{ \R } [f (t-s, \cdot ) \star_x g(s,\cdot ) ] (x) \,ds.
\end{split} 
\end{equation*} 
Therefore, by Young's inequality for the convolution on $\R^N $,
\begin{equation*} 
 \| [f\star g] (t, \cdot ) \| _{ L^r (\R^N ) }  \le   \int  _{ \R } \| f ( t-s, \cdot )\| _{ L^1(\R^N ) }  \| g (s, \cdot )\| _{ L^r(\R^N ) } \,ds .
\end{equation*} 
We now apply Young's inequality for the convolution is time, and we deduce that
\begin{equation*} 
 \| f\star g \| _{ L^q (\R), L^r(\R^N )) } \le  \| f \|_{ L^1 (\R, L^1(\R^N )) }  \| u \| _{ L^q (\R), L^r(\R^N )) }.
\end{equation*} 
Inequality~\eqref{fApprx2} follows, since $ \| f \|_{ L^1 (\R, L^1(\R^N )) } =  \| f\| _{ L^1 (\R^{N+1}) }$.
Estimate~\eqref{fApprx1} is an immediate consequence of~\eqref{fApprx2}, since $\| \rho _n\| _{ L^1 (\R^{N+1}) }= \| \rho \| _{ L^1 (\R^{N+1}) }=1$. 
The convergence property follows from~\eqref{fApprx1} and a standard density argument, see e.g. the proof of Theorem~4.22 in~\cite{Brezis}. 
Note that this argument uses the density of $C_\Comp (\R^{N+1})$ in $L^q(\R, L^r (\R^N ) )$. 
One can show this as follows. By the classical truncation argument, $C_\Comp (\R, L^r (\R^N ) )$ is dense in $L^q(\R, L^r (\R^N ) )$. Then, given a function $u\in C_\Comp (\R, L^r (\R^N ) )$, the set $\union  _{ t\in \R } \{ u(t) \}$ is a compact subset of $L^r (\R^N ) $. Therefore, by the standard truncation and convolution argument (in $\R^N $), $u$ can be approximated in $L^\infty (\R, L^r (\R^N ) )$ by functions of $C_\Comp (\R^{N+1})$.
\end{proof}

\begin{rem} \label{eApprx2} 
Note that the proof of~\eqref{fApprx2} shows the more general inequality
\begin{equation*} 
 \| f \star g\| _{ L^q (\R, L^r (\R^N ) ) } \le  \| f \| _{ L^{q_1} (\R, L^{r_1} (\R^N ) ) }  \| g\| _{ L^{q_2} (\R, L^{r_2} (\R^N ) ) },
\end{equation*} 
where $1\le q,q_1, q_2, r, r_1, r_2\le \infty $ satisfy $\frac {1} {q}= \frac {1} {q_1}+ \frac {1} {q_2} -1$ and $\frac {1} {r}= \frac {1} {r_1} + \frac {1} {r_2}-1$. 
\end{rem} 

\begin{proof} [Proof of Lemma~$\ref{eElem2v2}$]
For a smooth function $u$, identity~\eqref{eElem2v2:1} follows from straightforward calculations. 
For $u$ as in the statement of Lemma~\ref{eElem2v2}, we extend $u$ and $u_t$ to $\R \times \R^N $ by 
setting
\begin{equation*} 
 \widetilde{u}= 
 \begin{cases} 
 u &  \text{on } (0,T)\times \R^N ,\\ 0 &  \text{elsewhere},
 \end{cases}  
\quad 
 \widetilde{v}= 
 \begin{cases} 
 u_t &  \text{on } (0,T)\times \R^N ,\\ 0 &  \text{elsewhere},
 \end{cases}  
\end{equation*} 
and we consider the sequence $(\rho _n) _{ n\ge 1 }$ given by Lemma~\ref{eApprx1}. 
We set $ \widetilde{u} _n = \rho _n \star  \widetilde{u} $, $ \widetilde{v} _n = \rho _n \star  \widetilde{v} $ and we note that $\rho \in C^\infty _\Comp (\R^{N+1})$, so that $ \widetilde{u} _n, \widetilde{v} _n \in C^\infty (\R^{N+1})$. 
We now fix $0<\varepsilon <\frac {1} {2}$ and we set $K_\varepsilon = (\varepsilon ,1-\varepsilon ) \times \R^N $. We note that for $n\ge n_0$ with $n_0$ sufficiently large, the convolutions giving $ \widetilde{u}_n(x) $ and $ \widetilde{v}_n(x) $ for $x\in K_\varepsilon $ only see the values of $u$ and $u_t$ in $(0,T)\times \R^N $. Thus we see that $\partial _t  \widetilde{u}_n =  \widetilde{v}_n  $ in $K_\varepsilon $ for $n\ge n_0$. Applying formula~\eqref{eElem2v2:1} to $ \widetilde{u} $, we deduce that
\begin{equation}  \label{fApprx3} 
 \partial _t ( | \widetilde{u}_n |^a  \widetilde{u}_n )= \frac {a +2} {2}  | \widetilde{u}_n|^a  \widetilde{v}_n + \frac {a} {2}  | \widetilde{u}_n|^{a -2} \widetilde{u}_n^2   \overline{ \widetilde{v}_n} 
\end{equation} 
in $K_\varepsilon $. We now define $q,r\ge 1$ by
$\frac {a} {q_1}+ \frac {1} {q_2}= \frac {1} {q}$ and $\frac {a} {r_1}+ \frac {1} {r_2}= \frac {1} {r}$.
Applying Lemma~\ref{eApprx1} to both $ \widetilde{u} $ and $ \widetilde{v} $, then H\"older's inequality in space and time, we deduce that $ | \widetilde{u}_n |^a  \widetilde{u}_n \to  |u|^a u$ in 
$L^{\frac {q_1} {a+1}} ((\varepsilon ,T-\varepsilon ), L^{\frac {r_1} {a+1}} (\R^N ) )$
 and $ \frac {a +2} {2}  | \widetilde{u}_n|^a  \widetilde{v}_n + \frac {a} {2}  | \widetilde{u}_n|^{a -2} \widetilde{u}_n^2   \overline{ \widetilde{v}_n} \to  \frac {a +2} {2}  |u|^a  u_t + \frac {a} {2}  | u|^{a -2}u^2   \overline{ u}_t $ in $L^q ((\varepsilon ,T-\varepsilon ), L^r (\R^N ) )$, as $n\to \infty $. 
By possibly extracting a subsequence, we may assume that convergence also holds a.e. in $K_\varepsilon $. Letting $n\to \infty $ in~\eqref{fApprx3} we deduce that~\eqref{eElem2v2:1} holds a.e. in $K_\varepsilon $. Since $0<\varepsilon <\frac {1} {2}$ is arbitrary, we conclude that~\eqref{eElem2v2:1} holds a.e. in $(0,T) \times \R^N $. 
\end{proof}

\end{document}